\documentclass[11pt,a4paper]{article}
\usepackage[margin=26mm]{geometry}
\usepackage[T1]{fontenc}
\usepackage{lmodern}
\usepackage{amsmath,amssymb,amsthm,mathtools}
\usepackage{booktabs,array,microtype}
\usepackage[hidelinks]{hyperref}
\usepackage{verbatim}
\hypersetup{pdftitle={Ordinary lines, point degrees, and matroid density},pdfauthor={Matthew E. Kroeker and Ben Lund}}
\theoremstyle{plain}
\newtheorem{theorem}{Theorem}[section]
\newtheorem{lemma}[theorem]{Lemma}
\newtheorem{corollary}[theorem]{Corollary}
\newtheorem{proposition}[theorem]{Proposition}
\newtheorem{claim}{Claim}[theorem]
\theoremstyle{remark}

\DeclareMathOperator{\rank}{rank}
\DeclareMathOperator{\tr}{tr}
\DeclareMathOperator{\spanop}{span}

\newcommand{\C}{\mathbb C}
\newcommand{\R}{\mathbb R}
\newcommand{\Z}{\mathbb Z}

\newcommand{\cL}{\mathcal L}
\newcommand{\cB}{\mathcal B}

\newcommand{\del}{{\setminus}}
\allowdisplaybreaks[1]
\title{Ordinary lines, point degrees, and matroid density}
\author{Matthew E. Kroeker\thanks{Institute of Discrete Mathematics and Algebra,
TU Freiberg, 09599 Freiberg, Germany. Funded by the Deutsche Forschungsgemeinschaft (DFG - German Research Foundation), project number 546892829.
Email: \href{mailto:Matthew-Eliot.Kroeker@math.tu-freiberg.de}{\texttt{Matthew-Eliot.Kroeker@math.tu-freiberg.de}}.}
\and Ben Lund\thanks{Institute of Mathematics and Interdisciplinary Sciences,
Xidian University, Xi'an, China.
Email: \href{mailto:lund.ben@gmail.com}{\texttt{lund.ben@gmail.com}}.}}
\date{}

\begin{document}
\maketitle
\begin{abstract}
For every set of $n$ points in $\mathbb{C}^d$ with full affine span, we prove
that some point lies on at least $n+1-2n/(d+1)$ spanned lines, which is best possible in every dimension.
By successive contraction, this implies the sharp bound
$r+(\ell-3)\binom r2$ on the number of elements of a simple
rank-$r$ complex-representable matroid excluding $U_{2,\ell}$ as
a minor, which removes the sufficiently-large-rank
assumption from the bound of Geelen, Nelson, and Walsh.
An arithmetic application shows that rank-$r$ integer matrices
whose maximal minors have absolute value at most $\Delta$
have at most $O(\Delta r^2)$ distinct columns, resolving a question
of Averkov and Schymura.

We prove that a set of $n$ points in $\mathbb C^3$ determines at
least $n^2/40$ ordinary lines whenever no plane contains more than
$2n/3$ of the points. This answers a question of Basit, Dvir,
Saraf, and Wolf.
We also prove upper bounds on average line length and lower bounds on the number of points incident to many spanned lines in high-dimensional point sets.

The proofs combine contraction of concentrated flats with
normalization of the point vectors and second- and fourth-moment estimates on
their pairwise correlations. This approach builds on the design
matrix method.
\end{abstract}

\section{Introduction}\label{sec:introduction}

Incidence geometry studies how points, lines, and higher-dimensional
objects can meet, and how these incidences constrain their arrangement.
A classical starting point is the Sylvester--Gallai theorem, posed as
a problem by Sylvester in 1893: every finite noncollinear set of points
in the real plane determines a line containing exactly two of its
points. Such a line is called \emph{ordinary}. Over the complex numbers,
noncollinear configurations without ordinary lines exist, but Kelly's
theorem~\cite{Kelly} shows that every finite complex point set not
contained in a two-dimensional affine plane determines an ordinary line.

The design-matrix method introduced by Barak, Dvir, Wigderson, and
Yehudayoff~\cite{BDWY} brought a linear-algebraic approach to these
questions. It encodes local dependencies among points in a sparse
matrix whose pattern of nonzero entries controls its rank. Rank bounds
then turn abundant local incidences into restrictions on the dimension
of the point set. This method yielded quantitative versions of the
Sylvester--Gallai theorem, and the improved bounds of Dvir, Saraf, and
Wigderson~\cite{DSWdesign} gave an elementary proof of Kelly's theorem.
Its applications also include rigidity under collinearity constraints,
incidences among subspaces and curves~\cite{DGOS}, and lower bounds
for locally correctable codes~\cite{BDWY,DSWLCC}.

Building on this approach, we apply the normalization directly to the
point vectors and estimate their correlations. We obtain bounds on
ordinary lines and point degrees in complex space, together with
applications to matroid density and matrix column counts. Our main
geometric results answer the question of Basit, Dvir, Saraf, and
Wolf~\cite{BDSW} about quadratic ordinary-line bounds under
nonconcentration in two-dimensional planes, and give an exactly sharp
degree bound in every affine dimension. The latter leads to a sharp
excluded-line-minor bound in every rank and a column bound answering
a question of Averkov and Schymura~\cite{AS}.

\subsection{Results in incidence geometry}

We study the number of ordinary lines, the number of spanned lines
through a point, and the average number of points on a spanned line.
We discuss the first two here and defer the average-line-length
results to Section~\ref{sec:common}, in particular
Subsection~\ref{subsec:nonconcentration}.
For a finite set $P\subset\C^d$, write $t_2(P)$ for its number of
ordinary lines and
\[
M_j(P)\coloneq\max_{\dim F\le j}|P\cap F|,
\]
where the maximum is over affine flats.

\paragraph{Ordinary lines.}
The classical quantitative form of the Sylvester--Gallai problem asks
how many ordinary lines a noncollinear real point set must determine.
The Dirac--Motzkin conjecture predicted at least $n/2$ ordinary lines
for sufficiently large sets of $n$ points in the real plane.
Green and Tao~\cite{GreenTao} proved this conjecture, with equality
possible for arbitrarily large even $n$.
Over $\C$, additional hypotheses are necessary even to guarantee
one ordinary line. The following bound uses nonconcentration in
flats of every dimension.

\begin{corollary}[Ordinary lines under nonconcentration]
\label{cor:introordinary}
Let $P\subset\C^d$ consist of $n$ distinct points, where $d\ge3$.
Suppose that
\[
M_j(P)\le\frac{j+1}{d+1}n
\qquad(0\le j<d).
\]
Then
\[
t_2(P)\ge\frac{d-3}{2(d+1)}n^2+\frac32n.
\]
\end{corollary}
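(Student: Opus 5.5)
Since this is stated as a corollary of the paper's main geometric machinery, I will derive it from two ingredients: a lower bound on the number of spanned lines through points (the degree bound), and a counting identity that converts degree information into ordinary lines. Write $t_k$ for the number of spanned lines with exactly $k$ points. The identities $\sum_k \binom k2 t_k=\binom n2$ and $\sum_{p}\deg(p)=\sum_k k\,t_k$ give, for the total number of lines $L=\sum_k t_k$,
\[
\binom n2=\sum_k\binom k2t_k \ge t_2+3(L-t_2)\quad\text{(since $\binom k2\ge 3(k-1)\cdot\tfrac12\cdot$\ldots)},
\]
and more usefully $\binom k2 -\tfrac{3}{2}(k-1)\cdot\ldots$. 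Concretely, I will use the inequality $\binom k2\ge \tfrac32 k-\tfrac32\cdot[\text{something}]$ chosen so that equality holds at $k=3$ and it is loose only at $k=2$. The linear function $f(k)=\tfrac{3}{2}(k-1)\cdot$ with $f(2)=\tfrac32$, $f(3)=3$ satisfies $\binom k2\ge\tfrac32(k-1)\cdot\tfrac{k}{k}$ for $k\ge3$ (check: $k(k-1)/2\ge 3(k-1)/2$ iff $k\ge3$), while at $k=2$ we have $1=\tfrac32-\tfrac12$. Hence
\[
\binom n2\ge \tfrac32\sum_k (k-1)t_k-\tfrac12 t_2 .
\]

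Next I need a lower bound on $\sum_k(k-1)t_k=\sum_p \deg(p)-L$, or alternatively on $\sum_p(\deg p)$. The natural quantity is a lower bound on the average (not just maximum) point degree: I expect the paper's main theorem under the nonconcentration hypothesis $M_j(P)\le\frac{j+1}{d+1}n$ to give that \emph{every} point, or on average the points, lie on at least $n+1-\frac{2n}{d+1}$ lines, i.e.\ $\sum_{p}\deg p\ge n\bigl(n+1-\tfrac{2n}{d+1}\bigr)$. The hypothesis on all $M_j$ is exactly what prevents the contraction step from finding a concentrated flat, so the degree bound applies to each point (by applying the full-span degree theorem after projecting from $p$, or directly via the normalized correlation estimate summed over points). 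Combining with $\sum_k t_k\binom k2=\binom n2$ I then eliminate $L$: using $\sum_p\deg p=\sum_k k t_k$ and $\binom k2\ge 2k-3$ for $k\ge2$ with equality at $k=2,3$ is not enough; instead I will use the quadratic-in-$k$ comparison $\binom k2 \ge \alpha k - \beta$ tuned to $k=3,4$, giving $\binom n2\ge \tfrac32\sum k t_k - 3L+ t_2$, and separately $L\le$ something. Cleaner: the target $t_2\ge\frac{d-3}{2(d+1)}n^2+\frac32 n$ equals $\tfrac12\sum_p\deg p-\binom n2\cdot\ldots$; indeed $\tfrac32\cdot n(n+1-\tfrac{2n}{d+1})-3\binom n2=\frac{3(d-3)}{2(d+1)}n^2\cdot\tfrac13\ldots$, so I will use $\sum_k(3k-2\binom k2\cdot\tfrac{?}{})$; the precise weights come from $3k-\binom{k}{2}\cdot 2\le 0$ for $k\ge3$... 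I will fix: $\sum_k (k- \tfrac{2}{3}\binom k2)t_k \le \tfrac43 t_2$ because $k-\tfrac23\binom k2\le 0$ for $k\ge4$, $=1$ at $k=3$ — which fails, so $t_3$ must also be controlled. The honest identity is $t_2\ge \sum_p \deg p - 2\binom n2\cdot\frac{?}{}$, obtained from $k-\tfrac{k(k-1)}{2}\cdot c$.

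The main obstacle is precisely this bookkeeping: the per-point degree bound must be strong enough that $\sum_p(\deg p -1)$ exceeds $\tfrac{2}{3}\cdot2\binom n2$-type totals, and I expect the paper's proof to use the stronger per-point form $\sum_{\ell\ni p}(|\ell|-1)^{-1}$-weighted or the Hirzebruch-type inequality $t_2+\tfrac34t_3\ge n+\sum_{k\ge5}(2k-9)t_k$ over $\C$ in combination. My first attempt would be: per-point bound $\deg p\ge n+1-\tfrac{2n}{d+1}$, sum over $p$ to get $\sum k t_k$, combine with $\sum\binom k2 t_k=\binom n2$ and Hirzebruch's inequality to eliminate $t_3$ and large $k$, and verify the constant $\tfrac32 n$ from Hirzebruch's $+n$ term.
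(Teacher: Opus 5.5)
There is a genuine gap: neither of the two ingredients you combine is actually established. First, everything rests on the averaged degree bound $\sum_{p\in P}\mathrm{d}_P(p)=\sum_{L\in\cL(P)}s_L\ge\frac{d-1}{d+1}n^2+n$, which you only expect to hold, and the mechanisms you suggest do not give it.

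\begin{itemize}
\item Theorem~\ref{thm:introdegree} produces a single point of large degree, so there is nothing to sum.
\item Projecting from $p$ gives a configuration with $\mathrm{d}_P(p)$ points. The degree theorem applied to it controls degrees inside that configuration, not its size.
\item The per-point version is false under this hypothesis. In $\C^3$, take four lines through a point $p$, no three coplanar, each carrying $n/4$ points, together with three generic points. This satisfies $M_j(P)\le\frac{j+1}{4}n$ for $4\mid n$, $n\ge16$, yet $\mathrm{d}_P(p)=7<\frac n2+1$.
\end{itemize}

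Only the summed statement is true, and it needs the correlation argument with no contraction. For the lifts $(p,1)\in\C^{d+1}$, the hypothesis gives \eqref{eq:density} with $T=n/(d+1)$, so Lemma~\ref{lem:frame} gives $S\preceq(T+\eta)I_{d+1}$. Lemma~\ref{lem:moments}, applied to the linear hypergraph of all spanned lines (each edge of rank two), then gives $\sum_L s_L(s_L-2)\le2(T-1)n=\frac{2n(n-d-1)}{d+1}$ after $\eta\downarrow0$. Via $\sum_L s_L(s_L-1)=n(n-1)$, this is equivalent to the incidence bound above.

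Second, you never settle the weights in the conversion, and Hirzebruch's inequality is not needed. Take $c=1$ in your own ansatz. Then $k-\binom k2=\frac{k(3-k)}2$ equals $1$ for $k=2$ and is nonpositive for $k\ge3$, so $\sum_L s_L-\binom n2=\sum_k\bigl(k-\binom k2\bigr)t_k\le t_2$. Substituting the incidence bound gives exactly $t_2\ge\frac{d-1}{d+1}n^2+n-\binom n2=\frac{d-3}{2(d+1)}n^2+\frac32n$. The term $\frac32n$ is the $n$ from the incidence bound plus the $\frac n2$ from $-\binom n2$, not a Hirzebruch term. With these two repairs your outline becomes the paper's proof, which writes the last step as $\binom s2\le s(s-2)$ for $s\ge3$.
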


Corollary~\ref{cor:introordinary} follows from an incidence inequality
of de Borbon and Panov~\cite[Corollary~4.1]{deBorbonPanov}, which
also follows from our method. Its linear conclusion in dimension
three and quadratic conclusion in dimension four parallel results
of Basit, Dvir, Saraf, and Wolf~\cite{BDSW}, who proved a lower bound
of $3n/2$ in $\C^3$ when no plane contains more than $n/2$ points,
and a quadratic bound in $\C^4$ under nonconcentration in
three-dimensional flats, for sufficiently large $n$.

Equality in Corollary~\ref{cor:introordinary} is attained by the
cyclic Dowling configuration $Q_{d+1}(1)$ and the jointless cyclic
Dowling configurations $Q'_{d+1}(2)$ and $Q'_{d+1}(3)$, defined in
Section~\ref{sec:constructions}. Further equality examples arise
from exceptional reflection arrangements. For every fixed $d\ge3$,
the configurations $Q'_{d+1}(t)$, with $t\ge3$, give arbitrarily
large examples satisfying the nonconcentration hypothesis and
\[
t_2(P)=\left(\frac{d-3}{2(d+1)}
             +\frac1{d(d+1)}\right)n^2.
\]
Thus the coefficient of $n^2$ in the corollary is within
$1/(d(d+1))$ of the best possible asymptotic coefficient.

The quadratic term in Corollary~\ref{cor:introordinary} vanishes
when $d=3$. Basit et al.\ \cite{BDSW} asked whether a quadratic
lower bound nevertheless holds in $\C^3$ when no two-dimensional
plane contains more than half of the points. De Zeeuw~\cite{deZeeuw}
proved the corresponding real result whenever no plane contains
more than $\alpha n$ points, for any fixed $\alpha<1$.
His argument does not extend directly to complex space because it
uses the Sylvester--Gallai theorem over $\R$, whose planar conclusion
fails over $\C$. We prove the following complex version.

\begin{theorem}[Ordinary lines under plane nonconcentration]
\label{thm:introordinary}
Let $P\subset\C^3$ consist of $n\ge1$ distinct points, and suppose that
every two-dimensional affine plane contains at most $\alpha n$
of them, where $0\le\alpha<1$. Then
\[
t_2(P)\ge
\min\left\{\frac1{40},\frac{1-\alpha}{2}\right\}n^2.
\]
\end{theorem}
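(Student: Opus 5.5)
We split according to how concentrated $P$ is on a single line, and in the main case we use the normalization and second-moment method that the introduction describes. Let $m=M_1(P)$ be the largest number of points of $P$ on one line $L$.

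\textbf{Step 1: a rich line.} Suppose $m\ge cn$ for a small absolute constant $c$. Each point $q\in P\setminus L$ spans, together with $L$, a plane $\Pi_q$. Group the points off $L$ according to these planes. By hypothesis each plane through $L$ contains at most $\alpha n$ points, so at least $(1-\alpha)n$ points lie off any single such plane.

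Now fix a point $q$ off $L$. The lines joining $q$ to the points of $L$ are $m$ distinct lines. Each point $q'\ne q$ lies on at most one of them, so at most $n-m-1$ of these lines contain a third point, all of it off $L$. Hence $q$ lies on at least $2m-n+1$ ordinary lines meeting $L$. This only helps when $m>n/2$. For smaller $m$ we instead count pairs $(q,x)$ with $x\in L$ such that the line $qx$ contains a point outside $\Pi_q$. That is impossible, because the whole line $qx$ lies in $\Pi_q$.

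So extra points on the line $qx$ come only from $\Pi_q$, and the count reduces to a planar problem inside each plane $\Pi$ through $L$. There, $L$ carries $m$ points and $k_\Pi$ further points lie off $L$, and we need $\gtrsim m k_\Pi$ ordinary lines between $L$ and $\Pi\setminus L$ whenever $m\ge 2k_\Pi$. This is not automatic. When $m$ is comparable to $k_\Pi$, dual/Hesse-type configurations can cause trouble, and this is exactly where the threshold $1/40$ should come from.

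\textbf{Step 2: no rich line.} Suppose $m<cn$. Pass to homogeneous vectors $v_p\in\C^4$ and apply the normalization step, as in the design-matrix method (forward reference, Section~\ref{sec:common}). This gives a linear map $A$ such that the vectors $u_p=Av_p/\|Av_p\|$ satisfy $\sum_p u_pu_p^*=\frac n4 I$. Such a normalization exists in radial isotropic position precisely when the points are not too concentrated on subspaces. The required condition, $|P\cap V|<\frac{\dim V}{4}n$, fails only if a plane holds more than $3n/4$ points, which our hypothesis rules out when $\alpha\le 3/4$, or if a line holds more than $n/2$ points, which is Step 1. In the remaining regime $\alpha>3/4$ we would first contract or restrict to a concentrated flat, which is the ``contraction of concentrated flats'' the paper mentions.

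Taking the trace of the square of this identity gives
\[
\sum_{p,q}|\langle u_p,u_q\rangle|^2=\frac{n^2}{4}.
\]
Every non-ordinary line spanned by $P$ lies in a 2-dimensional subspace of $\C^4$ containing $k\ge3$ unit vectors, and those vectors carry large pairwise correlation mass. If $t_2$ were small, the at least $\binom n2-t_2$ non-ordinary pairs would force the left side to exceed $n^2/4$ by an amount that grows with $\binom n2-t_2$. The design-matrix rank bound, as used for Kelly's theorem, makes this quantitative. Writing $\sum_\ell\binom{k_\ell}2=\binom n2$, it should yield $t_2\ge \frac{n^2}{40}$ unless most pairs sit on lines with many points, which is excluded here because $m<cn$.

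\textbf{Step 3: combine.} The $\frac{1-\alpha}2n^2$ term should arise when $\alpha$ is close to $1$. In that case we take the plane $\Pi$ holding $\alpha n$ points and the $(1-\alpha)n$ points off it. Each such point $q$ lies on at least about $n/2$ ordinary lines to $\Pi$. The reason is that the lines from $q$ to the points of $\Pi$ are distinct, and a line through $q$ meets $\Pi$ in only one point, so each line contains at most one point of $\Pi$. Such a line fails to be ordinary only if it contains another point outside $\Pi$, and there are only $(1-\alpha)n$ of those. Summing over the points off $\Pi$ gives the bound.

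\textbf{Main obstacle.} I expect the main obstacle to be Step 2's moment estimate, namely turning the excess correlation mass into the explicit constant $1/40$. I would first try to obtain it via a fourth-moment bound on $\sum|\langle u_p,u_q\rangle|^4$ over collinear triples.
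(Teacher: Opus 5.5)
\textbf{There is a genuine gap.} Your Step~2 carries the whole quantitative burden, and it fails as described.

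In homogeneous rank $4$ the second-moment comparison has no quadratic slack. Isotropy gives $\sum_{p\ne q}|\langle u_p,u_q\rangle|^2=n^2/4-n$, about $1/4$ per ordered pair. The rank-two trace bound $\sum_{i\ne j\in I}|\langle u_i,u_j\rangle|^2\ge |I|^2/2-|I|$ forces an average of only $(k-2)/(2(k-1))$ per ordered pair on a $k$-point line. That is exactly $1/4$ when $k=3$. Summing yields $\bigl(n(n-1)-2t_2\bigr)/4\le n^2/4-n$, i.e.\ only $t_2\ge 3n/2$, which is the case $d=3$ of Corollary~\ref{cor:introordinary}. So the comparison you describe cannot produce a quadratic bound.

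Your heuristic that the danger lies in pairs on long lines is also reversed. Lines with $k\ge4$ points have per-pair average strictly above $1/4$ and help; three-point lines are the obstruction.

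The missing idea is a fourth-moment comparison with two ingredients.
\begin{enumerate}
\item Globally, the centered projectors $u_iu_i^*-I_4/4$ lie in the $15$-dimensional real space of traceless Hermitian $4\times4$ matrices. Cauchy--Schwarz on the eigenvalues of their Gram matrix gives $Y=\sum_{i\ne j}\bigl(|\langle u_i,u_j\rangle|^2-\frac14\bigr)^2\ge\frac{3n^2}{80}-\frac{9n}{16}$.
\item Locally, three vectors spanning a $2$-dimensional space have vanishing Gram determinant, which gives $(1-s)^2\le4xyz$ for their squared correlations. From this, $Y_I\le\frac{11}{4}X_I$ on every three-point line. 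Each ordinary line has excess at most $\frac32$, and each longer line earns a bonus $\frac38|I|(|I|-4)$.
\end{enumerate}
Summing over lines and using $X\le-\frac{3n}{4}$ gives $t_2\ge n^2/40+n$. The constant is $\frac23\cdot\frac{(3/4)^2}{15}$; it does not come from Hesse-type planar configurations, as you guessed. Your closing remark points toward fourth moments, but neither inequality appears in the proposal, and together they are the argument.

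The case analysis also has a hole. For lines with at most $n/2$ points no separate argument is needed. The normalization with $T=n/4$ only requires lines with at most $n/2$ points and planes with at most $3n/4$. Use the approximate form $S\preceq(T+\eta)I$ and let $\eta\downarrow0$, since exact radial isotropy needs strict inequalities. Your Step~1 does not close this range anyway. Its planar subproblem with $m\ge2k_\Pi$ is automatic by the same crossing count, and nothing treats planes through $L$ that carry many off-line points.

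The real gap is a line with slightly more than $n/2$ points. There the crossing count $(n-m)(2m-n+1)$ is only linear, and isotropic normalization is unavailable, so neither of your steps applies. The paper handles this in two moves:
\begin{enumerate}
\item It first uses the crossing count, as in your Step~3, to dispose of every plane with more than $7n/12$ points. On $(7n/12,3n/4]$ this gives at least $5n^2/72$.
\item It then treats $n/2<\ell\le7n/12$ by normalizing with $T=\ell/2$. This worsens the bound on $X$ by $\frac12(\ell/n-\frac12)n^2$. The loss is recovered from the bonus $\frac38\ell(\ell-4)$ that the local fourth-moment inequality assigns to the heavy line itself.
\end{enumerate}
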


In particular, $t_2(P)\ge n^2/40$ if no plane contains more than
$2n/3$ points. De Zeeuw~\cite[Section~4.1]{deZeeuw} notes, without
giving the proof, that a refinement of his real argument gives
$n^2/288$ under the stronger assumption that no plane contains
more than $n/6$ points. The constant in our theorem is not known
to be sharp: even when no plane contains more than half of the
points, the examples above leave the optimal asymptotic coefficient
between $1/40$ and $1/12$.

\paragraph{Line degrees.}
Write $\mathrm{d}_P(p)$ for the number of distinct spanned lines through
$p\in P$, and $o_P(p)$ for the number of ordinary lines through $p$.
The weak Dirac theorem, proved independently by Beck~\cite{Beck}
and Szemer\'edi and Trotter~\cite{SzemerediTrotter}, states that
every noncollinear set of $n$ points in the real plane has a point
incident to at least $cn$ spanned lines, for an absolute constant
$c>0$. The strong Dirac conjecture asks for a bound of
$n/2-O(1)$ in the real plane; see~\cite{Han}.
For a noncollinear set of $n$ points in $\C^2$, Langer's inequality
gives the sharp bound $(n+3)/3$ on the maximum degree
\cite{Langer,Han,deZeeuwLanger}.
Our next theorem extends this degree bound to every affine dimension
and also bounds the number of ordinary lines through the point it
produces.

\begin{theorem}[Degree bounds]\label{thm:introdegree}
Let $P\subset\C^d$ consist of $n$ points of affine dimension $d\ge 2$.
There exists $p\in P$ such that
\[
\mathrm{d}_P(p)\ge n+1-\frac{2n}{d+1},
\qquad
o_P(p)\ge n+3-\frac{4n}{d+1}.
\]
\end{theorem}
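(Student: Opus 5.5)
Both bounds will come from a normalized Gram-matrix argument applied to the point vectors. Lift each $p\in P$ to $v_p=(p,1)\in\C^{d+1}$. The lifted vectors span $\C^{d+1}$ because $P$ has full affine span. First apply a linear change of coordinates $A\in GL_{d+1}(\C)$ and positive rescalings $u_p=c_pAv_p$ that put the vectors in radial isotropic position:
\[
\|u_p\|=1,\qquad \sum_{p}u_pu_p^*=\frac{n}{d+1}I_{d+1}.
\]
Neither step changes which triples are collinear. This normalization exists by Barthe/Forster-type results exactly when no proper linear subspace contains more than its proportional share $\frac{\dim}{d+1}n$ of the vectors. If some flat $F$ of affine dimension $j$ is overconcentrated, I would instead contract it: project from $F$ and induct on dimension, using that degrees through a point of $F$ only go up after accounting for lines inside $F$ plus the projected configuration. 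The inequality $n+1-2n/(d+1)$ interpolates well under such splits. This contraction of concentrated flats is the step I expect to be the main obstacle, since one must check that the bound for the flat of dimension $j$ containing $m>\frac{j+1}{d+1}n$ points and the bound for the quotient combine to at least the target.

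In isotropic position, the trace gives the second-moment identity
\[
\sum_{p,q}|\langle u_p,u_q\rangle|^2=\tr\Bigl(\bigl(\tfrac{n}{d+1}I\bigr)^2\Bigr)=\frac{n^2}{d+1}.
\]
Hence the average over $p$ of $\sum_{q\ne p}|\langle u_p,u_q\rangle|^2$ equals $\frac{n}{d+1}-1$.

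Next, relate this to degrees. Fix $p$. The lines through $p$ partition $P\setminus\{p\}$ into classes $L_1,\dots,L_k$ with $k=\mathrm{d}_P(p)$. For a line with $m_i=|L_i|$ further points, all the $u_q$ with $q\in L_i$, together with $u_p$, lie in a 2-dimensional subspace $W_i$. Let $\pi_i$ be the projection onto $W_i$. Then $\sum_{q\in L_i}|\langle u_p,u_q\rangle|^2\le \langle u_p, S_i u_p\rangle$, where $S_i=\sum_{q\in L_i}u_qu_q^*$ is a positive operator on $W_i$ with trace $m_i$. Its value at the unit vector $u_p\in W_i$ satisfies
\[
\langle u_p,S_iu_p\rangle\ge \tr S_i-\lambda_{\max}'(S_i)\cdots,
\]
and more usefully $\tr(\pi_i S_i)-\langle u_p,S_iu_p\rangle=\langle w,S_iw\rangle$ for the unit vector $w\perp u_p$ in $W_i$. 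I would aim for the per-line inequality
\[
\sum_{q\in L_i}|\langle u_p,u_q\rangle|^2\ge m_i-1,
\]
or a version averaged over $p\in L_i$ that holds after summing over all points of each line. Such a version follows by applying the same second-moment identity within the rank-2 restriction $\sum_{q\in\ell}u_qu_q^*$ on each line $\ell$: the $|\ell|$ unit vectors in $\C^2$ satisfy $\sum_{q,q'\in\ell}|\langle u_q,u_{q'}\rangle|^2\ge|\ell|^2/2$. Summing $m_i-1$ over lines gives $\sum_i(m_i-1)=n-1-\mathrm{d}_P(p)$.

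Averaging then gives $\frac{1}{n}\sum_p\bigl(n-1-\mathrm{d}_P(p)\bigr)\le \frac{n}{d+1}-1$ up to the precise line-wise constant. With the correct constant of $2$, this yields the average bound $\mathrm{d}_P(p)\ge n+1-\frac{2n}{d+1}$, and some point attains at least the average. For ordinary lines, split the count as $n-1=o_P(p)+\sum_{m_i\ge2}m_i$ and use the sharper per-line loss for lines with $m_i\ge2$. A line of $m+1$ points contributes correlation at least $(m+1)^2/2-(m+1)$ in total, which is at least about $m-1$ per point. Combining this with the degree inequality gives $o_P(p)\ge 2\,\mathrm{d}_P(p)-(n-1)$ on average, which is exactly $n+3-\frac{4n}{d+1}$. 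Choosing one $p$ that satisfies both inequalities simultaneously requires applying the averaging to the combined quantity $\mathrm{d}_P(p)$ and $o_P(p)$, which I would check. A Cauchy–Schwarz or fourth-moment refinement may be needed to get exact constants.
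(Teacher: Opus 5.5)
\textbf{Balanced case.} This part is correct and is the paper's argument, with two repairs.

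First, the per-point bound $\sum_{q\in L_i}|\langle u_p,u_q\rangle|^2\ge m_i-1$ is false: take $u_p\perp u_{q_1}$ and $u_{q_2}$ close to $u_{q_1}$. Only the whole-line bound $\sum_{q\ne q'\in L}|\langle u_q,u_{q'}\rangle|^2\ge s_L(s_L-2)/2$ holds. Combined with $\sum_p\delta_P(p)=\sum_L s_L(s_L-2)$, it is exactly what produces the factor $2$.

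Second, the ordinary-line step needs no joint averaging. The inequality $n-1-o_P(p)=\sum_{L\ni p,\,s_L\ge3}(s_L-1)\le2\delta_P(p)$ holds pointwise, so the same $p$ gives both bounds.

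Also, exact radial isotropy can fail when some subspace attains equality, but $S\preceq(\frac{n}{d+1}+\eta)I$ suffices for your argument.

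\textbf{The gap: the concentrated case.} You leave this as ``project from $F$ and induct'', and as sketched it does not work.

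Induction on $P\cap F$ cannot supply the witness. Suppose a $j$-flat holds $m>\frac{j+1}{d+1}n$ points. Raising the inductive bound $m+1-\frac{2m}{j+1}$ to $n+1-\frac{2n}{d+1}$ would need more than $n-m$ lines leaving $F$, which is impossible. Concretely, take the nine points of $Q'_3(3)$ in a plane of $\C^3$ plus one point off it: every planar point has degree $5<6$.

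For an outside point $q$ you do have $\mathrm{d}_P(q)\ge m+\mathrm{d}_{\bar P}(\bar q)$. If projection from $F$ were injective, the inductive bound in the quotient would combine to the target precisely because $m\ge\frac{j+1}{d+1}n$. However, projection identifies all outside points lying in a common $(j+1)$-flat through $F$, and this happens in general. Then three things go wrong:
\begin{itemize}
\item $\bar P$ is a multiset, so the inductive hypothesis (stated for distinct points) does not apply directly.
\item Applied to the distinct projected points, it yields a bound only in terms of $|\bar P|$, which may be far below $n-m$.
\item Lines through $q$ inside $\mathrm{aff}(F\cup\{q\})$ that miss $P\cap F$ become invisible in the quotient.
\end{itemize}

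\textbf{What the paper does instead.} It contracts once rather than inducting.
\begin{itemize}
\item Lemma~\ref{lem:quotient} with $k=d-1$ takes $\tilde F$ to be a minimal-dimensional maximizer of $|\{i:v_i\in U\}|-\frac{n}{d+1}\dim U$. This forces $\dim\tilde F\le d$.
\item The same choice makes the quotient vectors, counted with multiplicity, satisfy the density hypothesis with the same $T=\frac{n}{d+1}$. So Lemma~\ref{lem:frame}, which allows parallel vectors, applies.
\item The second-moment bound is then applied to the \emph{original} lines $L\not\subset F$, restricted to $P\setminus F$. These have rank one or two in the quotient.
\item A line through a point of $P\cap F$ collapses to rank one and contributes $2(s_L-1)(s_L-2)\ge s_L(s_L-2)$.
\end{itemize}
This gives $\sum_{p\in P\setminus F}\delta_P(p)\le\sum_{L\not\subset F}s_L(s_L-2)\le\frac{2(n-d-1)}{d+1}|P\setminus F|$, and the witness is chosen in $P\setminus F$.
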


Dadush, Eisenbrand, Pinchasi, Rothvoss, and Singer\ \cite[Theorem~1.13]{DEPRS} proved the degree bound
$(1-4/(d+1))n$. Theorem~\ref{thm:introdegree} replaces the
coefficient $4$ by the optimal coefficient $2$ and includes the
additive term $1$. The ordinary-degree bound follows from
their design-matrix rank bound and the standard construction from
collinear triples~\cite[Theorem~1.12 and Lemma~6.1]{DEPRS}.

Every irreducible reflection configuration attains the total-degree
bound, including both Dowling families $Q_{d+1}(t)$ and
$Q'_{d+1}(t)$. These families give arbitrarily large equality
examples in every fixed dimension $d\ge2$. For $t=2,3$, every point
of $Q'_{d+1}(t)$ also attains equality in the ordinary-degree
bound. See Section~\ref{sec:constructions}.

We also give a bound on the number of high-degree points outside
a low-dimensional flat. Quantitative Sylvester--Gallai theorems
\cite{BDWY,DSWdesign} relate abundant collinear triples to bounded
dimension. The following result gives an explicit tradeoff between
the dimension of an exceptional flat, the degree threshold, and
the proportion of points attaining it. Write $x_+\coloneq\max\{x,0\}$.

\begin{theorem}[Many high-degree points]\label{thm:intromany}
Let $P\subset\C^d$ consist of $n$ points of affine dimension $d\ge2$.
For any $C\ge1$ and any $k\in\{0,1,\ldots,d-1\}$, there exists
a flat $F$ of dimension at most $k$ such that at least
\[
\left(1-\frac{2d}{C(k+2)}\right)_+|P\setminus F|
\]
points of $P\setminus F$ have degree at least $(1-C/d)n$.
\end{theorem}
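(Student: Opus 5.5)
The plan is to reduce to a statement about the Gram matrix of normalized point vectors and combine it with an iterated contraction of concentrated flats. Lift $P$ to vectors $v_p=(p,1)\in\C^{d+1}$ spanning $\C^{d+1}$. The degree of $p$ is the number of points of the contraction $P/p$, i.e.\ the number of distinct directions $v_q$ modulo $v_p$:
\[
\mathrm{d}_P(p)=n-1-\sum_{L\ni p}\bigl(|L\cap P|-2\bigr).
\]
So $n-1-\mathrm{d}_P(p)$ counts the ``excess'' collinearities at $p$.

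The core estimate I would aim for is the following. If $Q$ is a set of $m$ points (vectors) in rank $r$ that is \emph{nonconcentrated}, meaning no flat of rank $s<r$ contains more than roughly $sm/r$ of them, then the average excess at a point is at most about $Cm/r$ for some constant $C$. I would prove this in two steps.
\begin{enumerate}
\item Put $Q$ in radial isotropic position (Forster/Barthe normalization). This is possible exactly under the nonconcentration hypothesis, and it gives $\sum_q u_qu_q^*=\frac{m}{r}I$ with unit vectors $u_q$.
\item Use second-moment bounds on $\sum_{q\ne q'}|\langle u_q,u_{q'}\rangle|^2$, together with the fact that collinear triples force large correlations after projection orthogonal to $u_p$, as in the design-matrix argument.
\end{enumerate}
This is the Dadush et al.\ type bound, with degree at least $(1-C/r)m$.

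The flat $F$ is produced by a greedy concentration procedure. Start with $F=\emptyset$. While the contraction $P/F$, viewed in rank $d+1-\rank F$, has a flat containing a disproportionately large fraction of its points relative to its rank, absorb that flat into $F$. I would stop once either $\dim F$ would exceed $k$ or no concentrated flat remains. If the process stops because nothing is concentrated, apply the core estimate to $P\setminus F$ in the contraction by $F$. Degrees in the contraction lower-bound degrees in $P$, up to lines meeting $F$; these should be absorbed by noting that lines through $p$ meeting $F$ correspond to points of $F$, so the loss is accounted for in $n$ rather than in $|P\setminus F|$. The contracted rank is at least $d-k$, which gives most points of $P\setminus F$ degree at least $(1-C'/d)n$.

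To obtain the explicit proportion $1-\frac{2d}{C(k+2)}$, apply Markov's inequality to the excess. If the average excess over $P\setminus F$ is at most $\frac{2n}{k+2}$, then the fraction of points with excess exceeding $Cn/d$ is at most $\frac{2d}{C(k+2)}$. So the target is an average-excess bound of $2n/(k+2)$. This matches the sharp $2n/(d+1)$ in the case $k=d-1$, where Theorem~\ref{thm:introdegree} is the averaged form.

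I expect this step to be the main obstacle: showing that when the greedy procedure halts with $\dim F\le k$, the remaining configuration has average excess at most $2n/(k+2)$, with the optimal constant $2$. For this I would first try to reuse whatever averaged inequality underlies Theorem~\ref{thm:introdegree}, namely
\[
\frac1{|P|}\sum_p\bigl(n-1-\mathrm{d}(p)\bigr)\le \frac{2n}{\text{rank}}-2,
\]
applied inside the normalized contraction. In the case where concentration forces stopping at dimension $k$, I would use the concentration itself to show that points outside $F$ see $F$ through distinct lines. That makes their degree loss controllable by the rank-$(k+2)$ estimate applied to a generic $(k+1)$-dimensional projection.
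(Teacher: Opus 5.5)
There is a genuine gap, and it is the step you flag yourself. Your greedy procedure has no stopping rule that gives both $\dim F\le k$ and a quantitative density bound on the quotient.

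\begin{itemize}
\item If the procedure halts because absorbing the next concentrated flat would push $\dim F$ past $k$, then the halting condition itself says the contraction is still concentrated in your fair-share sense. Radial isotropic position is then unavailable, and your core estimate says nothing. For example, take $k=0$ and a line carrying half the points: you may not absorb the line, and nothing else is available.
\item Your first rescue idea does not close this. Distinctness of the lines from $p\notin F$ to the points of $P\cap F$ accounts for only $|P\cap F|$ lines. It gives no control of the excess on the remaining lines.
\item Your second rescue idea does not close it either. A generic projection to affine dimension $k+1$ does preserve collinearity when $k\ge1$. But the projected set may again be concentrated, so the rank-$(k+2)$ estimate there needs its own exceptional flat. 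That flat pulls back to one of dimension up to $d-1$, not $k$.
\end{itemize}

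The missing idea is to give up on isotropy and use a single threshold tied to $k$. Set $T=1+(n-d-1)/(k+2)$. Take $\tilde F$ to be a minimal-dimensional maximizer of $|\{i:v_i\in U\}|-T\dim U$ over linear subspaces $U\subseteq\C^{d+1}$.
\begin{itemize}
\item Maximality gives density at most $T$ in every subspace of $\C^{d+1}/\tilde F$.
\item If the maximum is positive, the vectors outside $\tilde F$ span the quotient, so there are at least $d+1-\dim\tilde F$ of them. This gives $0<n-(d+1)-(T-1)\dim\tilde F$, which forces $\dim\tilde F\le k+1$.
\item Barthe's theorem then yields only $S\preceq(T+\eta)I$, not isotropy. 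That is enough, because it bounds the frame energy by $(T-1+\eta)|P\setminus F|$.
\end{itemize}

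Your step~2 must also change if you want the constant $2$. The per-point ``project orthogonally to $u_p$'' design-matrix argument is what produces the coefficient $4$ of Dadush et al. Instead:
\begin{itemize}
\item Keep the original lines $L\not\subset F$ as a linear hypergraph on $P\setminus F$, even after passing to the quotient.
\item Apply the trace--rank Cauchy--Schwarz bound $E_I\ge|I|^2/r_I-|I|$ to each edge.
\item Lines through a point of $P\cap F$ collapse to rank one in the quotient and contribute $2(s_L-1)(s_L-2)\ge s_L(s_L-2)$. This is how lines meeting $F$ are actually absorbed; your remark that the loss is ``accounted for in $n$'' does not do this.
\end{itemize}
Together these give $\sum_{p\in P\setminus F}\delta_P(p)\le\sum_{L\not\subset F}s_L(s_L-2)\le 2(T-1)|P\setminus F|$.

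Finally, your Markov step is off by one. Average excess $2n/(k+2)$ with threshold $Cn/d$ only gives degree at least $(1-C/d)n-1$. You need the sharper average $2(n-d-1)/(k+2)$ together with the integer threshold $\lfloor Cn/d\rfloor$. The comparison then closes because $d\lfloor Cn/d\rfloor>Cn-d\ge C(n-d-1)$, using $C\ge1$.
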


The guaranteed proportion is positive precisely when
$C>2d/(k+2)$, and this threshold cannot be lowered. For every
fixed $d\ge2$, $0\le k<d$, and $1\le C<2d/(k+2)$, there are
arbitrarily large configurations for which the proportion of
points of degree at least $(1-C/d)n$ outside any $k$-flat tends
to zero. They are obtained by placing a cyclic Dowling configuration
in a $(k+1)$-flat and adding points to obtain affine dimension $d$;
see Section~\ref{sec:constructions}.

\subsection{Matroid and matrix column bounds}

Our degree bound has consequences for the density of representable
matroids. A complex-representable matroid records the linear
dependencies among a finite collection of complex vectors.
It is \emph{simple} if the representing vectors are nonzero and
pairwise nonparallel, and its rank is the dimension of their span.
Minors are obtained by deletion and contraction, corresponding to
removing vectors and passing to the quotient by the span of
contracted vectors. The uniform rank-two matroid $U_{2,\ell}$
consists of $\ell$ distinct projective points on a line.

\begin{theorem}[Excluded line minors]\label{thm:introminor}
Let $M$ be a simple rank-$r$ complex-representable matroid,
and let $\ell\ge3$ be an integer. If $M$ has no $U_{2,\ell}$-minor, then
\[
|E(M)|\le r+(\ell-3)\binom r2.
\]
\end{theorem}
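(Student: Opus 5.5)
We argue by induction on $r$, contracting a single element chosen via Theorem~\ref{thm:introdegree}. For $r\le1$ the bound reads $|E(M)|\le r$, which holds for any simple matroid. For $r=2$, a simple rank-two matroid with $m$ elements is $U_{2,m}$, so excluding $U_{2,\ell}$ forces $m\le\ell-1=2+(\ell-3)\binom22$. Now let $r\ge3$ and set $n=|E(M)|$. Represent $M$ by pairwise nonparallel nonzero vectors in $\C^r$ spanning $\C^r$. After a generic linear change of coordinates, no vector lies in the hyperplane $x_r=0$. Scaling each vector so that $x_r=1$ then gives $n$ distinct points $P$ in the affine hyperplane $\{x_r=1\}\cong\C^{r-1}$. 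These points have full affine dimension $d=r-1\ge2$, and the flats of $M$ correspond to the affine flats spanned by subsets of $P$.

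Apply Theorem~\ref{thm:introdegree} with $d=r-1$ to get a point $p\in P$, corresponding to an element $e$, with
\[
\mathrm{d}_P(p)\ge n+1-\frac{2n}{r}.
\]
The simplification of $M/e$ has one element for each spanned line through $p$, that is, for each rank-two flat of $M$ containing $e$. Hence $\operatorname{si}(M/e)$ has exactly $\mathrm{d}_P(p)$ elements. It is simple, complex-representable, of rank $r-1$, and has no $U_{2,\ell}$-minor, because minors of minors are minors of $M$. The induction hypothesis therefore gives
\[
n+1-\frac{2n}{r}\le \mathrm{d}_P(p)\le (r-1)+(\ell-3)\binom{r-1}2 .
\]

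It remains to solve this for $n$. Since $r\ge3$, the coefficient $1-2/r$ is positive, and
\[
n\le\frac{r}{r-2}\Bigl(r-2+(\ell-3)\tbinom{r-1}{2}\Bigr)=r+(\ell-3)\frac{r(r-1)}{2}=r+(\ell-3)\binom r2,
\]
using $\frac{r}{r-2}\binom{r-1}{2}=\frac{r(r-1)}{2}$. This closes the induction.

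The only delicate point is the correspondence between spanned lines through $p$ and the elements of $\operatorname{si}(M/e)$. This is standard: parallel classes in $M/e$ are exactly the lines of $M$ through $e$. The additive $+1$ in Theorem~\ref{thm:introdegree} is what makes the recursion close exactly. The linear terms then balance, since $(r-2)\cdot\frac{r}{r-2}=r$, and the arithmetic above is exact rather than lossy.
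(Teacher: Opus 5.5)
Your proof is correct and matches the paper's: induction on rank, contracting an element $e$ that lies on at least $n+1-2n/r$ lines (from Theorem~\ref{thm:introdegree} with $d=r-1$), then applying the induction hypothesis to the simplification of $M/e$, whose elements are exactly the lines through $e$. You also make the affine chart explicit and note that $\ell=3$ needs no separate case; the paper leaves the first implicit and treats the second separately, and both of your choices are valid.
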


Geelen, Nelson, and Walsh~\cite{GNW} proved this bound when $r$
is sufficiently large as a function of $\ell$. As noted by
Dadush et al.\ \cite{DEPRS}, the rank threshold arising from
their proof is at least doubly exponential in $\ell$.
Dadush et al.\ obtained the uniform bound $O(r^4\ell)$.
Theorem~\ref{thm:introminor} gives the sharp numerical bound
in every rank.

For $\ell\ge4$, equality is attained by the cyclic Dowling
geometry $Q_r(\ell-3)$. Geelen, Nelson, and Walsh proved
that this is the unique extremal matroid, up to isomorphism,
in sufficiently large rank. Uniqueness does not hold in every
rank: for each $\ell\ge5$, the jointless cyclic Dowling geometry
$Q'_3(\ell-2)$ is a different rank-three extremal matroid,
with $3(\ell-2)$ elements. Section~\ref{sec:constructions}
gives further equality examples: every irreducible rank-three
reflection configuration, and the rank-four configuration
associated with $G_{32}$.

We apply Theorem~\ref{thm:introminor} to obtain bounds on the number of columns in $\Delta$-submodular matrices, and matrices with bounded circuit imbalance measure.
Column bounds for $\Delta$-submodular matrices have applications
in integer programming: they yield proximity bounds from vertices
of linear relaxations to nearby integer feasible solutions, and
bounds on the $\ell_1$-norms of Graver basis elements~\cite{LPSX}.
The extremal problem goes back to Heller's 1957 theorem for
unimodular matrices~\cite{Heller}.
For real matrices, the
analogous question for bounded circuit imbalance is motivated
by the role of this parameter in the running times of linear
programming algorithms~\cite{ENV}.

A full row-rank matrix $A\in\Z^{r\times n}$ is
\emph{$\Delta$-submodular} if
\[
\Delta_A\coloneq \max_{\substack{B\subseteq[n]\\|B|=r}}|\det A_B|
\le\Delta.
\]
Thus the bound concerns its maximal minors. Its columns are
\emph{distinct up to sign} if $a_i\ne\pm a_j$ whenever $i\ne j$;
parallel columns of different magnitudes are permitted.

Lee, Paat, Stallknecht, and Xu~\cite{LPSX} proved the bound
$\Delta^2\binom{r+1}{2}$, with a refinement to
$O(r^2\Delta^{1.95})$. In the complementary direction,
Averkov and Schymura~\cite{AS} obtained $O(r^4\Delta)$, while
Pribytkova, Gribanov, and Moiseev~\cite{PGM} improved this to
$O(r^3\Delta)$. Our bound is simultaneously quadratic in rank
and linear in $\Delta$.

\begin{theorem}[Distinct integer columns]\label{thm:introcolumns}
Let $A\in\Z^{r\times n}$ have rank $r\ge1$, with
$\Delta_A\le\Delta$, where $\Delta\ge1$ is an integer.
If its columns are nonzero and distinct up to sign, then
\[
n\le(2\Delta-1)\binom{r+1}{2}.
\]
\end{theorem}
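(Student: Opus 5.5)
The plan is to derive the column bound from Theorem~\ref{thm:introminor} with $\ell=2\Delta+1$. For this we must show that the simplification of the matroid represented by $A$ has no $U_{2,2\Delta+1}$-minor, and then bound the number of columns in each parallel class.

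\textbf{Step 1: no $U_{2,2\Delta+1}$-minor.} Let $M=M[A]$ be the column matroid of $A$ over $\C$. Suppose some minor $M/C\setminus D$ is isomorphic to $U_{2,\ell}$. We may take $C$ independent of size $r-2$ and extend it by a basis of the minor. Then $\ell$ columns $a_1,\dots,a_\ell$, together with $C$, have the following property: modulo $\spanop(C)$, the $a_i$ lie in a rank-two space and are pairwise nonparallel. Consider the $2$-dimensional lattice image $\Lambda$ of $\Z^r$ (or of the lattice generated by the columns) under the projection $\Z^r\to\Z^r/(\Z^r\cap\spanop C)\otimes\R$, restricted to the relevant rank-two sublattice $L$. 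Set $D(x,y)=\det[A_C,x,y]$, a bilinear alternating form on this plane. All values $D(a_i,a_j)$ are nonzero integers with absolute value at most $\Delta$. Pick $a_1$ and choose coordinates so that $D(a_1,\cdot)$ is a linear functional. Then the problem reduces to a classical planar fact: a set of pairwise nonparallel integer vectors in a rank-two lattice whose pairwise $2\times2$ determinants (scaled by $|D|$ of a lattice basis, $\ge1$) are at most $\Delta$ in absolute value has at most $2\Delta$ elements. I would prove this planar fact directly. Fix a vector $v$ and write the others in a basis $(v',w)$ of the lattice where $v$ is a multiple of $v'$. Each other vector has a nonzero second coordinate $s$ with $|s|\le\Delta$ (up to the scale). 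Normalize signs so that $s>0$; this is allowed because the matroid only sees projective points. We get $s\in\{1,\dots,\Delta\}$. The first coordinates sharing a given value of $s$ are then constrained by the determinant bound between pairs. A cleaner route is to count slopes: the points $x/s$ are distinct rationals, and the pairwise determinant condition bounds how many fit. I expect this counting to be the main obstacle. To get exactly $2\Delta$, I would first try the Farey-type argument: order the vectors by angle, note that consecutive determinants are $\ge1$, and that the total determinant between the first vector and its negation direction telescopes. That gives at most $2\Delta$ directions besides the one through $v$, which needs adjusting. If the count comes out as $2\Delta+1$, the argument should be refined by choosing $v$ extremal.

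\textbf{Step 2: parallel classes.} Columns distinct up to sign that are parallel are integer multiples $\lambda u$ of a primitive $u$. Extending $u$ by $r-1$ other columns to a basis, the determinants scale by $|\lambda|$, so $|\lambda|\le\Delta$. Hence each parallel class has at most $\Delta$ elements, up to sign. A direct product of these bounds would give $\Delta$ times the bound on $|E(\mathrm{si}(M))|$, which is too weak, since we want $(2\Delta-1)\binom{r+1}2$ overall. So I would instead work with the matroid in which multiples are treated via a lifting: replace each vector $\lambda u$ by a point so that parallel copies become points on a line through an auxiliary element. Concretely, embed into rank $r+1$ by mapping $a\mapsto(a,1)$. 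Then columns distinct up to sign become a simple matroid of rank $r+1$ after identifying $\pm$ (map $a$ and $-a$ consistently by choosing sign). I then check that this lift has no $U_{2,2\Delta+2}$-minor, using the same determinant argument. Theorem~\ref{thm:introminor} with $r+1$ and $\ell=2\Delta+2$ gives $r+1+(2\Delta-1)\binom{r+1}2$. This is off by $r+1$, and removing it is the delicate step. I would try $\ell=2\Delta+1$ together with one extra element bound, or choose the lift to avoid the $r+1$ "joint" elements.
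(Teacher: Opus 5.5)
Your Step~2 carries the real difficulty, and it is not resolved. The lift $a\mapsto(a,1)$ does not preserve the hypothesis. Each maximal minor of the lifted $(r+1)\times n$ matrix is a signed sum of up to $r+1$ maximal minors of $A$, so no ``same determinant argument'' excludes $U_{2,2\Delta+2}$. Here is a concrete failure for $\Delta=1$. Take the totally unimodular columns $e_1,e_2,e_3,e_1-e_2,e_1-e_3,e_2-e_3$ with sign representatives $e_1-e_2,\,e_3,\,-e_1,\,-e_2,\,e_1-e_3,\,e_2-e_3$. Contract the lifts of $e_1-e_2$ and $e_3$. The remaining four lifts are nonloops and pairwise nonparallel, so they form a $U_{2,4}$-minor.

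Even if a good sign choice existed, the bound $r+1+(2\Delta-1)\binom{r+1}{2}$ overshoots, as you note, and you give no way to recover the extra $r+1$. The bound of $\Delta$ per parallel class is also too weak, as you say. The missing idea is to split the columns by their \emph{content} (the gcd of the entries):
\begin{itemize}
\item Primitive columns that are distinct up to sign are pairwise nonparallel. They therefore obey the nonparallel bound $r+(2\Delta-1)\binom r2$.
\item For $q\ge2$, let the columns of content $q$ span rank $s$. Apply a unimodular row change sending their span to the first $s$ coordinates, delete the zero rows, and divide by $q$. The result is a rank-$s$ integer matrix with primitive, pairwise nonparallel columns. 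Its maximal minors are at most $\lfloor\Delta/q^s\rfloor$, because each original determinant, completed by fixed complementary columns, acquires the factor $q^s$.
\item The nonparallel bound in rank $s$ then gives $m_q\le1+2\Delta/(q(q-1))$. Summing over $2\le q\le\Delta$ gives at most $3(\Delta-1)$ non-primitive columns, independent of $r$.
\end{itemize}
Finally, $(2\Delta-1)\binom{r+1}{2}-r-(2\Delta-1)\binom r2-3(\Delta-1)=(\Delta-1)(2r-3)\ge0$ for $r\ge2$, and $r=1$ is trivial. This completes the proof.

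Step~1 also has problems. Your planar claim of at most $2\Delta$ pairwise nonparallel vectors is false for $\Delta=1$. The unimodular vectors $e_1,e_2,e_1+e_2$ form a $U_{2,3}$, whereas Theorem~\ref{thm:introminor} with $\ell=3$ would force $n\le r$. For $\Delta\ge2$ the claim is true, because a prime $\Delta<p\le2\Delta-1$ exists. Your angular telescoping does not prove it, however, since determinants are neither additive nor monotone along the angular order.

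The efficient argument is reduction modulo a prime $p>\Delta$. Every nonzero maximal minor has absolute value in $[1,\Delta]$, so it stays nonzero mod $p$, and the column matroid is $\mathbb F_p$-representable. Its simple rank-two minors therefore have at most $p+1$ elements. Choosing $\Delta<p\le2\Delta$ and applying Theorem~\ref{thm:introminor} with $\ell=p+2$ gives the nonparallel bound $r+(2\Delta-1)\binom r2$ for every $\Delta\ge1$, including $\Delta=1$.
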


Allowing both signs and a zero column changes the bound by at most
a factor of two and an additive one. Thus
Theorem~\ref{thm:introcolumns} answers Question~3 of Averkov and
Schymura~\cite{AS} affirmatively. It recovers Heller's sharp bound
for $\Delta=1$. 

We prove Theorem~\ref{thm:introcolumns} using a transfer lemma:
any upper bound $F(r,\Delta)$ for nonzero pairwise nonparallel
columns yields the bound
\[
F(r,\Delta)+3(\Delta-1)
\]
for nonzero columns distinct up to sign. This also extends the
large-rank estimate of Paat, Stallknecht, Walsh, and Xu~\cite{PSWX}
to our setting, giving
\[
n\le\binom{r+1}{2}+80\Delta^7r+3(\Delta-1)
\]
whenever $r$ is sufficiently large as a function of $\Delta$.
For fixed $\Delta\ge2$ and large rank, this gives a stronger bound
than Theorem~\ref{thm:introcolumns}.

The best known general lower bound for the maximum number of
nonzero columns distinct up to sign in a rank-$r$
$\Delta$-submodular integer matrix is
$\binom{r+1}{2}+r(\Delta-1)$, due to Lee, Paat, Stallknecht,
and Xu~\cite{LPSX}. Averkov and Schymura~\cite{AS} give
families improving this bound for $\Delta \in \{4,8,16\}$.

Finally, we consider real and complex matrices with bounded circuit imbalance; see Subsection~\ref{sec:circuitimbalance} for the definitions.
For rank-$r$ matrices with nonzero pairwise nonparallel columns
and circuit imbalance $\kappa_A$, we improve the bound
$O(r^4\kappa_A)$ of Dadush et al.\ \cite{DEPRS} to
$O(r^2\kappa_A)$ for real matrices, and improve the bound $O(r^4\kappa_A^2)$ to $O(r^2\kappa_A^2)$ for complex matrices.
For rank-$r$ matrices with $\kappa_A\le K$, standard constructions give
lower bounds of $\Omega(r^2+rK)$ over $\R$ and
$\Omega(r^2K+rK^2)$ over $\C$, uniformly for $r\ge2$ and $K\ge1$;
see Dadush et al.~\cite{DEPRS} for the underlying examples.

\subsection{Proof ideas and novelty}\label{subsec:introproofideas}

We first homogenize the points, so that points on an affine line
give vectors spanning a two-dimensional linear space.
Lemma~\ref{lem:frame}, a consequence of Barthe's theorem, says that
if no subspace contains too many vectors relative to its dimension, then
we can change coordinates and normalize the vectors so that no
direction receives too much of their total squared projection.
This bounds the sum of squared inner products.
Lemma~\ref{lem:moments} supplies a lower bound for the contribution
of each line by Cauchy--Schwarz. Since each pair of points belongs
to just one spanned line, these local contributions can be added
and compared with the global bound.

The degree bound requires this argument for configurations that
may have a heavy flat that prevents us from applying Lemma~\ref{lem:frame} directly. Lemma~\ref{lem:quotient} selects a
flat to contract by maximizing point count minus a suitable multiple
of dimension. This choice controls the dimension of the contracted
flat and ensures that the remaining vectors satisfy the required
nonconcentration condition in the quotient.

In affine dimension three, the second-moment comparison alone
does not force quadratically many ordinary lines.
Section~\ref{sec:ordinary} therefore uses fourth moments.
We first apply a second-moment argument to centered rank-one
projectors, obtaining a global lower bound on the variation of
squared inner products. A local fourth-moment estimate bounds
the contribution of each nonordinary line in terms of its second
moment. Comparing the two bounds forces many ordinary lines.
Together with normalization and an elementary count when a plane
contains many points, this proves the conjecture of Basit et al.

The matroid bound follows from a simple induction on rank:
contract a point of large line degree, observe that its incident
lines become distinct points after simplification, and apply
induction to the resulting minor. Most of the reductions in the
matrix column section come from Geelen, Nelson, and Walsh~\cite{GNW}
and Dadush et al.\ \cite{DEPRS}; our sharper matroid bound improves
their consequences. We also prove a transfer lemma for
$\Delta$-submodular matrices that passes from nonparallel columns
to columns distinct up to sign, with an additive loss depending
only on $\Delta$.

Our methods are broadly inspired by the design-matrix literature
\cite{BDWY,DSWdesign}. Two particularly close predecessors are the
work of Dvir, Saraf, and Wigderson on three-query locally correctable
codes~\cite{DSWLCC}, and that of de Borbon and Panov on the Hirzebruch quadratic form of complex hyperplane arrangements~\cite{deBorbonPanov}.
The former contracts concentrated sets of vectors in a preprocessing
step, uses Barthe's theorem to put a balanced configuration into
nearly isotropic position, and exploits the correlations forced by
local dependencies. The latter, in the dual language of hyperplane
arrangements, combines balanced position with second-moment estimates
on rank-two configurations to obtain incidence inequalities, but does not include a contraction step.

The main technical novelties are the selection of the flat for
contraction, the local fourth-moment bound and its comparison with
the global projector bound, and the transfer lemma for distinct
integer columns.

\paragraph{Conventions.}
Unless explicitly stated otherwise, vector spaces, affine spaces, and
ranks are over $\C$. The adjoint $U^*$ is the conjugate transpose.
For Hermitian matrices, $S\preceq T$ means that $T-S$ is positive
semidefinite. Pair sums $\sum_{i\ne j}$ are \emph{ordered} sums.
A spanned affine line contains at least two distinct points; an
\emph{ordinary} line contains exactly two. A flat means an affine
subspace. Linear subspaces are always called linear subspaces.
We allow the empty exceptional flat and give it point count zero.
The dimension of the empty flat is $-1$.
For $z\in\C$, write $\operatorname{Re}(z)$ and
$\operatorname{Im}(z)$ for its real and imaginary parts.

\section{Frame normalization and second moments}\label{sec:frames}

\subsection{A bound on the frame operator}

For unit vectors $u_1,\ldots,u_n\in\C^D$, the \emph{frame operator}
is $S\coloneq\sum_i u_i u_i^*=UU^*$, where $U$ has columns $u_i$.
The Gram matrix is $G\coloneq U^*U$; $S$ and $G$ have the same nonzero
eigenvalues.

A bound of the form $S\preceq(T+\eta)I_D$ bounds the largest
eigenvalue of $S$ by $T+\eta$.
If $a$ of the vectors lie in a $j$-dimensional linear subspace $W$,
and $P_W$ denotes the orthogonal projection onto $W$, then
$a\le\sum_i\|P_Wu_i\|^2=\tr(P_WS)\le(T+\eta)j$.
Thus excessive concentration
prevents such a bound. The following lemma gives a partial converse:
when no linear subspace contains too many vectors, an invertible
transformation makes the desired bound hold after normalization.

\begin{samepage}
\begin{lemma}[A bound on the frame operator]\label{lem:frame}
Let $(v_1,\ldots,v_n)$ be an indexed multiset of nonzero vectors
spanning $\C^D$, with repeated and parallel vectors allowed.
Suppose that $T\geq n/D$ and
\begin{equation}\label{eq:density}
\sum_{i=1}^n\mathrm{1}_{\{v_i\in W\}}\le T\dim W
\qquad\text{for every linear subspace }W\subseteq\C^D.
\end{equation}
Here the indicator is one when $v_i\in W$ and zero otherwise;
each occurrence of a repeated vector contributes separately.
For every $\eta>0$ there is an invertible complex linear map $A$ such that
\begin{equation}\label{eq:frame}
u_i\coloneq\frac{Av_i}{\|Av_i\|},\qquad
S\coloneq\sum_{i=1}^n u_i u_i^*\preceq(T+\eta)I_D.
\end{equation}
\end{lemma}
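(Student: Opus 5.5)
The plan is to derive this from Barthe's theorem on radial isotropic position, applied to a suitably weighted configuration. Barthe's theorem, in the form of Forster and of Dvir--Saraf--Wigderson, has the following complex version. Let $v_1,\ldots,v_n$ span $\C^D$ and let $c\in\R_{\ge0}^n$ with $\sum_i c_i=D$. Suppose $c$ lies in the relative interior of the basis polytope of the linear matroid of the $v_i$. Then there is an invertible $A$ with
\[
\sum_i c_i\frac{Av_i(Av_i)^*}{\|Av_i\|^2}=I_D .
\]
Equivalently, the hypothesis says that $c(\{i:v_i\in W\})<\dim W$ for every proper nonzero subspace $W$ that is not a separator. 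I would quote this and reduce the lemma to choosing weights $c_i$ with $c_i\ge 1/(T+\eta)$ for all $i$. Given such weights, $S=\sum_i u_iu_i^*\preceq(T+\eta)\sum_i c_iu_iu_i^*=(T+\eta)I_D$, which is the desired bound \eqref{eq:frame}.

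To find the weights, set $\lambda=1/(T+\eta)$, so $\lambda n<D$ because $T\ge n/D$. The condition \eqref{eq:density} gives $\lambda\cdot\#\{i:v_i\in W\}\le \tfrac{T}{T+\eta}\dim W<\dim W$ for every nonzero $W$. Hence the uniform vector $\lambda\mathbf 1$ lies strictly inside the independence polytope, in the sense that every rank constraint $c(X)\le\rank(X)$ is slack. Its total mass is $\lambda n<D$, so we still need to add mass to reach the base polytope. Set $c=\lambda\mathbf 1+\mu$, where $\mu$ is a small positive multiple of a point $b$ in the relative interior of the base polytope. Such a point exists: for example, average the indicator vectors of all bases. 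Then $c$ is a convex-type combination, $c=\lambda\mathbf 1+(1-\lambda n/D)\,b$. The total is $\lambda n+(D-\lambda n)=D$. For each flat $W$,
\[
c(W)\le \lambda\,\#\{i:v_i\in W\}+(1-\lambda n/D)\dim W .
\]
Strict inequality $c(W)<\dim W$ for non-separating proper $W$ comes from the strictness on the $b$ part. More simply, it comes from $\lambda\#W<\dim W\cdot\lambda n/D$ whenever that is available; otherwise use that $b$ is in the relative interior. Also, $c_i\ge\lambda$ for every $i$.

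The main obstacle is the bookkeeping around the relative interior when the matroid is disconnected, that is, when there are separators. The natural fix is to decompose $\C^D$ into the spans of the connected components and apply Barthe to each component separately, then take $A$ block-diagonal. Normalization is per vector, so the blocks do not interact. On a component of rank $D_k$ with $n_k$ vectors, \eqref{eq:density} applied to its span gives $n_k\le TD_k$, so the same construction works with $\lambda n_k<D_k$. Alternatively, I would avoid the exact relative-interior condition by a limiting argument. Barthe/Forster only needs $c$ in the base polytope, in which case one obtains near-isotropy, $\sum c_iu_iu_i^*\preceq(1+\epsilon)I$, by perturbation. The slack $\eta$ absorbs this $\epsilon$, which is presumably why the statement allows an arbitrary $\eta>0$ rather than $\eta=0$. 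Repeated and parallel vectors cause no trouble, since the matroid and the weights are indexed by occurrences.
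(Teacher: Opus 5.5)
Your overall route is the paper's: find weights $c$ in the base polytope with every $c_i$ at least about $1/T$, then invoke exact or approximate radial isotropy. The step that produces the weights fails, however, whenever $T>n/D$. The lemma allows this and the paper uses it, e.g.\ $T=1+(n-D)/(k+2)$ in Lemma~\ref{lem:common} and $T=\ell/2$ in Theorem~\ref{thm:ordinary}. Your bound $c(W)\le\lambda\,\#W+(1-\lambda n/D)\dim W$ is at most $\dim W$ only if $\#W\le (n/D)\dim W$, which is the density hypothesis with $T=n/D$. The hypothesis gives only $\#W\le T\dim W$. Your ``otherwise use that $b$ is in the relative interior'' fallback gives only $b(W)<\dim W$, with no quantitative margin, so $c$ can leave the base polytope altogether.

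For instance, take $D=2$, three copies of $e_1$, one $e_2$, and $T=3$. Then $b=(\tfrac13,\tfrac13,\tfrac13,1)$, and the three copies receive total weight $3\lambda+(1-2\lambda)=1+\lambda>1$. So no $A$ can give $\sum_ic_iu_iu_i^*=I_2$. Passing to connected components does not help, because a component only satisfies $n_k\le TD_k$. Consider the connected configuration in $\C^3$ formed by three copies of $e_1$ together with $e_2$, $e_3$, $e_1+e_2+e_3$. Here $T=3$, $n/D=2$, and there are ten bases. The three copies get total weight $\tfrac9{10}+\tfrac65\lambda>1$ for every $\eta<9$. Your limiting variant needs the same input, a base-polytope point with all coordinates of order $1/T$, so it does not close the gap. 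When $T=n/D$ your construction is fine.

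What is missing is an augmentation argument that adds mass adaptively rather than along a fixed direction $b$. The paper sets $\mathcal K=\{x:x_i\ge1/T,\ x(I)\le r(I)\}$, which is nonempty because it contains $\tfrac1T\mathbf 1$, and takes a maximizer $c$ of $x([n])$ over $\mathcal K$. By submodularity of $r$, tight sets are closed under union. Every index lies in some tight set, since otherwise that coordinate could be raised. Hence $[n]$ is tight and $c([n])=D$.

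With this $c$ your plan goes through. The point $c'=(1-\epsilon)c+\epsilon b$ lies in the relative interior of the base polytope and has $c'_i\ge(1-\epsilon)/T$. Barthe's theorem then gives $S\preceq\frac{T}{1-\epsilon}I_D\preceq(T+\eta)I_D$ for small $\epsilon$. The paper instead avoids quoting Barthe: it works with near-stationary points of $\log\det\sum_ie^{z_i}v_iv_i^*-c\cdot z$, which is bounded below by Cauchy--Binet and the greedy basis. This handles boundary weights without any relative-interior condition.
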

\end{samepage}

This lemma is a consequence of 
Barthe's radial isotropy theorem \cite{Barthe}; see \cite[Section~6.3, Theorems~75,77]{Franks}
for the complex formulation and proof. Appendix~\ref{app:frame} gives a
self-contained proof of the version stated here.

Keeping multiplicities matters after taking a quotient: two originally
distinct points may become parallel. The small error $\eta$ handles
boundary cases where a finite exact normalization need not exist.
Only numerical inequalities independent of $A$ will be passed to the
limit $\eta\downarrow0$; convergence of the transformations is never
required.

\subsection{Bounding the frame energy}

Our general strategy is to prove upper and lower bounds on the
\emph{frame energy}
\[
E\coloneq\sum_{i\ne j}|\langle u_i,u_j\rangle|^2
\]
of $u_1,\ldots,u_n$.
For the lower bound, we consider a hypergraph $\mathcal H$ on
$[n]\coloneq\{1,\ldots,n\}$ whose nonempty edges satisfy
\begin{itemize}
\item $\rank(\{u_i:i\in I\})\in\{1,2\}$ for every $I\in\mathcal H$;
\item $|I\cap J|\le1$ for distinct $I,J\in\mathcal H$.
\end{itemize}
The second condition says that $\mathcal H$ is a \emph{linear hypergraph}.
Each edge of size at least three forces a positive contribution to
the frame energy, which we can sum together due to the second condition. The upper bound on $E$ follows from a bound on the
maximum eigenvalue of $S$.

\begin{lemma}\label{lem:moments}
Let $u_1,\ldots,u_n\in\C^D$ be unit vectors, and put
$S\coloneq\sum_{i=1}^n u_i u_i^*$. Let $\mathcal H$ be a linear hypergraph
on $[n]$ with nonempty edges such that
$r_I\coloneq \rank(\{u_i:i\in I\})\in\{1,2\}$ for every $I\in\mathcal H$.
Then
\begin{equation}\label{moment_bound}
\sum_{\substack{I\in\mathcal H\\r_I=2}}|I|(|I|-2)
+2\sum_{\substack{I\in\mathcal H\\r_I=1}}|I|(|I|-1)
\le2E.
\end{equation}
Moreover, if $S\preceq LI_D$ where $L\ge0$, then $E\le(L-1)n$.
\end{lemma}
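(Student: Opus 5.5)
The plan has two independent parts: a local lower bound on $E$ edge by edge, and a global upper bound from the eigenvalue bound.

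\emph{Lower bound.} Because $\mathcal H$ is linear, each ordered pair $(i,j)$ with $i\ne j$ lies in at most one edge. Hence
\[
E\ \ge\ \sum_{I\in\mathcal H}E_I,\qquad E_I\coloneq\sum_{\substack{i,j\in I\\ i\ne j}}|\langle u_i,u_j\rangle|^2,
\]
and it suffices to prove $2E_I\ge|I|(|I|-2)$ when $r_I=2$, and $E_I\ge|I|(|I|-1)$ when $r_I=1$.

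The rank-one case is immediate: all $u_i$ with $i\in I$ are unit multiples of one vector, so every term equals $1$.

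For $r_I=2$, put $k=|I|$ and $S_I=\sum_{i\in I}u_iu_i^*$, which is supported on a $2$-dimensional space. Using $|\langle u_i,u_i\rangle|^2=1$,
\[
\sum_{i,j\in I}|\langle u_i,u_j\rangle|^2=\tr(S_I^2)=k+E_I .
\]
Since $S_I$ has rank at most $2$ and $\tr S_I=k$, Cauchy--Schwarz on its two eigenvalues gives $\tr(S_I^2)\ge k^2/2$. Therefore $E_I\ge k^2/2-k=k(k-2)/2$, which is the required bound. Summing over edges, with the weight $2$ on the rank-one terms, gives \eqref{moment_bound}.

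\emph{Upper bound.} If $S\preceq LI_D$, then
\[
\tr(S^2)\le L\,\tr S=Ln,
\]
because $S$ is positive semidefinite with eigenvalues in $[0,L]$. On the other hand,
\[
\tr(S^2)=\sum_{i,j}|\langle u_i,u_j\rangle|^2=n+E .
\]
Combining the two gives $E\le(L-1)n$.

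None of these steps is a real obstacle. The only point needing care is the rank-two local estimate, and that is just the eigenvalue Cauchy--Schwarz step above.
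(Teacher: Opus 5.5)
Your proposal is correct and follows essentially the same route as the paper's proof: linearity of $\mathcal H$ gives $E\ge\sum_I E_I$, Cauchy--Schwarz on the nonzero eigenvalues of $S_I$ gives the local bound, and $\tr(S^2)\le L\tr S$ gives the upper bound. The only cosmetic difference is that you compute the rank-one case directly, whereas the paper handles both ranks uniformly via $|I|^2\le r_I\tr(S_I^2)$, which for $r_I=1$ is exactly your observation that every term equals $1$.
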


We use the following elementary trace identities to prove
\eqref{moment_bound}.

\begin{proposition}\label{tr_comp}
Let $u_1,\ldots,u_n\in\C^D$ be unit vectors, and put
$S\coloneq\sum_{i=1}^n u_i u_i^*$. Then
\begin{enumerate}
\item $\tr(S)=n$ and $\rank(S)=\rank(\{u_1,\ldots,u_n\})$.
\item $\tr(S^2)=\sum_{i,j}|\langle u_i,u_j\rangle|^2$.
\end{enumerate}
\end{proposition}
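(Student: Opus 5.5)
Both parts are direct computations with the matrix $U$ whose columns are the $u_i$, using $S=UU^*$ and $G=U^*U$; I will take each identity in turn.

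\emph{Item 1.} For the trace, I will use linearity of the trace together with $\tr(u_iu_i^*)=u_i^*u_i=\|u_i\|^2=1$, which gives $\tr(S)=\sum_i\tr(u_iu_i^*)=n$.

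For the rank, I will show that $S$ and $U$ have the same kernel. Since $S=UU^*$, the inclusion $\ker U^*\subseteq\ker S$ is immediate. Conversely, if $Sx=0$, then
\[
0=x^*Sx=\|U^*x\|^2,
\]
so $x\in\ker U^*$. Hence $\ker S=\ker U^*$. Taking orthogonal complements,
\[
\operatorname{im} S=(\ker S)^\perp=(\ker U^*)^\perp=\operatorname{im} U .
\]
This uses that $S$ is Hermitian, so its image is the orthogonal complement of its kernel. Finally, $\operatorname{im}U=\spanop\{u_1,\ldots,u_n\}$, and this gives $\rank(S)=\rank(\{u_1,\ldots,u_n\})$.

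\emph{Item 2.} I will expand the square:
\[
S^2=\sum_{i,j}u_iu_i^*u_ju_j^*=\sum_{i,j}\langle u_j,u_i\rangle\,u_iu_j^*,
\]
writing $\langle u_j,u_i\rangle\coloneq u_i^*u_j$, the convention that is conjugate-linear in the second slot. The trace of each term is
\[
\tr(u_iu_j^*)=u_j^*u_i=\langle u_i,u_j\rangle=\overline{\langle u_j,u_i\rangle}.
\]
Therefore
\[
\tr(S^2)=\sum_{i,j}\langle u_j,u_i\rangle\,\overline{\langle u_j,u_i\rangle}=\sum_{i,j}|\langle u_i,u_j\rangle|^2 .
\]
The result does not depend on which inner-product convention is used, since only absolute values appear.

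An equivalent route for Item 2 is to use $\tr(S^2)=\tr(UU^*UU^*)=\tr(G^2)=\|G\|_F^2$, because $G$ is Hermitian and its entries are $G_{ij}=u_i^*u_j$.

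None of these steps is a real obstacle. The only places needing care are the conjugation convention and the use of $S$ being Hermitian in the rank argument.
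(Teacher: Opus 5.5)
Your proof is correct. The paper omits these calculations, and it sets up $S=UU^*$ and $G=U^*U$ just before the statement; your argument is the standard computation that notation points to: linearity of the trace, $\ker S=\ker U^*$ via $x^*Sx=\|U^*x\|^2$, and either expanding $S^2$ directly or using $\tr(S^2)=\tr(G^2)=\sum_{i,j}|G_{ij}|^2$.
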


We omit the calculations proving Proposition~\ref{tr_comp}.

\begin{proof}[Proof of Lemma~\ref{lem:moments}]
For a fixed $I\in\mathcal H$, put $S_I\coloneq\sum_{i\in I}u_i u_i^*$ and $r\coloneq r_I$.
By Proposition~\ref{tr_comp}, $\tr(S_I)=|I|$ and
$\rank(S_I)=r$. Since $S_I$ is Hermitian, the spectral theorem
implies that it has exactly $r$ nonzero eigenvalues, counted with
multiplicity. Write these as $\lambda_1,\ldots,\lambda_{r}$.
Cauchy--Schwarz gives
\[
|I|^2=\left(\sum_{j=1}^{r}\lambda_j\right)^2
\le r\sum_{j=1}^{r}\lambda_j^2
=r\tr(S_I^2).
\]
Consequently,
\begin{equation}\label{eq:localenergy}
E_I\coloneq \sum_{\substack{i,j\in I\\i\ne j}}
|\langle u_i,u_j\rangle|^2
=\tr(S_I^2)-|I|
\ge\frac{|I|^2}{r}-|I|.
\end{equation}
Since $\mathcal H$ is linear, each ordered pair of distinct indices
belongs to at most one edge. The edge contributions therefore do
not overlap, and
\[
E\ge\sum_{I\in\mathcal H}E_I
\ge\sum_{I\in\mathcal H}\left(\frac{|I|^2}{r_I}-|I|\right).
\]
Separating the edges according to whether $r_I=1$ or $r_I=2$
and multiplying by two proves \eqref{moment_bound}.

If $S\preceq LI_D$, its eigenvalues lie in $[0,L]$, since $S$ is
positive semidefinite. Hence
\[
E=\tr(S^2)-n\le L\tr(S)-n=(L-1)n,
\]
as required.
\end{proof}

This estimate suffices for the line-count
and matroid applications in Sections~\ref{sec:common}
and~\ref{sec:minors}. Section~\ref{sec:ordinary} introduces the
fourth-moment refinement needed for a quadratic ordinary-line bound.

\subsection{Choosing the quotient}

Next we show that we can always find a subspace $\tilde{F}$ such that a given set of vectors satisfies the hypotheses of Lemma~\ref{lem:frame} after contracting $\tilde{F}$. 

\begin{lemma}\label{lem:quotient}
Let $v_1, \dots, v_n$ be nonzero vectors spanning $\C^D$, where $D \geq 2$. Let $k \in \{0,1, \dots, D-2\}$, and set $T\coloneq 1 + \frac{n-D}{k+2}$. Then there exists a linear subspace $\tilde{F}$ of $\C^D$ with $\dim(\tilde{F}) \leq k+1$ and $\tilde{F} = {\mathrm{span}}(\{v_1, \dots, v_n\} \cap \tilde{F})$, such that, for every linear subspace $W$ of $\C^D / \tilde{F}$,
\begin{align*}
    \sum_{i \in J} {\mathrm{1}}_{\{v_i + \tilde{F} \in W\}} \leq T \dim(W),
\end{align*}
where $J\coloneq \{i \in [n] : v_i \notin \tilde{F}\}$. 
\end{lemma}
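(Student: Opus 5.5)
The plan is an extremal choice: take the flat that maximizes the excess of its vector count over $T$ times its dimension, and check the quotient condition separately for small and large preimages. For a linear subspace $F\subseteq\C^D$ write $c(F)\coloneq|\{i\in[n]:v_i\in F\}|$ and $f(F)\coloneq c(F)-T\dim F$. Among all linear subspaces $F$ with $\dim F\le k+1$ and $F=\spanop(\{v_1,\ldots,v_n\}\cap F)$, the zero subspace included, choose $\tilde F$ maximizing $f$. The family is finite and nonempty, so a maximizer exists. Put $a\coloneq\dim\tilde F$ and $c\coloneq c(\tilde F)$. Since $f(\tilde F)\ge f(\{0\})=0$, we get $c\ge Ta$. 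Note also that $T-1=(n-D)/(k+2)\ge0$, because the $v_i$ span $\C^D$ and so $n\ge D$.

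Now fix a linear subspace $W$ of $\C^D/\tilde F$ with $w\coloneq\dim W$, and let $W'$ be its preimage in $\C^D$. Then $\dim W'=a+w$, and
\[
c(W')=c+\sum_{i\in J}\mathrm 1_{\{v_i+\tilde F\in W\}}.
\]
So the required inequality is equivalent to $c(W')-c\le Tw$. I split into two cases according to $\dim W'$.

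\emph{Case $a+w\le k+1$.} Let $W''\coloneq\spanop(\{v_1,\ldots,v_n\}\cap W')$. This subspace contains $\tilde F$, satisfies $c(W'')=c(W')$, and has $\dim W''\le a+w\le k+1$. It is spanned by its own vectors, so it is admissible, and maximality gives $f(W'')\le f(\tilde F)$. Therefore
\[
c(W')-T(a+w)\le c(W'')-T\dim W''\le c-Ta,
\]
which is exactly $c(W')-c\le Tw$.

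\emph{Case $a+w\ge k+2$.} Maximality is no longer available, so I use spanning instead. Since the $v_i$ span $\C^D$, at least $D-(a+w)$ of them lie outside $W'$, so $c(W')\le n-D+a+w$. It then suffices to show
\[
n-D+a+w-c\le Tw,
\qquad\text{equivalently}\qquad
n-D\le (c-a)+(T-1)w .
\]
Using $c\ge Ta$, the right-hand side is at least $(T-1)(a+w)=(n-D)(a+w)/(k+2)\ge n-D$, since $a+w\ge k+2$ and $n\ge D$.

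The second case is the only point needing care. The key is to use both $f(\tilde F)\ge f(\{0\})$, which gives $c-a\ge (T-1)a$, and the specific value of $T$, so that the bound closes exactly at dimension $k+2$. The degenerate case $W=0$ is trivial.
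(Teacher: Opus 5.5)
Your proof is correct and is essentially the paper's argument. The paper maximizes $|N(U)|-T\dim U$ over all linear subspaces, takes a maximizer of minimal dimension, and uses the same spanning count $n-|N(U)|\ge D-\dim U$ to show that this maximizer has dimension at most $k+1$; you instead restrict the maximization to small spanned subspaces and use that count to show every subspace of dimension at least $k+2$ has excess at most $0\le f(\tilde F)$, so your restricted maximizer is in fact a global maximizer and the two arrangements coincide.
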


\begin{proof}
For every linear subspace $U$ of $\C^D$, let $N(U)\coloneq \{ i \in [n] : v_i \in U\}$. Now choose a maximizer $\tilde{F}$ of the function $|N(U)| - T\dim(U)$ over all linear subspaces $U$ of $\C^D$, such that $\dim(\tilde{F})$ is as small as possible. We show that $\tilde{F}$ is the desired subspace. Let $J\coloneq \{i \in [n] : v_i \notin \tilde{F}\}$. By minimality of its dimension, $\{v_1, \dots, v_n\} \cap \tilde{F}$ spans $\tilde{F}$. 

\begin{claim}
$\dim(\tilde{F}) \leq k+1$.
\end{claim}

\begin{proof}[Proof of Claim]
Suppose first that $|N(\tilde{F})| - T\dim(\tilde{F}) = 0$. In this case $\tilde{F} = \{0\}$, and the claim follows. So we may assume that $|N(\tilde{F})| - T\dim(\tilde{F}) > 0$. If $n = D$, then $T = 1$ and $\{v_1, \dots, v_n\}$ is a basis of $\C^D$; so for every subspace $U$, $|N(U)| \leq T \dim(U)$, and we again have $\tilde{F} = \{0\}$. So we may assume that $n > D$. 

Since $\{v_1, \dots, v_n\}$ spans $\C^D$, $\{v_i + \tilde{F} : i \in J \}$ spans $\C^D / \tilde{F}$. In particular, 
\begin{align*}
    n - |N(\tilde{F})| = |J| \geq D - \dim(\tilde{F}).
\end{align*}
Therefore
\begin{align*}
    0 < |N(\tilde{F})| - \dim(\tilde{F}) - (T - 1)\dim(\tilde{F}) \leq 
    n - D - (T-1) \dim(\tilde{F}), 
\end{align*}
and the desired bound follows.
\end{proof}

Now let $W$ be a linear subspace of $\C^D / \tilde{F}$, and let $\tilde{W}\coloneq \{v \in \C^D : v + \tilde{F} \in W\}$. It follows by the choice of $\tilde{F}$ (and the fact that $\tilde{F}$ is a subspace of $\tilde{W}$) that 
\begin{align*}
    \sum_{i \in J} {\mathrm{1}}_{\{v_i + \tilde{F} \in W\}} \leq |N(\tilde{W})| - |N(\tilde{F})| \leq T(\dim(\tilde{W}) - \dim(\tilde{F})) = T \dim(W),
\end{align*}
as required.
\end{proof}

\section{Degree bounds, average lines and ordinary lines}\label{sec:common}

In this section, we prove Theorems~\ref{thm:introdegree}
and~\ref{thm:intromany} on point degrees, Corollary~\ref{cor:introordinary}
on ordinary lines, and Corollary~\ref{cor:nonconcentratedaverage}
on average line length. These results follow from the following
$L^2$-estimate on the sizes of spanned lines outside a suitably chosen flat.

Let $P\subseteq\C^d$ be a set of $n$ points of affine dimension $d$,
and let $\cL(P)$ denote the set of lines spanned by $P$. For each
$L\in\cL(P)$, put
\[
s_L\coloneq |P\cap L|.
\]

\begin{lemma}\label{lem:common}
Let $P \subseteq \C^d$ be a set of $n$ points of affine dimension $d$. For every integer $0 \leq k < d$, there is a flat $F$ of dimension at most $k$ such that 
\begin{equation}\label{eq:common}
    \sum_{L\not\subset F}s_L(s_L-2)
+\sum_{\substack{L\not\subset F\\L\cap P\cap F\ne\varnothing}}(s_L-2)^2
\le\frac{2(n-d-1)}{k+2}|P \del F|,
\end{equation}
where both sums are taken over lines spanned by $P$.
\end{lemma}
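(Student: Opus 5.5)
We will homogenize, contract a concentrated flat using Lemma~\ref{lem:quotient}, normalize the remaining vectors with Lemma~\ref{lem:frame}, and then compare the two sides of Lemma~\ref{lem:moments}.

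\emph{Setup and choice of the flat.} For $p\in P$ put $v_p\coloneq(p,1)\in\C^{d+1}$ and $D\coloneq d+1$. Since $P$ has affine dimension $d$, these $n$ vectors span $\C^D$. The hypothesis $0\le k<d$ means $k\in\{0,\dots,D-2\}$, so Lemma~\ref{lem:quotient} applies with $T=1+\frac{n-d-1}{k+2}$. It gives a linear subspace $\tilde F$ with $\dim\tilde F\le k+1$ that is spanned by the $v_p$ it contains. Let $F$ be the affine flat spanned by $\{p: v_p\in\tilde F\}$, which may be empty. Its homogenization is $\tilde F$, so $\dim F\le k$, and $v_p\in\tilde F$ exactly when $p\in F$.

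\emph{Normalization.} Put $m\coloneq|P\del F|$, and for $p\notin F$ let $w_p\coloneq v_p+\tilde F$ in $\C^D/\tilde F\cong\C^{D'}$. These vectors are nonzero and span the quotient, although they may be parallel. Lemma~\ref{lem:quotient} gives the density condition~\eqref{eq:density} with this $T$. Taking $W$ to be the whole quotient gives $m\le TD'$, which is the requirement $T\ge m/D'$. For any $\eta>0$, Lemma~\ref{lem:frame} then yields unit vectors $u_p$ with $S\preceq(T+\eta)I$. By Lemma~\ref{lem:moments},
\[
E\le\Bigl(\frac{n-d-1}{k+2}+\eta\Bigr)m .
\]
The map $A$ and the normalization preserve the rank of every subfamily.

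\emph{Hypergraph and local contributions.} For each spanned line $L\not\subset F$, take the edge $I_L\coloneq L\cap P\del F$. This edge is nonempty, and two distinct lines share at most one point, so the hypergraph is linear. The vectors of $L$ span a $2$-dimensional space, so $r_{I_L}\in\{1,2\}$. There are two cases.
\begin{itemize}
\item If $L\cap F=\varnothing$, then $|I_L|=s_L$. Whether the rank is $2$ or $1$ (rank $1$ occurs when $L$ is parallel to a direction of $F$), the contribution to the left side of~\eqref{moment_bound} is at least $s_L(s_L-2)$.
\item If $L$ meets $F$, it does so in one point $q$, and $q\in P$ when $L\cap P\cap F\neq\varnothing$. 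Then $v_q\in\tilde F\cap\spanop(L)$, so the quotient rank is $1$ and $|I_L|=s_L-1$. The contribution is $2(s_L-1)(s_L-2)=s_L(s_L-2)+(s_L-2)^2$.
\end{itemize}
If instead $L$ meets $F$ at a point not in $P$, then $|I_L|=s_L$ and the rank is $1$, which gives at least $2s_L(s_L-1)\ge s_L(s_L-2)$. Lines in that case contribute nothing to the second sum of~\eqref{eq:common}.

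\emph{Conclusion.} Summing, the left side of~\eqref{eq:common} is at most $2E\le 2\bigl(\frac{n-d-1}{k+2}+\eta\bigr)m$. Letting $\eta\downarrow0$ gives~\eqref{eq:common}.

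The main point needing care is the bookkeeping in the middle step. One must check that the quotient rank is exactly $1$ when $L$ passes through a point of $P\cap F$, which is what produces the extra $(s_L-2)^2$ term. One must also check that parallelism with $F$ or a meeting point outside $P$ can only increase the contribution.
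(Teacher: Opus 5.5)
Your proof is correct and takes essentially the same route as the paper: you homogenize, contract the subspace $\tilde F$ from Lemma~\ref{lem:quotient}, normalize with Lemma~\ref{lem:frame}, and apply Lemma~\ref{lem:moments} to the edges $I_L=(L\cap P)\setminus F$, where the rank-one quotient on lines through a point of $P\cap F$ produces the extra $(s_L-2)^2$ term. The differences are cosmetic: you keep the singleton edges, which contribute zero, where the paper discards them, and you explicitly check the hypothesis $T\ge m/D'$ of Lemma~\ref{lem:frame}.
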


\begin{proof}
Let $D\coloneq d+1$ and let $P = \{p_1, \dots, p_n\}$. Lift each point $p_i$ to $v_i\coloneq (p_i, 1) \in \C^D$. By Lemma~\ref{lem:quotient}, there exists a linear subspace $\tilde{F}$ of $\C^D$ of dimension at most $k+1$ such that, for every linear subspace $U$ of $\C^D / \tilde{F}$, 
\begin{align*}
    |\{i \in J: v_i + \tilde{F} \in U\}| \leq T \dim U,
\end{align*}
where $J\coloneq \{1 \leq i \leq n : v_i \notin \tilde{F}\}$ and $T\coloneq 1 + \frac{n-D}{k+2}$. 
Identify each vector $v_i + \tilde{F}$ (where $i \in J$) with $\overline{v_i} \in \C^{D - \dim \tilde{F}}$ by an isomorphism, and let $\eta > 0$. By Lemma~\ref{lem:frame}, there exists a linear automorphism $A$ on $\C^{D - \dim \tilde{F}}$ such that
\begin{equation}
    S \coloneq \sum_{i \in J} u_iu_i^* \preceq (T + \eta)I_{D - \dim \tilde{F}},
\end{equation}
where $u_i \coloneq A\overline{v_i} / \lVert A\overline{v_i} \rVert$ for each $i \in J$.

Let $F\coloneq \{v \in \C^d : (v,1) \in \tilde{F} \}$. For each $L \in \cL(P)$ with $L \nsubseteq F$, let $I_L\coloneq \{i \in J : p_i \in L\}$. Let $\cL'\coloneq \{L \in \cL(P) : L \nsubseteq F, |I_L| \geq 2\}$ (that is, $\cL'$ is the set of spanned lines not contained in $F$, excluding those ordinary lines that contain a point of $F \cap P$). The edges $I_L$, for $L\in\cL'$, form a linear hypergraph $\mathcal H$ on $J$: distinct original lines share at most one point. These edges are defined by the original lines, even if their associated vectors span the same subspace after taking the quotient. 
For each $r\in\{1,2\}$, let $\cL_r'$ consist of those lines
$L\in\cL'$ for which $\{u_i:i\in I_L\}$ has rank exactly $r$ in $\C^{D - \dim \tilde{F}}$.
By Lemma~\ref{lem:moments}, 
\begin{equation}\label{IL}
    \sum_{L \in \cL_2'} |I_L|(|I_L|-2) + 2 \sum_{L \in \cL_1'} |I_L|(|I_L| - 1) 
    \leq 2(T - 1 + \eta)|P \del F|
\end{equation}
Now, for $L \in \cL'$ with $L \cap P \cap F = \emptyset$, we have that $|I_L| = s_L$, and so $L$ contributes at least $s_L(s_L-2)$ to the left-hand side of \eqref{IL} (whether $L \in \cL_2'$ or $L \in \cL_1'$). For $L \in \cL'$ with $L \cap P \cap F \neq \emptyset$, we have $|I_L| = s_L -1$. In this case, $L \in \cL_1'$, and so $L$ contributes $2(s_L - 1)(s_L - 2) = s_L(s_L - 2) + (s_L - 2)^2$ to the left-hand side of \eqref{IL}. Since $\eta > 0$ was arbitrary, the result follows.
\end{proof}

\subsection{Degree bounds}

Now we use Lemma~\ref{lem:common} to prove Theorems~\ref{thm:introdegree} and~\ref{thm:intromany}, which give bounds on point degrees and on the number of high-degree points in a set $P\subseteq\C^d$ of $n$ points of affine dimension $d$. The idea is to take a $k$-flat $F$ as given by Lemma~\ref{lem:common}, and observe that, on average, a point $p \in P {\setminus} F$ has bounded {\it{deficit}}, which we define by
\begin{align*}
     \delta_P(p) \coloneq n - 1 - \mathrm{d}_P(p).
\end{align*}
It will then follow by Markov's inequality that a significant proportion of points in $P {\setminus} F$ have small deficit, and therefore large degree. First we observe how the average deficit relates to the $L^2$-norm for sizes of lines outside of $F$. It is helpful to note that, for a point $p \in P$, $\delta_P(p) = \sum_{L : p \in L} (s_L - 2)$.

\begin{proposition}\label{avg_deficit}
For any flat $F$, we have that 
\begin{align*}
     \sum_{p \in P {\setminus} F} \delta_P(p) \leq \sum_{L \nsubseteq F} s_L(s_L - 2).
\end{align*}
\end{proposition}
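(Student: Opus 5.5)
We double count, using the identity $\delta_P(p)=\sum_{L\ni p}(s_L-2)$ noted just before the statement, where the sum runs over spanned lines through $p$. This identity holds because the $n-1$ other points are partitioned by the lines through $p$, and a line $L\ni p$ accounts for $s_L-1$ of those points but only one unit of degree. Summing over $p\in P\setminus F$ and exchanging the order of summation gives
\[
\sum_{p\in P\setminus F}\delta_P(p)=\sum_{L\in\cL(P)}|L\cap(P\setminus F)|\,(s_L-2).
\]

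Next we discard the lines $L\subseteq F$. Such lines contain no point of $P\setminus F$, so their weight $|L\cap(P\setminus F)|$ is zero. The right-hand side therefore reduces to a sum over $L\not\subseteq F$.

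For each remaining line $L\not\subseteq F$ we compare term by term. Since every spanned line satisfies $s_L\ge2$, the factor $s_L-2$ is nonnegative. We also have $|L\cap(P\setminus F)|\le |L\cap P|=s_L$. Hence each term is at most $s_L(s_L-2)$, and summing over $L\not\subseteq F$ gives the claimed inequality.

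There is no real obstacle here. The only points to check are that $s_L-2\ge0$, so that enlarging the weight to $s_L$ can only increase the sum, and that lines inside $F$ contribute nothing. If $F$ is empty, the argument still applies verbatim and gives equality.
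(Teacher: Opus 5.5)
Your proposal is correct and follows the paper's proof exactly: you write $\delta_P(p)=\sum_{L\ni p}(s_L-2)$, exchange the order of summation to get $\sum_{L\not\subseteq F}|L\cap(P\setminus F)|(s_L-2)$, and bound $|L\cap(P\setminus F)|\le s_L$. You also state explicitly that $s_L-2\ge0$, which the paper uses without comment; your remark that equality holds when $F=\varnothing$ is also correct.
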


\begin{proof}
Observe that 
\begin{align*}
    \sum_{p \in P \del F}(n - 1 - \mathrm{d}_P(p)) &= \sum_{p \in P \del F} \sum_{L : p \in L} (s_L - 2) \\
    &= \sum_{L \nsubseteq F} |L \cap (P \del F)|(s_L - 2).
\end{align*}
Since $|L \cap (P \del F)| \leq s_L$, this gives the desired bound.
\end{proof}

Hence if we choose a flat $F$ of dimension at most $k$, as given by Lemma~\ref{lem:common}, we get that the average deficit of a point outside of $F$ is at most $\frac{2(n-d-1)}{k+2}$. We now restate Theorems~\ref{thm:introdegree} and~\ref{thm:intromany} as parts (i) and (ii), respectively, and prove them.

\begin{theorem}\label{thm:dirac}
Let $P \subseteq \C^d$ be a set of $n$ points of affine dimension $d\ge1$. 
\begin{enumerate}
    \item[(i)] There exists a point $p \in P$ of degree at least $n + 1 - \frac{2n}{d+1}$. Moreover, $p$ lies on at least $n + 3 - \frac{4n}{d+1}$ ordinary lines.
    \item[(ii)] For any $C \geq 1$ and any $k \in \{0, 1, \dots, d-1\}$, there exists a flat $F$ of dimension at most $k$ such that at least $(1 - \frac{2d}{C(k+2)})_+|P \del F|$ points in $P \del F$ have degree at least $(1 - \frac{C}{d})n$. 
\end{enumerate}
\end{theorem}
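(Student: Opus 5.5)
The plan is to apply Lemma~\ref{lem:common} together with Proposition~\ref{avg_deficit}, then finish by averaging for (i) and Markov's inequality for (ii). Fix $k\in\{0,\dots,d-1\}$ and let $F$ be the flat of dimension at most $k$ given by Lemma~\ref{lem:common}. Dropping the second (nonnegative) sum in \eqref{eq:common} and combining with Proposition~\ref{avg_deficit} gives
\[
\sum_{p\in P\setminus F}\delta_P(p)\le\frac{2(n-d-1)}{k+2}\,|P\setminus F|.
\]
Since $\dim F\le k<d$ and $P$ has affine dimension $d$, the set $P\setminus F$ is nonempty. Hence some $p\in P\setminus F$ has deficit at most the average.

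For (i), take $k=d-1$. Choose $p\in P\setminus F$ with $\delta_P(p)\le 2(n-d-1)/(d+1)$. Then
\[
\mathrm{d}_P(p)=n-1-\delta_P(p)\ge n-1-\frac{2n}{d+1}+2=n+1-\frac{2n}{d+1}.
\]
For the ordinary lines through this same $p$, note that each nonordinary line through $p$ has $s_L-2\ge1$. Therefore the number of nonordinary lines through $p$ is at most $\sum_{L\ni p}(s_L-2)=\delta_P(p)$. This gives
\[
o_P(p)\ge \mathrm{d}_P(p)-\delta_P(p)=n-1-2\delta_P(p)\ge n-1-\frac{4(n-d-1)}{d+1}=n+3-\frac{4n}{d+1}.
\]
The case $d=1$ is covered by the same computation with $k=0$, and the bounds are trivial there anyway.

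For (ii), fix $C\ge1$ and $k$, and use the same $F$. There are two trivial cases. If $Cn\le d$, the degree threshold $(1-C/d)n$ is at most $0$, so every point qualifies. If $P\setminus F=\varnothing$, there is nothing to prove. Otherwise, a point of degree below $(1-C/d)n$ has deficit greater than $Cn/d-1>0$. Markov's inequality then bounds the number of such points in $P\setminus F$ by
\[
\frac{2(n-d-1)}{(k+2)(Cn/d-1)}\,|P\setminus F|=\frac{2d(n-d-1)}{(k+2)(Cn-d)}\,|P\setminus F|.
\]
It remains to check $(n-d-1)/(Cn-d)\le 1/C$, which is equivalent to $C(n-d-1)\le Cn-d$, i.e.\ $d\le C(d+1)$. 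This holds since $C\ge1$. So the proportion of bad points is at most $2d/(C(k+2))$, which gives (ii).

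The main subtlety is the off-by-one bookkeeping. The exact form $n-d-1$ from Lemma~\ref{lem:common} (rather than $n$) is what yields the additive constants $+1$ and $+3$ in (i). In (ii), the strict threshold $Cn/d-1$ must be handled carefully; the slack from $n-d-1$ absorbs the $-1$. Everything else is a direct application of the earlier lemmas.
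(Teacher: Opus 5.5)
Your proof is correct and follows the paper's argument: Lemma~\ref{lem:common} together with Proposition~\ref{avg_deficit}, averaging over $P\setminus F$ with $k=d-1$ for (i), and Markov's inequality for (ii). Bounding the number of nonordinary lines through $p$ by $\delta_P(p)$ is equivalent to the paper's $\sum(s_L-1)\le 2\sum(s_L-2)$, and your threshold $Cn/d-1$ in place of the paper's $\lfloor Cn/d\rfloor$ is immaterial. One slip: if $Cn\le d$ then the threshold $(1-C/d)n$ would be at least $n-1$, not at most $0$; this does no harm, because $n\ge d+1$ and $C\ge1$ force $Cn>d$, so that case never occurs.
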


\begin{proof}
For (i), choose a flat $H$ of dimension at most $d-1$, as given by Lemma~\ref{lem:common}. Then by Proposition~\ref{avg_deficit}, there exists a point $p \in P \del H$ with deficit at most $2(n-d-1)/(d+1)$. It follows that
\begin{align*}
    \mathrm{d}_P(p) \geq n - 1 - \frac{2(n-d-1)}{d+1} = n+1 - \frac{2n}{d+1}.
\end{align*}
Now, letting $o_P(p)$ denote the number of ordinary lines incident to $p$ and $\mathcal{L}'$ the set of nonordinary spanned lines incident to $p$, we have that
\begin{align*}
    n - 1 - o_P(p) = \sum_{L \in \mathcal{L}'} (s_L - 1) \leq 2 \sum_{L : p \in L} (s_L - 2) \leq \frac{4(n-d-1)}{d+1},
\end{align*}
and the desired bound follows.

For (ii), let $C \geq 1$ and let $k \in \{0,1, \dots, d-1\}$. Again, by Lemma~\ref{lem:common} and Proposition~\ref{avg_deficit}, there exists a flat $F$ of dimension at most $k$, such that the average deficit of a point in $P \del F$ is at most $\frac{2(n-d-1)}{k+2}$. Therefore, by Markov's inequality, at most 
\begin{align*}
   \frac{2(n-d-1)}{(k+2)\lfloor \frac{Cn}{d} \rfloor} |P \del F| 
   \leq \frac{2d}{C(k+2)} |P \del F|
\end{align*}
points in $P \del F$ have deficit at least $\lfloor \frac{Cn}{d} \rfloor$. It is easily seen (by considering integrality) that a point $p \in P$ has deficit less than $\lfloor \frac{Cn}{d} \rfloor$ if and only if it has degree at least $(1 - \frac{C}{d})n$. Hence at least $(1 - \frac{2d}{C(k+2)})_+|P \del F|$ points in $P \del F$ have degree at least $(1 - \frac{C}{d})n$.
\end{proof}

\subsection{Incidences and line lengths under nonconcentration}
\label{subsec:nonconcentration}

The results in this subsection use a nonconcentration hypothesis.
Point sets satisfying this nonconcentration hypothesis can be scaled to nearly isotropic position by Lemma~\ref{lem:frame} even without contracting a subspace, so Lemma~\ref{lem:quotient} plays no role in these proofs.

The incidence inequality of de Borbon and Panov
\cite[Corollary~4.1]{deBorbonPanov} and \cite[Theorem 6.29]{deBorbonPanovMY}, recovered in
Corollary~\ref{cor:nonconcentratedincidences}, generalizes
Langer's planar inequality~\cite{Langer}, which is stronger
than Hirzebruch's inequality~\cite{Hirzebruch1986}.
Finding an elementary proof of Hirzebruch's inequality
was explicitly posed as a problem by Brass, Moser, and
Pach~\cite[p.~315, Problem~7]{BrassMoserPach}.
The proof given here is not essentially different from that in \cite[Corollary~4.1]{deBorbonPanov}: both combine isotropic
normalization with rank-two frame energy estimates.

\begin{corollary}[An incidence bound under nonconcentration]
\label{cor:nonconcentratedincidences}
For $d \ge 2$ and $n\geq 1$, let $P\subset\C^d$ consist of $n$ distinct points. 
Suppose that
\begin{equation}\label{eq:uniformnonconcentration}
M_j(P)\le\frac{j+1}{d+1}n
\qquad(0\le j<d).
\end{equation}
Then
\begin{equation}\label{eq:nonconcentratedincidences}
\sum_{L\in\cL(P)}s_L
\ge\frac{d-1}{d+1}n^2+n.
\end{equation}
\end{corollary}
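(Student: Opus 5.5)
The plan is to run the second-moment argument of Lemma~\ref{lem:moments} directly on the homogenized points. Because of the nonconcentration hypothesis no contraction is needed, so Lemma~\ref{lem:quotient} is not used. Put $D\coloneq d+1$ and lift each $p_i\in P$ to $v_i\coloneq(p_i,1)\in\C^D$. These vectors are nonzero and pairwise nonparallel, since the points are distinct and the last coordinates equal $1$. They span $\C^D$, because $P$ has full affine span: \eqref{eq:uniformnonconcentration} with $j=d-1$ gives $M_{d-1}(P)<n$. Take $T\coloneq n/D$, so the requirement $T\ge n/D$ of Lemma~\ref{lem:frame} holds with equality.

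The first step is to verify the density condition \eqref{eq:density}. Let $W\subseteq\C^D$ be a linear subspace of dimension $m\ge1$.
\begin{itemize}
\item If $m=D$, the count is $n=TD$.
\item If $m\le d$, every $v_i\in W$ has last coordinate $1$. The set $\{x\in\C^d:(x,1)\in W\}$ is therefore an affine flat of dimension at most $m-1\le d-1$. By \eqref{eq:uniformnonconcentration} with $j=m-1$, at most $\frac{m}{d+1}n=T\dim W$ of the $v_i$ lie in $W$.
\end{itemize}
So for each $\eta>0$, Lemma~\ref{lem:frame} gives normalized vectors $u_i$ with $S\preceq(n/D+\eta)I_D$. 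The second part of Lemma~\ref{lem:moments} then gives $E\le(n/(d+1)-1+\eta)n$.

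For the lower bound on $E$, take $\mathcal H$ to be the hypergraph whose edges are the index sets $\{i:p_i\in L\}$ for $L\in\cL(P)$. This is a linear hypergraph, since two points determine one line. Every edge has rank exactly $2$: the lifted vectors of distinct collinear points are pairwise nonparallel and span a $2$-dimensional space, and the invertible map $A$ followed by rescaling preserves this. Lemma~\ref{lem:moments} gives
\[
\sum_{L\in\cL(P)}s_L(s_L-2)\le 2E\le 2\Big(\frac{n}{d+1}-1+\eta\Big)n ,
\]
and letting $\eta\downarrow0$ removes $\eta$.

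Finally, every ordered pair of distinct points lies on exactly one spanned line, so $\sum_L s_L(s_L-1)=n(n-1)$. Hence
\[
\sum_L s_L(s_L-2)=n(n-1)-\sum_L s_L .
\]
Substituting this into the previous bound gives
\[
\sum_L s_L\ \ge\ n^2-n-\frac{2n^2}{d+1}+2n\ =\ \frac{d-1}{d+1}n^2+n ,
\]
which is \eqref{eq:nonconcentratedincidences}.

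The only step needing care is the density verification. It depends on matching a linear subspace of dimension $m$ with an affine flat of dimension $m-1$, which is exactly the index shift built into \eqref{eq:uniformnonconcentration}.
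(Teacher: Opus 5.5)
Your proof is correct and is essentially the paper's argument. The paper runs the construction of Lemma~\ref{lem:common} with $k=d-1$, notes that the chosen quotient subspace is trivial (so $F=\varnothing$), and then performs the same pair-counting subtraction. Your direct application of Lemma~\ref{lem:frame} and Lemma~\ref{lem:moments} with $T=n/(d+1)$ is that same computation with the trivial contraction left out, and your explicit check that the lifts span $\C^{d+1}$ is a welcome detail.
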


\begin{proof}
Use the construction in the proof of Lemma~\ref{lem:common}
with $k=d-1$, so that
\[
T\coloneq 1+\frac{n-d-1}{d+1}=\frac{n}{d+1}.
\]
For the homogeneous lifts $v_p\coloneq(p,1)\in\C^{d+1}$, hypothesis
\eqref{eq:uniformnonconcentration} gives
\[
|\{p\in P:v_p\in W\}|\le T\dim W
\]
for every linear subspace $W\subseteq\C^{d+1}$.
Consequently, the minimum-dimensional maximizer of
$|\{p\in P:v_p\in W\}|-T\dim W$ is $W=0$.
Thus the construction takes $F=\varnothing$, and
\eqref{eq:common} gives
\begin{equation}\label{eq:nonconcentratedcost}
\sum_{L\in\cL(P)}s_L(s_L-2)
\le\frac{2n(n-d-1)}{d+1}.
\end{equation}
Subtracting this inequality from the pair-counting identity
\[
\sum_{L\in\cL(P)}s_L(s_L-1)=n(n-1)
\]
proves \eqref{eq:nonconcentratedincidences}.
\end{proof}

The following consequence of Corollary~\ref{cor:nonconcentratedincidences} restates Corollary~\ref{cor:introordinary}.

\begin{corollary}[Ordinary lines under nonconcentration]
\label{cor:nonconcentratedordinary}
Under the hypotheses of
Corollary~\ref{cor:nonconcentratedincidences}, if $d\ge3$, then
the number $t_2$ of ordinary lines satisfies
\[
t_2\ge\frac{d-3}{2(d+1)}n^2+\frac32n.
\]
In particular, $t_2\ge3n/2$ when $d=3$, and the bound is
quadratic in $n$ for every fixed $d\ge4$.
\end{corollary}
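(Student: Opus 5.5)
The plan is to read the corollary off the second-moment inequality \eqref{eq:nonconcentratedcost} from the proof of Corollary~\ref{cor:nonconcentratedincidences}, combined with the pair-counting identity, using a pointwise comparison on long lines. Under the hypotheses \eqref{eq:uniformnonconcentration}, that proof shows the construction of Lemma~\ref{lem:common} with $k=d-1$ yields $F=\varnothing$. Hence
\[
\sum_{L\in\cL(P)}s_L(s_L-2)\le\frac{2n(n-d-1)}{d+1}.
\]
Ordinary lines contribute $0$ to this sum, since $s_L=2$. So it only controls lines with $s_L\ge3$.

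The key elementary step is the comparison $s(s-1)\le 2s(s-2)$ for every integer $s\ge3$. This holds because it is equivalent to $s-1\le 2s-4$, that is, to $s\ge3$. Applying it to every nonordinary line gives
\[
\sum_{L:\,s_L\ge3}s_L(s_L-1)\le 2\sum_{L}s_L(s_L-2)\le\frac{4n(n-d-1)}{d+1}.
\]

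Next I would split the pair-counting identity $\sum_L s_L(s_L-1)=n(n-1)$ into ordinary and nonordinary lines. Each ordinary line contributes exactly $2$, so
\[
2t_2=n(n-1)-\sum_{L:\,s_L\ge3}s_L(s_L-1)\ge n(n-1)-\frac{4n(n-d-1)}{d+1}.
\]
Simplifying, the right-hand side equals $n^2-n-\frac{4n^2}{d+1}+4n=\frac{d-3}{d+1}n^2+3n$. Dividing by $2$ gives the claimed bound.

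The specializations are then immediate. For $d=3$ the quadratic term vanishes, leaving $t_2\ge 3n/2$. For $d\ge4$ the coefficient $\frac{d-3}{2(d+1)}$ is positive. There is no real obstacle here: the only point needing care is that the constant-factor comparison is applied only to lines with $s_L\ge3$, so that ordinary lines, which are invisible to the second-moment sum, are exactly accounted for through the pair-counting identity.
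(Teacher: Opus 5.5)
Your proposal is correct and is essentially the paper's proof. The paper likewise uses $\binom{s}{2}\le s(s-2)$ for $s\ge3$ (your $s(s-1)\le 2s(s-2)$) together with the pair-counting identity to bound $\binom n2-t_2$ by \eqref{eq:nonconcentratedcost}, and your arithmetic yielding $\frac{d-3}{2(d+1)}n^2+\frac32 n$ is right.
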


\begin{proof}
Since $\binom{s}{2}\le s(s-2)$ for $s\ge3$,
\eqref{eq:nonconcentratedcost} implies
\[
\binom n2-t_2
=\sum_{L:\,s_L\ge3}\binom{s_L}{2}
\le\sum_{L\in\cL(P)}s_L(s_L-2)
\le\frac{2n(n-d-1)}{d+1}.
\]
Rearranging gives the stated bound.
\end{proof}

Let
\[
\overline s\coloneq \frac{1}{|\cL(P)|}\sum_{L\in\cL(P)}s_L
\]
be the average length of lines spanned by $P$.

The following corollary applies Corollary~\ref{cor:nonconcentratedincidences} to
the average length of a spanned line. This question belongs
to the study of average flat sizes in representable matroids;
see Campbell, Geelen, and Kroeker~\cite{CGK} and Campbell,
Kroeker, and Lund~\cite{CKL}. It also has a separate connection
to bounded negativity: Bauer et al.\ \cite{BauerEtAl} study the average intersection multiplicity
of a line arrangement, which is the same quantity under
projective duality.

The bounds below have exact equality examples, discussed in
Section~\ref{sec:constructions}. For fixed dimension, their
asymptotic optimality as $n\to\infty$ remains open.
In particular, Campbell, Geelen, and Kroeker
\cite[Conjecture~2.1]{CGK} predict an upper bound of $3+o(1)$
for $n$ noncollinear points in the complex plane.
In higher dimensions, the Dowling examples show that the leading
correction $4/(3d)$ is optimal as $d\to\infty$.

\begin{corollary}[Average line length under nonconcentration]
\label{cor:nonconcentratedaverage}
Under the hypotheses of
Corollary~\ref{cor:nonconcentratedincidences}, if $d\ge3$, then
\[
\overline s
\le
\frac{6\bigl((d-1)n+d+1\bigr)}
     {(3d-5)n+5(d+1)}
<
2+\frac{4}{3d-5}.
\]
If $d=2$, then
\[
\overline s
\le
\min\left\{
\frac{6(n+3)}{n+15},
\frac{4(n+3)}{n+7}
\right\}
<4.
\]
\end{corollary}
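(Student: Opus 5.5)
The plan is to combine the incidence lower bound of Corollary~\ref{cor:nonconcentratedincidences} with the pair-counting identity and an elementary quadratic inequality satisfied by every integer line size, in order to bound the number of spanned lines from below. Write $N\coloneq|\cL(P)|$, $I\coloneq\sum_L s_L$ and $Q\coloneq\sum_L s_L^2$. Pair counting gives $Q-I=n(n-1)$, and Corollary~\ref{cor:nonconcentratedincidences} gives
\[
I\ge I_0\coloneq\frac{n\bigl((d-1)n+d+1\bigr)}{d+1}.
\]
Since $\overline s=I/N$, it suffices to bound $N$ from below by an expression in $I$. The resulting bound on $\overline s$ should be decreasing in $I$, so that we may substitute $I_0$.

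For $d\ge3$ I will use $(s_L-2)(s_L-3)\ge0$, which holds because $s_L$ is an integer at least $2$. Summing over lines gives $Q-5I+6N\ge0$. Eliminating $Q$ by pair counting yields
\[
6N\ge 4I-n(n-1),
\qquad\text{so}\qquad
\overline s\le\frac{6I}{4I-n(n-1)}.
\]
This is valid once the denominator is positive. The right-hand side is decreasing in $I$, so we may substitute $I_0$. A short computation gives
\[
4I_0-n(n-1)=\frac{n\bigl((3d-5)n+5(d+1)\bigr)}{d+1},
\]
which is positive for $d\ge2$. Substituting produces exactly the stated bound $6((d-1)n+d+1)/((3d-5)n+5(d+1))$. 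For the strict inequality, compare this fraction with its limit $6(d-1)/(3d-5)=2+4/(3d-5)$. Cross-multiplying reduces the comparison to $6(d+1)(3d-5)<30(d+1)(d-1)$, which holds because $3d-5<5d-5$.

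For $d=2$ the same computation gives the first bound $6(n+3)/(n+15)$, since $I_0=n^2/3+n$. For the second bound I will instead use $(s_L-3)(s_L-4)\ge0$, which is again valid for all integers. It gives $Q-7I+12N\ge0$, hence $12N\ge6I-n(n-1)$. Therefore $\overline s\le 12I/(6I-n(n-1))$, which is again decreasing in $I$. Substituting $I_0$ gives $6I_0-n(n-1)=n(n+7)$, so $\overline s\le 4(n+3)/(n+7)$, and this is clearly less than $4$.

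There is no genuinely hard step here. The main point is to choose the quadratic $(s-a)(s-b)$ correctly so that the constants match the stated formulas. The one place needing care is checking positivity of the denominators and monotonicity in $I$, because these are what justify replacing $I$ by the lower bound $I_0$.
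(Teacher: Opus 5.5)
Your proposal is correct and is essentially the paper's proof. The paper also sums $(s_L-k)(s_L-k-1)\ge0$ for $k\in\{2,3\}$ and eliminates $\sum_L s_L^2$ by pair counting. It then substitutes the incidence bound of Corollary~\ref{cor:nonconcentratedincidences}, working in the reciprocal form $k(k+1)/\overline s\ge 2k-n(n-1)/I$, where replacing $I$ by its lower bound is immediate and plays the role of your monotonicity check.
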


\begin{proof}
Put $b\coloneq|\cL(P)|$ and
$I\coloneq\sum_{L\in\cL(P)}s_L$.
For $k\in\{2,3\}$, integrality gives
$(s_L-k)(s_L-k-1)\ge0$.
Summing and using the pair-counting identity
$\sum_L s_L(s_L-1)=n(n-1)$ and
\eqref{eq:nonconcentratedincidences}, we obtain
\[
\frac{k(k+1)}{\overline s}
=\frac{k(k+1)b}{I}
\ge 2k-\frac{n(n-1)}{I}
\ge 2k-\frac{(d+1)(n-1)}{(d-1)n+d+1}.
\]
Taking $k=2$ gives the bound for $d\ge3$ and the first
planar bound; taking $k=3$ gives the second planar bound.
\end{proof}

\section{Reflection arrangements and sharpness}
\label{sec:constructions}

A complex reflection is a nonidentity linear transformation
that fixes a hyperplane pointwise. Let $W$ be a finite irreducible reflection group acting on $\mathbb{C}^r$.
Its reflection arrangement $\mathcal A(W)$ consists
of its reflecting hyperplanes. The corresponding projective
configuration $P_W$ consists of their normal lines. Choosing
an affine chart containing $P_W$ gives a point set of affine
dimension $d=r-1$, with the same incidence structure.

Two infinite families arise from the cyclic Dowling geometries
introduced by Dowling~\cite{Dowling}. Write
$\mu_t\coloneq\{\zeta\in\C:\zeta^t=1\}$. The cyclic Dowling
configuration is
\[
Q_r(t)=
\{[e_i]:1\le i\le r\}
\cup
\{[e_i-\zeta e_j]:1\le i<j\le r,\ \zeta\in\mu_t\},
\]
and the jointless cyclic Dowling configuration $Q'_r(t)$ is
obtained by deleting the coordinate points. Thus
\[
|Q_r(t)|=r+t\binom r2,
\qquad
|Q'_r(t)|=t\binom r2.
\]
In rank at least three, the two families $Q_r(t)$, $t\ge1$,
and $Q'_r(t)$, $t\ge2$, together with the fifteen exceptional
arrangements $G_{23},\ldots,G_{37}$, exhaust the irreducible
reflection arrangements. The exceptional ranks range from
three to eight. This follows from the classification of
Shephard and Todd~\cite[Table~VII]{ShephardTodd}.

Two general facts supply most of the sharpness statements
below. If $P=P_W$ has $n$ points, then
\[
M_j(P)<\frac{j+1}{r}n
\qquad(0\le j<r-1),
\]
and every point satisfies
\begin{equation}\label{eq:reflectiondegree}
\mathrm d_P(p)=1+\frac{r-2}{r}n.
\end{equation}
These are de Borbon--Panov
\cite[Lemmas~7.25 and~7.26]{deBorbonPanovMY};
the second is the hyperplane-restriction formula of
Orlik and Terao~\cite[Theorem~6.97]{OrlikTerao}.

Equation~\eqref{eq:reflectiondegree} shows that every
irreducible reflection configuration attains the total-degree
bound in Theorem~\ref{thm:introdegree}.
Letting $t$ grow in either Dowling family gives arbitrarily
large equality examples in every fixed dimension $d\ge2$.
For ordinary degree, counting the other points on lines
through $p$ gives
\begin{equation}\label{eq:reflectionordinarydegree}
o_P(p)
=
n+3-\frac{4n}{r}
+
\sum_{\substack{L\ni p\\s_L\ge4}}(s_L-3).
\end{equation}
Consequently, the maximum ordinary degree equals the theorem's
lower bound precisely when every spanned line contains at
most three points.

For the Dowling configurations, the possible line sizes follow
directly from their defining vectors. For the exceptional
configurations, the corresponding information follows from
the tabulated intersection types; see
Muraleedaran and Taylor~\cite[Tables~4--14]{MuraleedaranTaylor}.
Here a line of $s$ normal points corresponds to a
codimension-two intersection contained in $s$ reflecting
hyperplanes. Consequently, among irreducible reflection
configurations of rank at least three, every spanned line
contains at most three points precisely for
\[
Q_r(1),\qquad Q'_r(2),\qquad Q'_r(3),
\]
and
\[
G_{33},\quad G_{34},\quad
G_{35}=E_6,\quad G_{36}=E_7,\quad G_{37}=E_8.
\]
Thus the ordinary-degree bound is also attained in every
dimension $d\ge2$.

The same general facts give a short proof that the positivity
threshold in Theorem~\ref{thm:intromany} is sharp.
Fix $d\ge2$, $0\le k<d$, and
\[
1\le C<\frac{2d}{k+2}.
\]
Put $s=k+2$. Take an affine realization of $Q_s(t)$, with
$m$ points, and add $N=d-k-1$ points whose homogeneous
vectors are independent modulo the span of the core.
The resulting set has affine dimension $d$ and $n=m+N$
points. Every line involving an added point is ordinary,
so each core point has degree
\[
n+1-\frac{2m}{s}.
\]
Since $m/n\to1$, no core point has degree at least
$(1-C/d)n$ once $t$ is sufficiently large. The
nonconcentration inequality for the core implies that every
$k$-flat misses at least $m/s$ core points. Thus the proportion
of high-degree points outside any such flat is at most
\[
\frac{Ns}{m}\longrightarrow0.
\]
This proves sharpness of the positivity threshold for every
$k$, without asserting sharpness of the guaranteed proportion
above that threshold.

Summing \eqref{eq:reflectiondegree} gives
\[
\sum_{L\in\cL(P)}s_L
=
\frac{r-2}{r}n^2+n.
\]
Thus every irreducible reflection configuration satisfies
the strict nonconcentration hypothesis and attains equality
in Corollary~\ref{cor:nonconcentratedincidences}, as well
as in the equivalent quadratic estimate
\eqref{eq:nonconcentratedcost}.

Summing \eqref{eq:reflectionordinarydegree} similarly gives
\[
t_2(P)
=
\frac{r-4}{2r}n^2+\frac32n
+
\frac12
\sum_{\substack{L\in\cL(P)\\s_L\ge4}}s_L(s_L-3).
\]
For $d\ge3$, Corollary~\ref{cor:introordinary}
is therefore an equality precisely for the configurations
listed above with all line sizes at most three.
The jointless family also gives asymptotic examples.
For $t\ge3$, its coordinate lines contain $t$ points and
all other lines contain two or three points. Substitution
in the preceding identity yields
\[
t_2\bigl(Q'_{d+1}(t)\bigr)
=
\left(
\frac{d-3}{2(d+1)}
+\frac{1}{d(d+1)}
\right)n^2.
\]
Hence, for fixed $d$, the coefficient of $n^2$ in the
corollary can be improved by at most $1/[d(d+1)]$.
For $r=4$ and $t\ge3$, the standard description of Dowling
flats~\cite{Dowling} also gives
\[
M_2\bigl(Q'_4(t)\bigr)=3t=\frac n2.
\]

For $d\ge3$, the proof of
Corollary~\ref{cor:nonconcentratedaverage} has equality in its
exact bound precisely when the incidence bound is an equality
and every line contains two or three points. Its reflection
equality examples are therefore exactly the same as those
for Corollary~\ref{cor:introordinary}.
In dimension two, the first branch of the bound is attained
by $A_3=Q_3(1)$ and the dual Hesse configuration $Q'_3(3)$,
with six and nine points. The second branch is attained
by $Q'_3(4)$ and the Klein configuration $G_{24}$, with
twelve and twenty-one points. The required line multiplicities
for the latter examples are recorded by
Pokora~\cite[Remark~2.5]{PokoraOrbifold}.

Among irreducible reflection configurations, the largest
average line lengths in ranks three and four are
\[
\overline s(P_{G_{27}})=\frac{240}{67}\approx3.582,
\qquad
\overline s(P_{G_{29}})=\frac{84}{31}\approx2.710.
\]
The first is attained by Wiman's configuration, with
forty-five points, and the second by $G_{29}$, with forty
points. These comparisons follow from the two infinite
families and the exceptional intersection data in
\cite[Tables~4--9]{MuraleedaranTaylor}.
Wiman's configuration is also the largest rank-three example
known to Campbell, Geelen, and Kroeker
\cite[Section~2]{CGK}.

The asymptotic average can be obtained without enumerating
individual line types. The Whitney-number formula for
Dowling lattices gives
\[
|\cL(Q_r(t))|
=
\left(\binom r3+3\binom r4\right)t^2+O_r(t);
\]
see Merca~\cite[Corollary~3]{MercaWhitney}.
Deleting the $r$ coordinate points changes this count by
only $O_r(t)$, since each deleted point lies on $O_r(t)$
lines. Combining this with the incidence identity gives,
for either Dowling family,
\[
\lim_{t\to\infty}\overline s
=
2+\frac{4}{3d-2}.
\]
If $f(d)$ denotes the optimal asymptotic correction in a
bound
\[
\overline s\le2+f(d)+o_{n\to\infty}(1)
\]
under uniform nonconcentration, then
Corollary~\ref{cor:nonconcentratedaverage} implies
\[
\frac{4}{3d-2}\le f(d)\le\frac{4}{3d-5}
\qquad(d\ge3).
\]
In particular,
\[
f(d)=\frac{4}{3d}+O(d^{-2}).
\]
For $d=2$, the Dowling families approach average line length
three.

Finally, reflection configurations give equality examples
for the excluded-line-minor bound in
Theorem~\ref{thm:introminor}. The cyclic Dowling geometry $Q_r(t)$
has no $U_{2,t+3}$-minor and has $r+t\binom r2$ points,
so it attains equality for $\ell=t+3$.
These standard properties are recalled by
Geelen, Nelson, and Walsh~\cite[Introduction]{GNW}.

In rank three, every irreducible reflection configuration
attains the excluded-line-minor bound. Indeed, every point
has degree $D=n/3+1$, so every simplified point contraction
has $D$ points. Every rank-two restriction embeds into a
point contraction, by contracting a point outside its span.
Hence the configuration has no $U_{2,D+1}$-minor and attains
equality with $\ell=D+1=n/3+2$.

In rank four, $G_{32}$ supplies a further example.
Its configuration has forty points, and every simplified
point contraction is isomorphic to the twenty-one-point
configuration of $G_{26}$. This is the
arrangement-restriction identification recorded by
Amend, Hoge, and R\"ohrle
\cite[Lemma~3.3(vii) and Table~3]{AmendHogeRoehrle},
following Orlik and Terao.
The rank-three argument therefore shows that its largest
simple rank-two minor has eight points. Hence it excludes
$U_{2,9}$ and attains
\[
40=4+(9-3)\binom42.
\]

\section{Matroid density bound and matrix column bounds}\label{sec:minors}

In this section we separately consider $\Delta$-submodular matrices as well as the more general class of (complex) matrices of bounded circuit-imbalance. In both cases, our density bounds are derived using Theorem~\ref{thm:introminor}, our density bound for $U_{2,\ell}$-minor-free complex-representable matroids, which we restate here for convenience.

\begin{theorem}\label{thm:minor}
Let $M$ be a simple rank-$r$ complex-representable matroid. For any integer $\ell \geq 3$, if $M$ has no $U_{2,\ell}$-minor, then $|E(M)| \leq r + (\ell - 3)\binom r2$. 
\end{theorem}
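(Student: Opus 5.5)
We argue by induction on $r$, contracting a point of large line degree supplied by Theorem~\ref{thm:dirac}(i). For $r\le 2$ the bound is immediate. For $r=1$, a simple matroid has one element. For $r=2$, a simple rank-two matroid with no $U_{2,\ell}$-minor has at most $\ell-1=2+(\ell-3)$ elements, since it is itself $U_{2,|E|}$.

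Now let $r\ge3$, and represent $M$ by pairwise nonparallel nonzero vectors $v_1,\dots,v_n\in\C^r$ spanning $\C^r$, with $n=|E(M)|$. After a generic linear change of coordinates, no $v_i$ lies in the hyperplane $\{x_r=0\}$, so we may rescale each $v_i$ to the form $(p_i,1)$. Since the vectors are pairwise nonparallel, the points $p_i\in\C^{r-1}$ are distinct. They have affine dimension $d=r-1\ge2$, and the lines spanned by the $p_i$ correspond exactly to the rank-two flats of $M$ containing at least two elements.

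By Theorem~\ref{thm:dirac}(i) there is a point $p=p_i$ with $\mathrm d_P(p)\ge n+1-2n/r$. Consider the contraction $M/e_i$, where $e_i$ is the element represented by $v_i$, and simplify it. The parallel classes of $M/e_i$ are in bijection with the lines through $p$. Hence $\operatorname{si}(M/e_i)$ is a simple complex-representable matroid of rank $r-1$ with exactly $\mathrm d_P(p)$ elements. It is a minor of $M$, so it has no $U_{2,\ell}$-minor, and the induction hypothesis gives
\[
n+1-\frac{2n}{r}\le \mathrm d_P(p)\le (r-1)+(\ell-3)\binom{r-1}2 .
\]
Rearranging, $n(r-2)/r\le r-2+(\ell-3)\binom{r-1}{2}$. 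Multiplying by $r/(r-2)$, and using $r\binom{r-1}2/(r-2)=\binom r2\cdot\frac{r-1}{r-2}$, gives
\[
n\le r+(\ell-3)\binom r2\frac{r-1}{r-2}.
\]

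This is weaker than needed, so the plain degree bound does not close the induction. The deficit must be handled more carefully, and this is the main obstacle.

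The fix is to also count the lines of size at least three. Each such line through $p$ of size $s$ becomes, after contraction, a single point. This suggests using the identity $n-1=\sum_{L\ni p}(s_L-1)$ together with $\ell-1\ge s_L$. A line of $M$ with $s_L\ge\ell$ points would itself be a $U_{2,\ell}$-restriction, so every line has $s_L\le \ell-1$.

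I would try to exploit this by returning to Lemma~\ref{lem:common} with $k=d-1$ and averaging over $p\in P\del F$. The degree sum is $\sum_{p\notin F}\mathrm d_P(p)$. Splitting the sum by line sizes $s_L\le \ell-1$, I would aim to show that some $p\notin F$ has
\[
\mathrm d_P(p)\ge \frac{r-2}{r}n+1,
\]
or alternatively that some contraction has enough points relative to $n$.

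If this still only recovers the same estimate, the cleaner route is to choose $p$ by averaging. We have $\sum_p \mathrm d_P(p)=\sum_L s_L$. Lemma~\ref{lem:common} bounds $\sum_L s_L(s_L-2)$, and $s_L\le\ell-1$ lets us convert this into a lower bound on $\sum_L s_L$. Some point then has degree at least the average, and this average is compared against the inductive bound $(r-1)+(\ell-3)\binom{r-1}{2}$ applied to every contraction simultaneously.

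Concretely, I would seek the inequality
\[
n\bigl(r-1+(\ell-3)\tbinom{r-1}2\bigr)\ge\sum_p \mathrm d_P(p),
\]
and combine it with the pair count $\sum_L s_L(s_L-1)=n(n-1)$ and the bound $s_L\le \ell-1$. The flat $F$ must also be handled: if $F$ is nonempty, one restricts to $F$, applies induction there, and counts the remaining points separately. I expect this bookkeeping to be where the sharp constant is won or lost.
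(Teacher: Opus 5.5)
Your first three paragraphs already form a complete proof, and it is the paper's proof: contract a point of degree at least $n+1-2n/r$ given by Theorem~\ref{thm:dirac}(i), simplify, and apply induction. You abandon it only because of an arithmetic slip. The identity $r\binom{r-1}{2}/(r-2)=\binom r2\cdot\frac{r-1}{r-2}$ is false. Since $\binom{r-1}{2}=(r-1)(r-2)/2$,
\[
\frac{r}{r-2}\binom{r-1}{2}=\frac{r(r-1)}{2}=\binom r2 .
\]
So multiplying your (correct) inequality $n(r-2)/r\le r-2+(\ell-3)\binom{r-1}{2}$ by $r/(r-2)>0$ gives exactly $n\le r+(\ell-3)\binom r2$. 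The induction closes with no further input.

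The surrounding steps are fine: the base cases, and the affine chart giving $n$ distinct points of affine dimension $r-1$. So are the bijection between parallel classes of $M/e_i$ and spanned lines through $p$, and the closure of the excluded-minor condition under taking minors. The case $\ell=3$ needs no separate treatment, since the inductive step yields $n\le r$.

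The real defect of the proposal as written is that it declares a working argument insufficient and then ends with an unexecuted plan. Everything from ``This is weaker than needed'' onward should be deleted.

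The averaging route you sketch would not have worked anyway. Lemma~\ref{lem:common} only controls lines not contained in the exceptional flat $F$, so it bounds the average deficit over $P\setminus F$, not over all of $P$. In general the average degree can be far below $n+1-2n/r$ even for matroids excluding a line minor. For example, a near-pencil of $n$ points in rank three has average degree $3(n-1)/n<3$ and has no $U_{2,n}$-minor, while $n+1-2n/3=n/3+1$. What the induction needs is a single point of large degree, which is exactly what Theorem~\ref{thm:dirac}(i) supplies.
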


\begin{proof}
Clearly we may assume that $r \geq 2$. Moreover, if $M$ is $U_{2,3}$-minor-free, then $M$ is a free matroid; so we may assume that $\ell \geq 4$. Now we proceed by induction on $r \geq 2$. Throughout, let $n$ denote the number of elements in $M$. If $r = 2$, then $n \leq \ell - 1 = r + (\ell - 3)\binom r2$. Now let $r \geq 3$. By Theorem~\ref{thm:introdegree}, there exists $e \in E(M)$ contained in at least $n + 1 - \frac{2n}{r}$ lines of $M$. Let $N$ be a simplification of $M/e$; that is, $N$ is obtained from $M/e$ by removing all but exactly one element from each set elements in parallel (so the elements of $N$ correspond exactly to the lines of $M$ through $e$). Then, by induction,
\begin{align*}
    n + 1 - \frac{2n}{r} \leq |E(N)| \leq r-1 + (\ell -3)\binom{r-1}{2},
\end{align*}
and the desired bound follows by rearranging.
\end{proof}

\subsection{Column bounds for \texorpdfstring{$\Delta$}{Delta}-submodular matrices}\label{subsec:delta}

Let $A\in\Z^{r\times n}$ have full row rank $r\ge1$ and
$\Delta_A\coloneq\max_{|B|=r}|\det A_B|\le\Delta$, where $\Delta\ge1$
is an integer. This is called $\Delta$-modular in some references
and $\Delta$-submodular in others. It only bounds maximal minors;
no bound on smaller minors is assumed. 

Combined with an arithmetic reduction of \cite[Proposition~8.6.1]{GNW}, Theorem~\ref{thm:introminor} immediately implies the following upper bound on the number of columns in a $\Delta$-submodular matrix with nonparallel columns.

\begin{corollary}[Nonparallel integer columns]\label{cor:delta}
Let $A \in \Z^{r \times n}$ be a rank-$r$ $\Delta$-submodular matrix. If the columns are nonzero and pairwise nonparallel, then
\begin{equation}\label{eq:delta}
n\le r+(2\Delta-1)\binom r2.
\end{equation}
More precisely, $2\Delta-1$ may be replaced by $p-1$, where $p$
is any prime greater than $\Delta$.
\end{corollary}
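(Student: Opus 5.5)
The plan is to pass to a matroid over a finite field and then apply Theorem~\ref{thm:minor}. Let $M=M[A]$ be the column matroid of $A$ over $\mathbb Q$. It is simple because the columns are nonzero and pairwise nonparallel, it has rank $r$, and it is complex-representable. So the whole task is to show that $M$ has no $U_{2,p+1}$-minor for any prime $p>\Delta$. Once that is done, Theorem~\ref{thm:minor} with $\ell=p+1$ gives $n\le r+(p-2)\binom r2$, which is even a little stronger than the claimed $p-1$. By Bertrand's postulate there is a prime $p$ with $\Delta<p\le 2\Delta$ (for $\Delta=1$ take $p=2$). This gives coefficient at most $2\Delta-1$, and hence \eqref{eq:delta}.

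The key step is the arithmetic reduction of \cite[Proposition~8.6.1]{GNW}, and I would reconstruct it as follows.

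\textbf{Reduction modulo $p$.} Reduce $A$ modulo a prime $p>\Delta$. Every nonzero $r\times r$ minor has absolute value between $1$ and $\Delta<p$, so it stays nonzero mod $p$, while zero minors stay zero. The bases of the column matroid over $\mathbb F_p$ are therefore exactly the bases of $M$. Hence $M$ is $\mathbb F_p$-representable.

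\textbf{Minor closure.} The class of $\mathbb F_p$-representable matroids is minor-closed. $U_{2,p+2}$ is not $\mathbb F_p$-representable, since $\mathrm{PG}(1,p)$ has only $p+1$ points. So $M$ has no $U_{2,p+2}$-minor.

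\textbf{Improving $p+2$ to $p+1$.} To reach $U_{2,p+1}$, I would use the determinant bound more carefully. Suppose $M$ had a $U_{2,p+1}$-minor, obtained by contracting an independent set $C$ of size $r-2$ and restricting to $p+1$ columns. Then each of these columns gives a nonzero $r\times r$ minor together with $C$. Choose a basis of the lattice so that $C$ occupies the first coordinates, and project onto the quotient $\Z^r/\langle C\rangle_{\mathbb Q}\cap\Z^r\cong\Z^2$. The $p+1$ images are pairwise nonparallel vectors in $\Z^2$, and every $2\times 2$ determinant among them is bounded by $\Delta$ divided by the common index factor. Reducing these projected vectors modulo $p$ gives $p+1$ pairwise nonparallel vectors in $\mathbb F_p^2$, which cover all of $\mathrm{PG}(1,p)$, and one then needs an extra argument (in GNW, a counting of the determinant values) to reach a contradiction.

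\textbf{Main obstacle.} That last step, excluding $U_{2,p+1}$ rather than $U_{2,p+2}$, is where the difficulty lies. If it proves troublesome, I would simply cite \cite[Proposition~8.6.1]{GNW} for the precise statement that $\Delta$-submodular integer matrices have no $U_{2,p+1}$-minor when $p>\Delta$ is prime. The rest is the direct application of Theorem~\ref{thm:minor} and Bertrand's postulate described above.
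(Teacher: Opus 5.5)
Your reduction modulo $p$ and minor-closure steps are exactly the paper's proof, and they already finish it. Your error is an off-by-one in the choice of $\ell$. The statement's coefficient $p-1$ equals $\ell-3$ with $\ell=p+2$, not $\ell=p+1$. So once you know that $M$ has no $U_{2,p+2}$-minor, Theorem~\ref{thm:minor} with $\ell=p+2$ gives $n\le r+(p-1)\binom r2$. A Bertrand prime $\Delta<p\le 2\Delta$ (with $p=2$ when $\Delta=1$) then yields \eqref{eq:delta}. This is all the paper does.

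The step you single out as the main obstacle, excluding $U_{2,p+1}$, is not merely hard but false. It therefore cannot be proved, and cannot be cited from \cite{GNW}.
\begin{itemize}
\item For $\Delta=2$ and $p=3$, take the integer matrix with columns $e_1,e_2,e_1+e_2,e_1-e_2$. Its maximal minors are $\pm1$ except $\det(e_1+e_2,e_1-e_2)=-2$, so it is $2$-submodular. Its columns are pairwise nonparallel, and its column matroid is $U_{2,4}=U_{2,p+1}$.
\item For $\Delta=1$ and $p=2$, the matrix with columns $e_1,e_2,e_1+e_2$ gives $U_{2,3}$.
\item More globally, your target bound $r+(p-2)\binom r2$ would force $n\le r$ for unimodular matrices. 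That contradicts Heller's bound $\binom{r+1}{2}$, which is attained by the network matrix of $K_{r+1}$.
\end{itemize}
Your projected rank-two argument also has no contradiction to reach. Here $p+1$ pairwise nonparallel vectors in $\mathbb F_p^2$ represent $U_{2,p+1}$, which is $\mathbb F_p$-representable. Drop the third step and apply the theorem with $\ell=p+2$.
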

\begin{proof}
For $r=1$, nonzero pairwise nonparallel columns force $n=1$, so the
claim is immediate. Assume $r\ge2$.
Reduction modulo $p>\Delta$ preserves exactly which maximal minors
are nonzero, hence preserves the represented matroid. It is thus
representable over both $\C$ and $\mathbb F_p$. Its simple rank-two
minors have at most $p+1$ elements, so apply
Theorem~\ref{thm:introminor} with $\ell=p+2$. A prime
$\Delta<p\le2\Delta$ exists, including $p=2$ for $\Delta=1$.
\end{proof}

Next, we apply Corollary~\ref{cor:delta} to obtain a general transfer lemma from bounds on nonparallel columns to bounds on distinct columns.
The proof groups the nonzero columns, initially taken up to sign, by
their \emph{content}, the greatest common divisor of their entries.
Within a group of content $q$, dividing every column by $q$ produces
primitive, pairwise nonparallel columns, so
Corollary~\ref{cor:delta} applies. The key saving is that if this group
has rank $s$, any basis chosen from it contributes a factor $q^s$
to a full determinant. After division, the effective determinant
bound is therefore at most $\Delta/q^s$. For $q>1$, this exponential
saving in the rank controls the quadratic rank dependence in the
preceding corollary: summing the bound from Corollary~\ref{cor:delta} over all such groups gives at most
$3(\Delta-1)$ columns, independently of the ambient rank. Any given bound for nonparallel columns can be used to control the primitive columns with content $1$, which supply the main term of the output bound for nonzero columns, distinct up to sign.
Finally, allowing both signs doubles the bound,
and allowing a zero column adds one.

\begin{lemma}[From nonparallel to distinct columns]
\label{lem:distincttransfer}
Fix integers $r,\Delta\ge1$, and let $F(r,\Delta)$ be an upper
bound for the number of nonzero pairwise nonparallel columns
in a rank-$r$, $\Delta$-submodular integer matrix.
Let $A\in\Z^{r\times n}$ be a rank-$r$ $\Delta$-submodular matrix.
If its columns are nonzero and distinct up to sign, then
\[
n\le F(r,\Delta)+3(\Delta-1).
\]
If its columns are pairwise distinct, allowing both signs and zero,
then
\[
n\le2F(r,\Delta)+6(\Delta-1)+1.
\]
\end{lemma}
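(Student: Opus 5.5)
We group the columns by content. Replace each column by a representative up to sign; this does not change $n$. For each integer $q\ge1$, let $G_q$ be the set of columns whose content (the gcd of the entries) is $q$. Within $G_q$, the primitive vectors $a/q$ are pairwise nonparallel. Indeed, two parallel primitive integer vectors are equal up to sign, and the columns are distinct up to sign. The bound $n\le\sum_q|G_q|$ is then split into a main term $|G_1|$, to be bounded by $F(r,\Delta)$, and an error term $\sum_{q\ge2}|G_q|$, to be bounded by $3(\Delta-1)$. The second statement follows at once: columns pairwise distinct with both signs allowed form at most two columns per class up to sign, plus possibly one zero column. This gives $2F+6(\Delta-1)+1$.

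For the main term, we extend $G_1$ to a rank-$r$ matrix without creating parallel pairs. Choose columns $c_1,\dots,c_m$ of $A$ outside $G_1$ that complete $\spanop G_1$ to $\Q^r$ (all columns of $A$ are integer vectors, so spans are taken over $\Q$). Then adjoin the primitive parts $c_j/\mathrm{content}(c_j)$. None of these is parallel to a column of $G_1$ or to each other, since they are independent modulo $\spanop G_1$. Each maximal minor of the new matrix divides the corresponding minor of $A$, because dividing a column by a positive integer divides the determinant by it. Hence the new matrix is rank-$r$, $\Delta$-submodular, with nonzero pairwise nonparallel columns, and $|G_1|\le F(r,\Delta)$.

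For $q\ge2$, let $s=\rank G_q\ge1$, and choose columns $C$ of $A$ completing a basis of $G_q$ to a basis of $\Q^r$. For any $s$ columns $B\subseteq G_q$ spanning $\spanop G_q$ we have $|\det[B,C]|=q^s|\det[B/q,C]|\le\Delta$. Since $\det[B/q,C]$ is a nonzero integer, this forces $q^s\le\Delta$. We then pass to the quotient lattice. Let $\Lambda\coloneq\Z^r\cap\spanop C$; this lattice is saturated. Fix a $\Z$-basis of $\Z^r/\Lambda\cong\Z^{s}$, and write $\bar B$ for the image of $B/q$. Then $\det[B/q,C]=\pm\det(\bar B)\cdot[\Lambda:\Z C]$. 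It follows that the images of the columns $G_q/q$ form an integer $s\times|G_q|$ matrix of rank $s$ whose maximal minors have absolute value at most $\Delta_q\coloneq\lfloor\Delta/q^s\rfloor$. Its columns are pairwise nonparallel, because $G_q/q$ had that property and parallelism in the quotient would put a difference of two of them in $\spanop C$, which meets $\spanop G_q$ only in $0$. Corollary~\ref{cor:delta} then gives
\[
|G_q|\le s+(2\Delta_q-1)\binom s2,\qquad q^s\le\Delta.
\]

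The main obstacle is the final summation: we must show that the maximum of $\sum_{q\ge2}\bigl(s_q+(2\lfloor\Delta/q^{s_q}\rfloor-1)\binom{s_q}2\bigr)$, over admissible choices of $s_q$ with $q^{s_q}\le\Delta$, is at most $3(\Delta-1)$. For $s=1$ the term is $1$, which is at most $(\Delta-1)/(q-1)$-type savings only when summed cleverly, so the estimate has to be organized with care. I would first bound each term by a multiple of $\Delta/q^{s}$ times a function of $s$. Using $\Delta_q\le\Delta/q^s$ and $s\le\log_q\Delta$, one gets $|G_q|\lesssim s+s^2\Delta/q^s$. I would then sum over $2\le q\le\Delta$, with $s$ chosen worst-case for each $q$. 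The terms with $s=1$ contribute at most the number of $q$ in $[2,\Delta]$, namely $\Delta-1$. For $s\ge2$ one needs $\sum_q s^2\Delta/q^s\le2(\Delta-1)$, which resembles $\Delta\sum_{q\ge2}\bigl(4/q^2+9/q^3+\cdots\bigr)$ and is delicate. If this crude summation fails to give exactly $3$, I would sharpen it. One way is to use $\binom s2$ in place of $s^2$, together with the floor, which kills all $q$ with $q^s>\Delta/1$. Another is a direct optimization showing that the worst case is attained with $s_q\in\{1,2\}$.
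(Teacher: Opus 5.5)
Your structural reductions are correct and essentially match the paper's. Bounding $|G_1|$ by adjoining primitive parts of complementary columns is a harmless variant of the paper's count of all primitive directions $N\le F(r,\Delta)$. Your quotient by the saturated lattice $\Z^r\cap\spanop C$ does the same job as the paper's unimodular row reduction: it produces a rank-$s$ integer matrix with nonzero pairwise nonparallel columns and maximal minors at most $\lfloor\Delta/q^s\rfloor$. Hence $|G_q|\le s+(2\lfloor\Delta/q^s\rfloor-1)\binom s2$ with $q^s\le\Delta$.

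The genuine gap is the step you yourself flag: the summation to $3(\Delta-1)$ is never carried out, and the two shortcuts you propose do not work as stated. Replacing $\binom s2$ by $s^2/2$ is too lossy. At $q=2$, $s=3$ the term $s^2\Delta/q^s$ is already $9\Delta/8$. Summing the worst admissible $s$ over $q$ gives about $(9/8+4\sum_{q\ge3}q^{-2})\Delta\approx2.7\Delta$, which exceeds $2(\Delta-1)$ for large $\Delta$. The claim that the worst case has $s_q\in\{1,2\}$ is false: for $q=2$ and large $\Delta$, $s=3$ allows about $3\Delta/4$ columns, whereas $s=2$ allows only about $\Delta/2$.

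The missing idea is a per-layer bound that is uniform in $s$. Split
$s+(2\Delta_q-1)\binom s2=(s-\binom s2)+2\Delta_q\binom s2$.
Since $s-\binom s2\le1$ for all $s\ge1$, each $q$ contributes one additive unit. Expanding $q^{s-1}=(1+(q-1))^{s-1}$ and keeping the terms of degree one and two gives $\binom s2(q-1)\le q^{s-1}$. Therefore
\[
|G_q|\le1+\frac{2\Delta}{q^s}\binom s2\le1+\frac{2\Delta}{q(q-1)}
\qquad(2\le q\le\Delta),
\]
independently of $s$, and trivially for empty layers. The telescoping identity $\sum_{q=2}^{\Delta}\frac1{q(q-1)}=1-\frac1\Delta$ then gives $\sum_{q=2}^{\Delta}|G_q|\le(\Delta-1)+2(\Delta-1)=3(\Delta-1)$. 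This is exactly the paper's computation; with it inserted, your proof is complete.
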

\begin{proof}
First assume that the columns are nonzero and distinct up to sign.
Divide each column by its content and retain one primitive
representative from each direction. The resulting integer matrix
has rank $r$ and maximal minors bounded by $\Delta$, since each
determinant is an original determinant divided by a product of
contents. Thus the number $N$ of directions satisfies
$N\le F(r,\Delta)$.

Each direction contains at most one original primitive column.
Consequently,
\[
n-N\le\sum_{q=2}^{\Delta}m_q,
\]
where $m_q$ counts the columns of content $q$. The upper limit is
valid because every nonzero column extends to a basis, whose
determinant is divisible by its content.

Choose $s$ linearly independent columns of the layer and let $C$
be the matrix they form. Applying
Schrijver~\cite[Corollary~4.3b]{Schrijver} to $C^T$ and transposing
gives a unimodular integer matrix $U$ such that $UC$ has zero
last $r-s$ rows and a nonsingular block in its first $s$ rows.
Thus $U$ sends $V$ to the first $s$ coordinate directions.
Moreover, $U$ preserves contents: since both $U$ and $U^{-1}$
are integral, an integer divides every entry of a column $a$
if and only if it divides every entry of $Ua$.

Consider a nonempty content-$q$ layer of rank $s$.
A unimodular integer row change sends this layer to the first $s$ coordinate directions.
In detail, choose $s$ linearly independent columns of the layer and let $C$
be the matrix they form. Applying
Schrijver~\cite[Corollary~4.3b]{Schrijver} to $C^T$ and transposing
gives a unimodular integer matrix $U$ such that $UC$ has zero
last $r-s$ rows and a nonsingular block in its first $s$ rows.
Thus $U$ sends $V$ to the first $s$ coordinate directions.
Moreover, $U$ preserves contents: since both $U$ and $U^{-1}$
are integral, an integer divides every entry of a column $a$
if and only if it divides every entry of $Ua$.

Deleting the zero
rows and dividing by $q$ gives a rank-$s$ integer matrix $B_q$
with primitive, pairwise nonparallel columns.
Choose $r-s$ original columns complementary to the layer's span.
For every $s$-column basis $(B_q)_I$, the corresponding original
determinant has the form
\[
q^s\det((B_q)_I)\det H,
\]
where $H$ is a fixed nonsingular integer matrix, with $\det H=1$
when $s=r$. Hence the maximal minors of $B_q$ are bounded by
$\lfloor\Delta/q^s\rfloor$.

Corollary~\ref{cor:delta} gives
\[
m_q\le
s+\left(2\left\lfloor\frac{\Delta}{q^s}\right\rfloor-1\right)
\binom s2
\le1+\frac{\Delta s(s-1)}{q^s}
\le1+\frac{2\Delta}{q(q-1)}.
\]
Here $s-\binom s2\le1$, and
$\binom s2(q-1)\le q^{s-1}$ follows by expanding
$(1+(q-1))^{s-1}$.
Summing over $2\le q\le\Delta$, including empty layers, yields
\[
\sum_{q=2}^{\Delta}m_q
\le(\Delta-1)+2\Delta\sum_{q=2}^{\Delta}\frac1{q(q-1)}
=3(\Delta-1).
\]
This proves the first assertion. For arbitrary distinct columns,
retain one representative from each pair $\{a,-a\}$ and omit zero.
Doubling the first bound and adding one proves the second.
\end{proof}

Applying Lemma~\ref{lem:distincttransfer} with Corollary~\ref{cor:delta} gives the following corollary. Its first assertion restates Theorem~\ref{thm:introcolumns}; the second allows both signs and a zero column.

\begin{corollary}[Distinct integer columns]\label{cor:distinct}
Let $A \in \Z^{r \times n}$ be a rank-$r$ $\Delta$-submodular matrix. If the columns of $A$ are nonzero and distinct up to sign, then
\begin{equation}\label{eq:differing}
n\le(2\Delta-1)\binom{r+1}{2}.
\end{equation}
If they are merely pairwise distinct, allowing both signs and a zero
column, then
\begin{equation}\label{eq:distinct}
n\le2(2\Delta-1)\binom{r+1}{2}+1=O(r^2\Delta).
\end{equation}
\end{corollary}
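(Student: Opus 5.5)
Corollary~\ref{cor:distinct} follows by combining Lemma~\ref{lem:distincttransfer} with the bound of Corollary~\ref{cor:delta}. Set $F(r,\Delta)\coloneq r+(2\Delta-1)\binom r2$, which Corollary~\ref{cor:delta} shows is a valid upper bound for nonzero pairwise nonparallel columns of a rank-$r$ $\Delta$-submodular integer matrix. Lemma~\ref{lem:distincttransfer} then bounds the number of nonzero columns distinct up to sign by
\[
n\le r+(2\Delta-1)\binom r2+3(\Delta-1).
\]
It remains to check that this is at most $(2\Delta-1)\binom{r+1}2$. Since $\binom{r+1}2=\binom r2+r$, this amounts to
\[
r+3(\Delta-1)\le(2\Delta-1)r,
\]
that is, $3(\Delta-1)\le 2(\Delta-1)r$. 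This holds whenever $r\ge2$, and also whenever $\Delta=1$.

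The main obstacle is the case $r=1$ with $\Delta\ge2$, where the inequality above fails. I would treat it directly. A rank-one $\Delta$-submodular matrix is a row of integers of absolute value at most $\Delta$. If the entries are nonzero and distinct up to sign, there are at most $\Delta$ of them. This equals $(2\Delta-1)\binom22$ when $\Delta=1$ and is smaller when $\Delta\ge2$, so \eqref{eq:differing} holds for $r=1$ as well.

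For \eqref{eq:distinct}, take pairwise distinct columns, possibly including both signs and a zero column. Discard the zero column, if present, and keep one representative from each pair $\{a,-a\}$. The remaining matrix has nonzero columns distinct up to sign. It still has rank $r$, since only zero or duplicate-up-to-sign columns were removed, and it is still $\Delta$-submodular, since its maximal minors are among those of $A$. By \eqref{eq:differing} it has at most $(2\Delta-1)\binom{r+1}2$ columns. Each kept representative accounts for at most two original columns, and the zero column adds at most one more, which gives \eqref{eq:distinct}. The estimate $O(r^2\Delta)$ is then immediate.
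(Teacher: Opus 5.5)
Your proof is correct and matches the paper's. For $r\ge2$ you apply Lemma~\ref{lem:distincttransfer} with $F(r,\Delta)=r+(2\Delta-1)\binom r2$, where the slack is $(\Delta-1)(2r-3)\ge0$; you treat $r=1$ directly via the at-most-$\Delta$ count; and you obtain \eqref{eq:distinct} by discarding the zero column, keeping one column from each sign pair, doubling, and adding one.
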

\begin{proof}
First assume that the columns are nonzero and distinct up to sign.
For $r=1$, there are at most $\Delta$ columns, so the assertion
holds. For $r\ge2$, apply Lemma~\ref{lem:distincttransfer} with
the bound from Corollary~\ref{cor:delta},
\[
F(r,\Delta)\coloneq r+(2\Delta-1)\binom r2.
\]
This gives
\[
n\le r+(2\Delta-1)\binom r2+3(\Delta-1)
\le(2\Delta-1)\binom{r+1}{2},
\]
where the difference in the last inequality is
$(\Delta-1)(2r-3)\ge0$. This proves \eqref{eq:differing}.
For arbitrary distinct columns, retaining one representative from
each sign pair and omitting zero, then doubling the bound and
adding one, proves \eqref{eq:distinct}.
\end{proof}

\subsection{Column bounds from circuit imbalance}\label{sec:circuitimbalance}

For a real or complex matrix $A$, a \emph{circuit vector} is a
nonzero vector in $\ker A$ with inclusion-minimal support. Define
\[
\kappa_A\coloneq\max\left\{
\left|\frac{g_i}{g_j}\right|:
g\text{ is a circuit vector},\ i,j\in\operatorname{supp}g
\right\}.
\]
For a matrix without dependencies, set $\kappa_A\coloneq 1$. For full row
rank, the equivalent determinant formula is
\begin{equation}\label{eq:kappadet}
\kappa_A=\max\left\{1,
\left|\frac{\det A_{B-i+j}}{\det A_B}\right|:
A_B\text{ invertible},\ i\in B,\ j\notin B
\right\}.
\end{equation}
This standard formula is also stated in \cite[Introduction]{DEPRS}.
It follows from Cramer's rule and extension of a circuit minus one
element to a basis. The determinants in a ratio differ by \emph{one}
column; this is not the ratio of the largest to smallest arbitrary
nonzero maximal minors. Invertible row operations preserve $\kappa$;
arbitrary individual column rescalings need not.

For a full-row-rank integer matrix $A$, every nonzero maximal
minor has absolute value at least one, so \eqref{eq:kappadet} gives
$\kappa_A\le\Delta_A\le\Delta$ whenever $A$ is $\Delta$-submodular. Hence, column bounds for $\Delta$-submodular matrices follow immediately from corresponding bounds for real matrices with bounded circuit imbalance.
Conversely, bounded circuit imbalance does not bound $\Delta_A$ for a given integer representation: multiplying a rank-$r$
integer matrix by a positive integer $q$ preserves $\kappa_A$ but
multiplies $\Delta_A$ by $q^r$.

We use the reduction from circuit imbalance to excluded line minors
in \cite[Section~3.3]{DEPRS}.
We include all details for the reader's convenience.
We first prove a column bound for rank two, which slightly improves the corresponding bound of \cite[Lemma 3.4]{DEPRS}.
For higher rank, we use the rank two bound to show that there is no $U_{2,\ell}$ minor for $\ell$ sufficiently large.
This enables us to apply Theorem~\ref{thm:introminor} to bound the number of columns.

The following monotonicity is \cite[Proposition~3.17]{ENV}, invoked
as \cite[Lemma~3.3]{DEPRS}. We give a direct circuit-lifting proof
valid over both $\R$ and $\C$.

\begin{lemma}[Minor monotonicity]\label{lem:kappaminor}
Deletion and contraction admit matrix representations whose circuit
imbalance does not exceed that of the original matrix. Simplification
can be performed by deletion, retaining the inherited column scalings.
\end{lemma}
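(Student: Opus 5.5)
Throughout, $A$ is a full-row-rank $r\times n$ matrix over $\R$ or $\C$, and I work with the determinant formula \eqref{eq:kappadet} and with circuit vectors directly.

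\textbf{Deletion.} Delete a column $e$. If $e$ is not a coloop, the rank is unchanged and the circuits of $A\del e$ are exactly the circuits of $A$ that avoid $e$, with the same circuit vectors restricted to the remaining coordinates. Every ratio $|g_i/g_j|$ for $A\del e$ therefore already occurs for $A$, so $\kappa_{A\del e}\le\kappa_A$. If $e$ is a coloop, I first apply an invertible row operation so that $e$ becomes a standard basis vector $e_k$. Row operations preserve $\kappa$. The remaining columns then have zero $k$-th entry, since otherwise $e$ would lie in a circuit. I drop that row to regain full row rank; the kernel, and hence the circuits, are unchanged.

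\textbf{Contraction.} Contract a non-loop column $e$. I use row operations so that $e=e_1$. Deleting the first row and the column $e$ represents $A/e$; call the resulting matrix $A'$. A vector $g'$ lies in $\ker A'$ exactly when it extends to some $g\in\ker A$ by choosing the $e$-coordinate $g_e=-(\text{row 1 of }A)\cdot g'$. The key step is to show that every circuit vector $g'$ of $A'$ lifts to a circuit vector of $A$. The support of the lift is $\operatorname{supp} g'$ or $\operatorname{supp} g'\cup\{e\}$. Suppose some nonzero $h\in\ker A$ had strictly smaller support. Its restriction $h'$ away from $e$ lies in $\ker A'$. If $h'\neq0$, minimality of $g'$ forces $\operatorname{supp} h'=\operatorname{supp} g'$, so $h'$ is a scalar multiple of $g'$ and $h$ coincides with the corresponding multiple of the lift $g$, a contradiction. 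If $h'=0$, then $h$ is supported on $\{e\}$, making $e$ a loop, which is excluded. So every ratio for $A'$ is a ratio among coordinates of a circuit vector of $A$, and $\kappa_{A'}\le\kappa_A$. If $e$ is a loop, contraction coincides with deletion.

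\textbf{Simplification.} Loops are deleted. For each parallel class I delete all columns but one, keeping its original scaling. This is a sequence of deletions, so $\kappa$ does not increase.

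\textbf{Main obstacle.} The only delicate point is the lifting step in the contraction argument: a minimal-support kernel vector of the quotient must remain minimal after lifting. The proof of this rests on the fact that minimal supports determine circuit vectors up to scaling. Over $\C$ nothing changes, because only linear algebra and absolute values are used.
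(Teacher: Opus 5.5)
Your proof is correct and is essentially the paper's circuit-lifting argument, specialized to contracting a single non-loop element normalized to $e_1$. There the lift of a circuit of $A'$ is unique, so the paper's minimal-support lift and its case $f_J=0$ reduce to your observation that $e$ would be a loop; the paper instead contracts an arbitrary set $C$ in one step, where lifts are not unique. The only slip is in the coloop case of deletion: an arbitrary row operation sending $e$ to $e_k$ need not make the other columns vanish in row $k$, and your appeal to circuits does not show this (take columns $(1,0)^T$ and $(1,1)^T$). You can fix it by choosing the row operation so that it sends the span of the remaining columns onto $\{x_k=0\}$, or drop the step entirely, since deleting a coloop leaves every circuit vector unchanged.
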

\begin{proof}
Deletion only restricts the possible circuit vectors. For contraction
of a set $C$, the kernel of the quotient representation is the
coordinate projection of $\ker A$ onto the remaining index set $J$.
Let $h$ be a circuit of that projected kernel, and choose a lift
$g\in\ker A$ with $g_J=h$ and minimal support among such lifts.

If $g$ is not a circuit, choose a nonzero kernel vector $f$ supported
properly inside its support. If $f_J=0$, subtracting a suitable
multiple of $f$ from $g$ cancels a contracted coordinate and gives
a smaller lift. If $f_J\ne0$, circuit minimality makes $f_J$ a scalar
multiple of $h$ (the dependency space on a minimal support is
one-dimensional), and a rescaling of $f$ is a smaller lift. Both
contradict the choice of $g$. Thus $g$ is a circuit and all nonzero
coefficient ratios in $h$ occur in $g$. Finally, remove loops and
extra parallel representatives by deletion.
\end{proof}

\begin{lemma}[Rank-two column bounds]\label{lem:rank2kappa}
A rank-two matrix with $m$ nonzero pairwise nonparallel columns and
circuit imbalance at most $\kappa$ satisfies
\[
m\le2\pi\kappa\quad\text{over }\R,
\qquad
m\le16\kappa^2\quad\text{over }\C.
\]
\end{lemma}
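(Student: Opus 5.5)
We first normalize the matrix. Take a rank-two matrix $A$ with nonzero, pairwise nonparallel columns $a_1,\ldots,a_m\in\mathbb F^2$, where $\mathbb F\in\{\R,\C\}$. By \eqref{eq:kappadet}, invertible row operations do not change $\kappa_A$. Choose a basis pair $\{1,2\}$ that maximizes $|\det(a_1,a_2)|$, and apply a row operation sending $a_1\mapsto e_1$ and $a_2\mapsto e_2$. Now write every column as $a_j=(x_j,y_j)$. The single-column-exchange ratios with respect to the basis $B=\{1,2\}$ are
\[
\left|\frac{\det A_{B-2+j}}{\det A_B}\right|=|y_j|,\qquad
\left|\frac{\det A_{B-1+j}}{\det A_B}\right|=|x_j|.
\]
Maximality of $|\det A_B|$ gives $|x_j|,|y_j|\le1$. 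Exchanges from other bases then supply the key lower bound: using the basis $\{1,j\}$ and exchanging $j$ for $k$ gives
\[
\frac{|\det(a_1,a_k)|}{|\det(a_1,a_j)|}=\frac{|y_k|}{|y_j|}\le\kappa
\]
whenever both are nonzero. Since $a_2=e_2$ has $y=1$, every column with $y_j\ne0$ satisfies $|y_j|\ge1/\kappa$, and similarly $|x_j|\ge1/\kappa$ when $x_j\ne0$. More generally, $|\det(a_i,a_k)|\le\kappa\,|\det(a_i,a_j)|$ whenever the right-hand side is nonzero. Combined with maximality, this says that all nonzero $2\times2$ minors lie in $[1/\kappa,1]$ after normalization, taking $a_j=e_2$ as the comparison column.

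The main step is to pack the columns as points in projective space. Only $e_1$ and $e_2$ can have a zero coordinate, so every other column can be rescaled to $(1,z_j)$ with $z_j=y_j/x_j$. Such rescaling changes the minors, however, so we keep the original vectors, which satisfy $1/\kappa\le|x_j|,|y_j|\le1$.

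Over $\R$, each column determines a direction angle $\theta_j\in[0,\pi)$, and
\[
|\det(a_i,a_k)|=\|a_i\|\|a_k\||\sin(\theta_i-\theta_k)|.
\]
Norms lie in $[1/\kappa,\sqrt2]$, but we need a sharper separation. Fix $i$ and pick $j$ with $|\det(a_i,a_j)|$ maximal, which is at least of constant order because of the $e_1,e_2$ normalization. The ratio bound then forces $\|a_k\||\sin(\theta_i-\theta_k)|\ge c/\kappa\cdot\|a_i\|^{-1}\cdots$. A cleaner route is as follows. Choose the basis maximizing the determinant as above. Then for each column $k$ use the pair $(k,\text{neighbor})$ together with the exchange against a basis containing $k$. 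This gives $|\sin(\theta_k-\theta_{k'})|\ge \frac{|\det(a_k,e_\ast)|}{\kappa\|a_k\|\|a_{k'}\|}$, which after careful bookkeeping bounds angular gaps below by roughly $1/(2\kappa)$. Consecutive angular gaps sum to $\pi$, so $m\le2\pi\kappa$. Getting exactly the constant $2\pi$ is where the care goes. I would try choosing a maximal-determinant pair and bounding each gap via the exchange ratio in which one of the two extreme columns serves as the reference.

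Over $\C$, we pass to the Fubini--Study metric on $\C P^1\cong S^2$, whose chordal distance is $|\det(a,b)|/(\|a\|\|b\|)$. The same exchange argument gives pairwise chordal distance at least about $1/(2\kappa)$. An area (packing) argument on the sphere of area $\pi$ in this normalization, using disjoint caps of radius $1/(4\kappa)$ each with area about $\pi/(16\kappa^2)$, yields $m\le16\kappa^2$. The main obstacle in both cases is the uniform lower bound on the pairwise normalized determinant in terms of $1/\kappa$. It must be derived from single-exchange ratios only, using a carefully chosen maximal basis to control the norms.
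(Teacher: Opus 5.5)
Your outline follows the paper's proof. You send a maximum-modulus determinant basis to $e_1,e_2$, use single-exchange ratios from \eqref{eq:kappadet}, and pack projective directions on a circle of length $\pi$ over $\R$ and on $\C P^1\cong S^2$ over $\C$. However, the step that carries the whole lemma is never proved: the uniform bound $|\det(v,w)|\ge\|v\|\|w\|/(2\kappa)$ for distinct columns. You defer it to ``careful bookkeeping'' giving ``roughly'' $1/(2\kappa)$, and an approximate separation cannot yield the exact constants $2\pi$ and $16$.

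The intermediate claim you do state is false and cannot fill this step. You assert that after normalization every nonzero $2\times2$ minor lies in $[1/\kappa,1]$. But from a basis $\{i,k\}$ a single exchange generally cannot reach $\{1,2\}$. Exchanges only bound $|\det(a_i,a_k)|$ below by $\max\{|x_i|,|y_i|,|x_k|,|y_k|\}/\kappa$, which can be of order $1/\kappa^2$. Concretely, take the columns $e_1,e_2,(\epsilon,\epsilon),(\epsilon,2\epsilon)$ with $0<\epsilon<1/2$. Then $\{e_1,e_2\}$ is a maximum-determinant basis and \eqref{eq:kappadet} gives $\kappa=2/\epsilon$. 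Yet the minor of the last two columns is $\epsilon^2=4/\kappa^2<1/\kappa$. So the needed bound must be proportional to the norms, not uniform.

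The missing step is one line from the inequality you already wrote. For distinct columns $a_k,a_{k'}$, choose $e_*\in\{e_1,e_2\}$ with $|\det(a_k,e_*)|=\max\{|x_k|,|y_k|\}\ge\|a_k\|/\sqrt2$. The exchange from the basis $\{a_k,a_{k'}\}$ to $\{a_k,e_*\}$ (trivial if $a_{k'}=e_*$) gives $|\det(a_k,a_{k'})|\ge\|a_k\|/(\sqrt2\kappa)$. Since $\|a_{k'}\|\le\sqrt2$, dividing by $\|a_k\|\|a_{k'}\|$ gives exactly $1/(2\kappa)$. The paper obtains the same inequality by bounding the entries of $[v\ w]^{-1}$, which are precisely these exchange ratios.

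With this bound, over $\R$ every angular gap is at least $\arcsin(1/(2\kappa))\ge1/(2\kappa)$, so $m\le2\pi\kappa$. Over $\C$, your packing then works with no approximation. The chordal separation on the sphere of radius $\tfrac12$ is at least $1/(2\kappa)$. A spherical cap of chordal radius $\rho$ has area exactly $2\pi Rh=\pi\rho^2$, so disjoint caps of chordal radius $1/(4\kappa)$ give $m\le16\kappa^2$ directly. The paper instead uses angular caps on the unit sphere. That gives the slightly stronger intermediate bound $2/(1-\sqrt{1-1/(4\kappa^2)})$ before relaxing it to $16\kappa^2$.
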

\begin{proof}
We refine the maximum-determinant normalization and packing argument
of \cite[Lemma~3.4]{DEPRS}. The normalization works over either field.
Choose a maximum-modulus determinant basis and send it to $e_1,e_2$.
Every coordinate of every column is now at most $1$ in modulus,
so every column has norm at most $\sqrt2$. For distinct columns
$v,w$, each entry of $[v\ w]^{-1}$ is at most $\kappa$ in modulus:
its columns express $e_1,e_2$ in the basis $v,w$, so use
\eqref{eq:kappadet}, or the trivial coefficients when an axis is
itself in the pair. The inverse formula implies
\[
|\det(v,w)|\ge\frac{\max\{\|v\|,\|w\|\}}{\sqrt2\kappa},
\qquad
\frac{|\det(v,w)|}{\|v\|\|w\|}\ge\frac1{2\kappa}.
\]
Over $\R$, the projective directions therefore have angular separation
at least $\theta\coloneq\arcsin(1/(2\kappa))$. The projective circle has
length $\pi$, so
\[
m\le\frac\pi{\arcsin(1/(2\kappa))}\le2\pi\kappa.
\]
Over $\C$, for a unit vector $z=(z_1,z_2)$, associate the point
\[
q(z)\coloneq(2\operatorname{Re}(z_1\overline z_2),\ 2\operatorname{Im}(z_1\overline z_2),
|z_1|^2-|z_2|^2)\in S^2.
\]
It depends only on the projective direction and satisfies
$\|q(z)-q(w)\|=2|\det(z,w)|$. Thus our $m$ sphere points have
Euclidean separation at least $1/\kappa$. Spherical caps of angular
radius $\theta\coloneq\arcsin(1/(2\kappa))$ have disjoint interiors.
Comparing their areas with $4\pi$ gives
\[
m\le\frac2{1-\sqrt{1-1/(4\kappa^2)}}\le16\kappa^2.
\]
\end{proof}

Over $\R$, our sharper norm estimates improve $4\pi\kappa$ in
\cite[Lemma~3.4]{DEPRS} to $2\pi\kappa$. Over $\C$, their proof of
Corollary~1.6 uses successive pigeonhole arguments on phases and
coordinate moduli to obtain $O(\kappa^2)$. The sphere representation
above gives an explicit constant by the same projective packing
principle.

\begin{corollary}[Column bounds from circuit imbalance]\label{cor:kappa}
Let $A$ have rank $r\ge2$ and $n$ nonzero pairwise nonparallel
columns. Over $\R$,
\begin{equation}\label{eq:realkappa}
n\le r+\bigl(\lfloor2\pi\kappa_A\rfloor-2\bigr)\binom r2
\le\pi r^2\kappa_A.
\end{equation}
Over $\C$,
\begin{equation}\label{eq:complexkappa}
n\le r+\bigl(\lfloor16\kappa_A^2\rfloor-2\bigr)\binom r2
\le8r^2\kappa_A^2.
\end{equation}
\end{corollary}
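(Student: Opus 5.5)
The plan is to show that $M[A]$ has no $U_{2,\ell}$-minor for a suitable $\ell$ and then apply Theorem~\ref{thm:introminor}. Let $M$ be the matroid of $A$; it is simple because the columns are nonzero and pairwise nonparallel.

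Suppose $M$ has a $U_{2,m}$-minor, say $M/C\setminus D\cong U_{2,m}$. By Lemma~\ref{lem:kappaminor}, this minor has a matrix representation of rank two whose circuit imbalance is at most $\kappa_A$. Simplification is done by deletion and keeps the inherited scalings, so the representation has $m$ nonzero pairwise nonparallel columns. Lemma~\ref{lem:rank2kappa} then gives $m\le\lfloor2\pi\kappa_A\rfloor$ over $\R$ and $m\le\lfloor16\kappa_A^2\rfloor$ over $\C$, where we use that $m$ is an integer.

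Accordingly, set $\ell\coloneq\lfloor2\pi\kappa_A\rfloor+1$ over $\R$ and $\ell\coloneq\lfloor16\kappa_A^2\rfloor+1$ over $\C$. Since $\kappa_A\ge1$, both choices give $\ell\ge7$, so $\ell\ge3$ holds. The matroid $M$ has no $U_{2,\ell}$-minor. Real matrices are complex-representable, so Theorem~\ref{thm:introminor} applies and yields $n\le r+(\ell-3)\binom r2$. Substituting $\ell$ gives exactly the first inequalities in \eqref{eq:realkappa} and \eqref{eq:complexkappa}.

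For the second inequalities, write $x\coloneq2\pi\kappa_A$ in the real case. Then
\[
r+(\lfloor x\rfloor-2)\binom r2\le r+(x-2)\frac{r(r-1)}2
= \frac{x}{2}r^2-\frac{x}{2}r+r^2-r\cdot\ldots
\]
It is cleaner to bound it as $r-(r^2-r)+\frac{x}{2}(r^2-r)=2r-r^2+\frac x2 r(r-1)\le\frac x2r^2$. This holds because $2r-r^2\le0$ for $r\ge2$ and $-\frac x2 r\le0$. Hence $n\le\pi r^2\kappa_A$. The complex case is identical with $x=16\kappa_A^2$, which gives $8r^2\kappa_A^2$.

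The only step that needs care is justifying that a $U_{2,m}$-minor really produces a rank-two representation with $m$ pairwise nonparallel nonzero columns and imbalance at most $\kappa_A$. This is exactly the content of Lemma~\ref{lem:kappaminor}: contraction keeps the imbalance, and deletion handles loops and parallel classes. Once that is in place, the rest is substitution.
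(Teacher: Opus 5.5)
Your proposal is correct and follows the paper's argument. Lemmas~\ref{lem:kappaminor} and~\ref{lem:rank2kappa} cap every simple rank-two minor at $\lfloor2\pi\kappa_A\rfloor$ (resp.\ $\lfloor16\kappa_A^2\rfloor$) elements, so Theorem~\ref{thm:introminor} applies with $\ell$ one larger, and your arithmetic for the second inequalities is right. The only blemish is the abandoned display ending in $r^2-r\cdot\ldots$: it has a sign error (the correct expansion is $\frac x2r^2-\frac x2r-r^2+2r$) and should simply be deleted.
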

\begin{proof}
Lemmas~\ref{lem:kappaminor} and~\ref{lem:rank2kappa} bound the size
of every simple rank-two minor. Apply Theorem~\ref{thm:introminor}.
\end{proof}

Theorem~1.1 of \cite{DEPRS} gives $n\le\pi r^4\kappa_A$ over
$\R$, and Corollary~1.6 gives $O(r^4\kappa_A^2)$ over $\C$;
these statements use row dimension at least four. The estimates above replace the
quartic rank dependence by quadratic dependence uniformly in both
parameters. This asymptotic improvement comes from Theorem~\ref{thm:introminor}.

\section{Ordinary lines and the conjecture of Basit et al.}\label{sec:ordinary}

Basit--Dvir--Saraf--Wolf \cite[after Theorem~1.10]{BDSW} ask whether
a quadratic number of ordinary lines follows when no affine plane
contains more than $n/2$ points. This is an unnumbered conjectural
statement in the corrected version of their paper. Their quadratic
result assumes nonconcentration in three-dimensional affine spaces;
their three-dimensional results give linear bounds and refinements
involving longer lines. De Zeeuw \cite{deZeeuw} proves the
plane-nonconcentration statement over $\R$, for every fixed plane
fraction less than one, using projection and Beck's theorem. We prove
the complex statement by supplementing the second-moment estimate of
Section~\ref{sec:frames} with a fourth-moment bound.

\subsection{The second-moment threshold and proof idea}

The second-moment estimate applied in Section~\ref{sec:common} cannot give a quadratic lower bound on the number of ordinary lines in affine dimension three, which corresponds to homogeneous rank four.
To see the threshold,
consider an isotropically normalized configuration of homogeneous
rank $D$, so $S=(n/D)I_D$. Its total squared correlation is
$E=\tr S^2-n=n^2/D-n$. 
On the other hand, the rank-two estimate \eqref{eq:localenergy} shows that the average squared correlation contributed by each ordered pair of points in a nonordinary line is at least $1/4$.
If $1/4 > 1/D$, there must be quadratically many ordinary pairs, because if every pair of points was in a nonordinary line, then the local contribution from nonordinary lines would be too large.
To bypass this problem, we use a centered fourth-moment estimate.

We work directly with the homogeneous vectors
$v_i\coloneq(p_i,1)\in\C^4$, without taking a quotient. Distinct points
give nonparallel vectors, and an invertible linear transformation
followed by normalization preserves this property. For each original spanned line $L$, let
$I_L\coloneq\{i:p_i\in L\}$. These sets form the edges of a linear
hypergraph $\mathcal H$ on $[n]$, in which every pair of distinct
vertices belongs to exactly one edge. The vectors indexed by each
edge span a subspace of dimension exactly two. For the resulting
unit vectors, put $S\coloneq\sum_i u_i u_i^*$ and define
\begin{align*}
X&\coloneq\sum_{i\ne j}\left(|\langle u_i,u_j\rangle|^2-\tfrac14\right)
=\tr S^2-n-\frac{n(n-1)}4,\\
Y&\coloneq\sum_{i\ne j}\left(|\langle u_i,u_j\rangle|^2-\tfrac14\right)^2.
\end{align*}

We first prove a global estimate that gives a quadratic lower bound on $Y$.
Then, we prove a local estimate that implies that, in the isotropic case, this lower bound can't be accounted for by pairs of points in nonordinary lines.
An easy counting argument shows that there must be many ordinary lines if a single plane contains at least $7n/12$ points.
Otherwise, if no line contains more than half of the points, we obtain a near-isotropic scaling of the points using Lemma~\ref{lem:frame}.
If some line contains more than $n/2$ (but at most $7n/12$) points, then local accounting on the single heavy line more than compensates for the fact that our scaling isn't quite isotropic.

\subsection{Global and local fourth-moment bounds}

We prove a global fourth moment estimate by applying Cauchy--Schwarz to get a second moment estimate for the centered projectors $u_i u_i^*-I_4/4$.

\begin{lemma}[Centered projectors in $\C^4$]\label{lem:projectors}
For unit vectors $u_1,\ldots,u_n\in\C^4$, the quantity $Y$
defined above satisfies
\begin{equation}\label{eq:projectorfourth}
Y\ge\frac{3n^2}{80}-\frac{9n}{16}.
\end{equation}
\end{lemma}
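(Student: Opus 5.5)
The plan is to realize the centered quantities $|\langle u_i,u_j\rangle|^2-\tfrac14$ as Hilbert--Schmidt inner products of centered projectors, and then to apply the same Cauchy--Schwarz rank argument as in the proof of Lemma~\ref{lem:moments}, now to a Gram matrix whose rank is at most the dimension of the space of traceless Hermitian $4\times4$ matrices.

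First, put $P_i\coloneq u_iu_i^*-\tfrac14 I_4$ for each $i$, viewed in the real vector space $\mathcal V$ of traceless Hermitian $4\times4$ matrices with inner product $\langle A,B\rangle\coloneq\tr(AB)$. This space has real dimension $16-1=15$. Expanding the product and using $\tr(u_iu_i^*u_ju_j^*)=|\langle u_i,u_j\rangle|^2$ and $\tr(u_iu_i^*)=1$ gives
\[
\tr(P_iP_j)=|\langle u_i,u_j\rangle|^2-\tfrac14-\tfrac14+\tfrac{4}{16}
=|\langle u_i,u_j\rangle|^2-\tfrac14 .
\]
For $i=j$ this equals $\tfrac34$. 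Thus $Y=\sum_{i\ne j}\tr(P_iP_j)^2$, and $X$ is likewise the off-diagonal sum of the $\tr(P_iP_j)$.

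Next, let $G$ be the real Gram matrix $G_{ij}\coloneq\tr(P_iP_j)$. It is positive semidefinite and factors through $\mathcal V$, so $\rank G\le15$. Its trace is $\tr G=\tfrac34 n$. As in the proof of Lemma~\ref{lem:moments}, Cauchy--Schwarz on the at most $15$ nonzero eigenvalues gives
\[
\sum_{i,j}G_{ij}^2=\tr(G^2)\ge\frac{(\tr G)^2}{15}=\frac{9n^2}{16\cdot15}=\frac{3n^2}{80}.
\]
Subtracting the diagonal contribution $n\cdot(3/4)^2=9n/16$ yields $Y\ge 3n^2/80-9n/16$, which is \eqref{eq:projectorfourth}.

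I do not expect a real obstacle. The one point needing care is the trace identity for $\tr(P_iP_j)$: the centering constant $\tfrac14 I_4$ must be exactly the one that makes the $P_i$ traceless. That choice is what makes the relevant dimension $15$ rather than $16$, and it produces precisely the constant $3/80$.
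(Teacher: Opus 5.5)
Your proposal is correct and is essentially the paper's proof: centered projectors $u_iu_i^*-\tfrac14 I_4$ in the $15$-dimensional real space of traceless Hermitian matrices, a positive semidefinite Gram matrix of rank at most $15$ and trace $3n/4$, Cauchy--Schwarz on its nonzero eigenvalues, and removal of the $n$ diagonal terms $(3/4)^2$. Your explicit expansion of $\tr(P_iP_j)$ is a helpful verification that the paper only states.
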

\begin{proof}
The matrices $Q_i\coloneq u_i u_i^*-I_4/4$ lie in the real inner-product
space of traceless Hermitian $4\times4$ matrices, with inner product
$\langle A,B\rangle\coloneq\tr(AB)$.
A Hermitian $4\times4$ matrix has four real diagonal entries and six
freely chosen complex entries above the diagonal, giving $16$ real
parameters. The trace-zero condition imposes one independent real
linear constraint, so this space has real dimension $15$.
The Gram matrix $K$ of the $Q_i$ is real positive semidefinite, with
\[
K_{ij}=\tr(Q_iQ_j)=|\langle u_i,u_j\rangle|^2-\tfrac14,
\qquad K_{ii}=\tfrac34.
\]
Thus $\rank K\le15$ and $\tr K=3n/4$. Cauchy--Schwarz applied
to the nonzero eigenvalues of $K$ gives
$\tr K^2\ge(3n/4)^2/15=3n^2/80$.
Removing the $n$ diagonal contributions $(3/4)^2$ proves the bound.
\end{proof}

\begin{lemma}[Fourth moments for linear hypergraphs]\label{lem:fourth}
Let $u_1,\ldots,u_n\in\C^4$ be unit vectors, and let $\mathcal H$
be a linear hypergraph on $[n]$ such that every pair of distinct
vertices belongs to exactly one edge and
$\rank(\{u_i:i\in I\})=2$ for every $I\in\mathcal H$.
Let $b$ be the number of edges of size two, and put
\[
R\coloneq\frac38\sum_{\substack{I\in\mathcal H\\|I|\ge4}}|I|(|I|-4).
\]
Then
\begin{equation}\label{eq:fourth}
Y+R\le\frac{11}{4}X+\frac32b.
\end{equation}
\end{lemma}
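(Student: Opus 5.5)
The plan is to localize. Every ordered pair of distinct indices lies in exactly one edge of $\mathcal H$, so $X$ and $Y$ split as sums over edges:
\[
X=\sum_{I\in\mathcal H}X_I,\qquad Y=\sum_{I\in\mathcal H}Y_I,
\]
where $X_I$ and $Y_I$ are the sums of $x_{ij}\coloneq|\langle u_i,u_j\rangle|^2-\tfrac14$ and of $x_{ij}^2$ over ordered pairs $i\ne j$ in $I$. Since $R$ and $b$ are also sums of per-edge terms, it suffices to prove, for every edge $I$ of size $s$,
\[
Y_I+\tfrac38 s(s-4)\mathrm 1_{\{s\ge4\}}\le\tfrac{11}4X_I+\tfrac32\mathrm 1_{\{s=2\}}.
\]

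For $s=2$ there is a single value $x=x_{12}=x_{21}\in[-\tfrac14,\tfrac34]$. The inequality becomes $2x^2\le\tfrac{11}{2}x+\tfrac32$, that is, $(x-3)(x+\tfrac14)\le0$. This holds on $[-\tfrac14,3]$, with equality exactly at $x=-\tfrac14$, which is an orthogonal pair.

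For $s\ge3$, the vectors $u_i$, $i\in I$, lie in a two-dimensional space, which I identify with $\C^2$. I would use the Bloch-sphere map $q$ from the proof of Lemma~\ref{lem:rank2kappa}, which sends unit vectors to points of $S^2$. It satisfies $|\langle u_i,u_j\rangle|^2=\tfrac12(1+a_{ij})$, where $a_{ij}\coloneq q(u_i)\cdot q(u_j)\in[-1,1]$. Then
\[
x_{ij}=\tfrac14+\tfrac12a_{ij},
\]
and the edge inequality becomes
\[
\sum_{i\ne j}f(a_{ij})\le-\tfrac38s(s-4)\mathrm 1_{\{s\ge4\}},\qquad f(a)\coloneq\tfrac14a^2-\tfrac98a-\tfrac58.
\]
Two facts drive the argument. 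First, $f$ is convex, so on $[-1,1]$ it lies below its chord: $f(a)\le\tfrac34-\tfrac98(a+1)=-\tfrac38-\tfrac98a$. Second, $\sum_{i\ne j}a_{ij}=\bigl\|\sum_i q(u_i)\bigr\|^2-s\ge-s$. Combining them gives
\[
\sum_{i\ne j}f(a_{ij})\le-\tfrac38s(s-1)+\tfrac98s=-\tfrac38s(s-4),
\]
which is exactly the required bound for $s\ge4$. The constant $\tfrac38$ in $R$ is presumably chosen to match this.

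The main obstacle is $s=3$. There the chord bound only gives $\sum f\le\tfrac98$, but we need $\sum f\le0$. Equality should occur at the equilateral configuration, with the three $q(u_i)$ on a great circle at angles $120^\circ$, where every $a_{ij}=-\tfrac12$ and $f(-\tfrac12)=0$. To handle this case I would use the actual constraints on the Gram matrix of $q_1,q_2,q_3$. Writing $a,b,c$ for the three pairwise products and $\sigma\coloneq a+b+c$, these are $\sigma\ge-\tfrac32$ (from $\|\sum q_i\|^2\ge0$) and $a^2+b^2+c^2\le1+2abc$ (from the Gram determinant being nonnegative). The goal is to show that $g\coloneq f(a)+f(b)+f(c)\le0$ on this feasible set.

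My first attempt would use convexity of $g$ on the compact convex set of $3\times3$ correlation matrices: the maximum of $g$ is attained at an extreme point, which is a rank-one or suitably degenerate correlation matrix. I would then check the rank-one cases (entries $\pm1$, where $g\le0$ by direct evaluation) and the boundary curve $\sigma=-\tfrac32$, which forces the equilateral configuration. If that is awkward, the fallback is a Lagrange-multiplier analysis of $g$ on the boundary $\det=0$, which corresponds to coplanar $q_i$ with angles $\theta_1,\theta_2$ and $a=\cos\theta_1$, $b=\cos\theta_2$, $c=\cos(\theta_1+\theta_2)$, reducing the claim to a two-variable trigonometric inequality.

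Once the per-edge inequality holds for all sizes, summing over $I\in\mathcal H$ gives \eqref{eq:fourth}.
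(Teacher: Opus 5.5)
Your cases $|I|=2$ and $|I|\ge4$ are correct. They are the paper's argument rewritten in Bloch coordinates: the chord bound for $f$ is the paper's inequality $(x-\tfrac14)^2\le\tfrac12(x-\tfrac14)+\tfrac3{16}$, and $\sum_{i\ne j}a_{ij}\ge-s$ is the trace--rank bound $X_I\ge|I|(|I|-3)/4$. Your feasible set for $|I|=3$ is also right. Indeed $2(1+a)(1+b)(1+c)-(1+\sigma)^2=1+2abc-a^2-b^2-c^2$, so your determinant condition is exactly the paper's constraint $(1-s)^2\le4xyz$, where the paper's $s$ is $x+y+z$.

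The gap is that the case $|I|=3$, the only delicate case, is never proved. Your first route fails as described. The condition $\sigma=-\tfrac32$ cuts out a single point of the elliptope, not a curve. Moreover, the extreme points are not just the four rank-one matrices and that point: every rank-two correlation matrix with off-diagonal entries in $(-1,1)$ is also extreme. Convexity of $g$ therefore only reduces the problem to the whole two-parameter family of coplanar configurations, and checking the vertices and the equilateral point says nothing about the rest of that family. There is also no slack to exploit: $g=0$ at the vertex $(1,-1,-1)$, which is the paper's extremal triple $(e_1,e_1,e_2)$, as well as at the equilateral point. Your fallback correctly arrives at the inequality $f(\cos\theta_1)+f(\cos\theta_2)+f(\cos(\theta_1+\theta_2))\le0$ but stops there. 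Proving this is exactly what the paper does with its Lagrange-multiplier analysis, which reduces to two equal coordinates and the factorization $(1-r)(r+2)(2r-1)^2\ge0$.

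Your framework can be finished quickly. Write the coplanar $q_i$ as unit complex numbers $w_j=e^{\mathrm{i}\alpha_j}$ and put $Z_k=\sum_j w_j^k$. Using $\cos^2\theta=\tfrac12(1+\cos2\theta)$ and $|Z_k|^2=3+2\sum_{i<j}\cos(k(\alpha_i-\alpha_j))$, one gets
\[
f(a)+f(b)+f(c)=\tfrac1{16}\bigl(|Z_2|^2-9|Z_1|^2\bigr).
\]
Let $\tau=w_1w_2+w_2w_3+w_3w_1=w_1w_2w_3\overline{Z_1}$, so that $|\tau|=|Z_1|$. Then $|Z_2|=|Z_1^2-2\tau|\le|Z_1|^2+2|Z_1|\le3|Z_1|$ when $|Z_1|\le1$. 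When $|Z_1|>1$, trivially $|Z_2|\le3\le3|Z_1|$. Combine this with your reduction to rank at most two: a continuous convex function on the compact elliptope attains its maximum at an extreme point, and extreme points are singular. That would close the case $|I|=3$. Without such an argument, the proposal does not prove the lemma.
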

\begin{proof}
For $I\in\mathcal H$, write
\begin{align*}
X_I&\coloneq\sum_{\substack{i,j\in I\\i\ne j}}
\left(|\langle u_i,u_j\rangle|^2-\tfrac14\right),\\
Y_I&\coloneq\sum_{\substack{i,j\in I\\i\ne j}}
\left(|\langle u_i,u_j\rangle|^2-\tfrac14\right)^2.
\end{align*}

If $|I|=2$, relabel its indices as $1,2$ and put $x\coloneq|\langle u_1,u_2\rangle|^2$.
Since $0\le x\le1$,
\begin{equation}\label{eq:local2}
Y_I-\frac{11}{4}X_I
=2x^2-\frac{13}{2}x+\frac32
\le\frac32.
\end{equation}

For $|I|\ge4$, the trace--rank bound \eqref{eq:localenergy} gives
$X_I\ge |I|(|I|-3)/4$. The inequality
\[
(x-\tfrac14)^2
\le\frac12(x-\tfrac14)+\frac3{16}
\qquad(0\le x\le1)
\]
gives $Y_I\le X_I/2+3|I|(|I|-1)/16$, and hence
\begin{equation}\label{eq:local4}
\frac{11}{4}X_I-Y_I
\ge\frac94X_I-\frac{3|I|(|I|-1)}{16}
\ge\frac38|I|(|I|-4).
\end{equation}

It remains to prove that, for $|I|=3$,
\begin{equation}\label{eq:local3}
Y_I\le\frac{11}{4}X_I.
\end{equation}
Relabel the indices as $1,2,3$ and put
\begin{gather*}
x\coloneq|\langle u_1,u_2\rangle|^2,\qquad
y\coloneq|\langle u_1,u_3\rangle|^2,\qquad
z\coloneq|\langle u_2,u_3\rangle|^2,\\
s\coloneq x+y+z,\qquad q\coloneq x^2+y^2+z^2.
\end{gather*}
The trace--rank bound \eqref{eq:localenergy} gives $s\ge3/4$.
The $3\times3$ Gram matrix has determinant zero because the vectors
indexed by this edge span a two-dimensional subspace. Its expansion gives
\[
1-s=-2\operatorname{Re}\bigl(\langle u_1,u_2\rangle
\langle u_2,u_3\rangle\langle u_3,u_1\rangle\bigr),
\]
and hence
\[
(1-s)^2\le4xyz.
\]
The desired inequality is equivalent to $13s-4q\ge9$.
If $s\ge1$, this follows from $q\le s$.

Fix $3/4\le s<1$ and maximize $q$ over nonnegative triples with
sum $s$ and $xyz\ge(1-s)^2/4$. All coordinates are positive.
At a maximizer the product constraint is an equality: otherwise
a small perturbation $(x,y)\mapsto(x+\varepsilon,y-\varepsilon)$,
with a suitable sign, increases $q$ while preserving feasibility.
Unless $x=y=z$, the constraint gradients are independent, and the
Lagrange multiplier equations give
\[
2x^2=\lambda x+\mu xyz,
\]
with analogous equations for $y,z$. Thus $x,y,z$ are roots of one
quadratic, so two coordinates are equal. This also holds when all
three are equal.

After relabeling, write $x=r^2$ and $y=z$, where $0<r<1$.
Since $s<1$, the product equality gives
\[
1-r^2-2y=2ry,
\qquad\text{so}\qquad y=z=\frac{1-r}{2}.
\]
Consequently,
\[
13s-4q-9=(1-r)(r+2)(2r-1)^2\ge0.
\]
Summing these estimates over the edges proves the lemma, since
each pair of distinct vertices belongs to exactly one edge.
\end{proof}

\subsection{Ordinary-line estimates and proof of the theorem}

\begin{lemma}[Ordinary lines crossing a flat]\label{lem:crossing}
If a flat contains $m$ of the $n$ points, then at least
\[
(n-m)\max\{0,2m-n+1\}
\]
ordinary lines have one point in the flat and one outside it.
\end{lemma}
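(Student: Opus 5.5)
Let $F$ be the flat, $A\coloneq P\cap F$ with $|A|=m$, and $B\coloneq P\setminus F$ with $|B|=n-m$. If $2m-n+1\le0$ there is nothing to prove, so assume $2m\ge n$. The plan is a double count: fix a point $b\in B$ and count the ordinary lines joining $b$ to a point of $A$, then sum over $b$. Each such line has exactly one endpoint outside $F$, so it is counted exactly once. It therefore suffices to show that every $b\in B$ lies on at least $2m-n+1$ ordinary lines meeting $A$.

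Fix $b\in B$. The key geometric fact is that a line through $b\notin F$ meets the flat $F$ in at most one point, since otherwise the line would lie in $F$. Hence the $m$ lines $bp$, for $p\in A$, are pairwise distinct, and each contains exactly one point of $A$. Call $p\in A$ \emph{bad} if the line $bp$ is not ordinary. In that case $bp$ contains a third point of $P$. That point is not in $A$, because the line meets $F$ only at $p$, and it is not $b$; so it is some point of $B\setminus\{b\}$. Because the lines $bp$ are distinct for distinct $p$ and any two of them meet only at $b$, distinct bad points $p$ receive distinct witnesses in $B\setminus\{b\}$. Thus the number of bad points is at most $|B|-1=n-m-1$.

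It follows that $b$ is joined to at least $m-(n-m-1)=2m-n+1$ points of $A$ by ordinary lines. Summing over the $n-m$ points of $B$ gives the bound in Lemma~\ref{lem:crossing}. No step is a real obstacle. The only point needing care is the injectivity of the witness map, and that follows directly from the fact that two lines through $b$ meet only at $b$.
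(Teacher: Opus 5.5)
Your proposal is correct and follows essentially the same argument as the paper. The paper also fixes an outside point, notes that its lines to the $m$ inside points are distinct, and assigns distinct outside witnesses to the nonordinary ones to get at most $n-m-1$ of them. It then sums over outside points, and each ordinary crossing line is counted exactly once.
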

\begin{proof}
Fix an outside point $q$. Its lines to the $m$ inside points are
distinct, because a line not contained in a flat meets it in at most
one point. Each nonordinary such line contains another outside
point, and different lines use different such witnesses. At most
$n-m-1$ lines are therefore nonordinary. Sum over $q$; an ordinary
crossing line is counted exactly once.
\end{proof}

Under the hypotheses of Theorem~\ref{thm:introordinary}, a plane containing
$m>3n/4$ points therefore gives
\[
t_2\ge(n-m)(2m-n+1)\ge\frac{1-\alpha}{2}n^2.
\]
If instead $7n/12<m\le3n/4$, put $x\coloneq m/n$. Then
\[
t_2\ge(1-x)(2x-1)n^2
\ge\frac5{72}n^2>\frac{n^2}{40},
\]
since the concave quadratic $(1-x)(2x-1)$ has minimum $5/72$
on $[7/12,3/4]$. Thus any plane containing more than $7n/12$
points already gives the conclusion of the theorem.

We now restate Theorem~\ref{thm:introordinary} and complete its proof.

\begin{theorem}[Ordinary lines under plane nonconcentration]\label{thm:ordinary}
Let $P\subset\C^3$ consist of $n\ge1$ distinct points, and suppose that
every affine plane contains at most $\alpha n$ of them, where
$0\le\alpha<1$. If $t_2(P)$ is the number of ordinary lines, then
\begin{equation}\label{eq:ordinarymain}
t_2(P)\ge
\min\left\{\frac1{40},\frac{1-\alpha}{2}\right\}n^2.
\end{equation}
\end{theorem}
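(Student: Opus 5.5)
The plan is to split according to the largest plane and the largest line. The large-plane case is already handled by Lemma~\ref{lem:crossing}. In the remaining cases we homogenize to $\C^4$, normalize by Lemma~\ref{lem:frame} with a suitable $T$, and compare the global bound of Lemma~\ref{lem:projectors} with the local bound of Lemma~\ref{lem:fourth}.

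\emph{Plane case and setup.} If some plane contains $m>7n/12$ points, the discussion after Lemma~\ref{lem:crossing} gives $t_2\ge\min\{5/72,(1-\alpha)/2\}n^2$, and we are done. So assume every plane contains at most $7n/12$ points. Since $\alpha<1$, the set $P$ is not coplanar, so $P$ has affine dimension three and the lifts $v_i=(p_i,1)$ span $\C^4$. Let $m$ be the size of a largest spanned line. Any line together with an outside point spans a plane, so $m\le 7n/12$. Take $T\coloneq\max\{n/4,m/2\}$ and check the density condition \eqref{eq:density} of Lemma~\ref{lem:frame} dimension by dimension:
\begin{itemize}
\item[] one-dimensional subspaces contain one vector, since distinct points give nonparallel lifts;
\item[] two-dimensional subspaces contain at most $\max\{m,\,n-m+1\}\le 2T$ vectors, because two lines share at most one point;
\item[] three-dimensional subspaces contain at most $7n/12\le 3T$ vectors, since $T\ge n/4$.
\end{itemize}
Lemma~\ref{lem:frame} then gives unit vectors with $S\preceq(T+\eta)I_4$. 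Hence $\tr S^2\le (T+\eta)n$, and therefore
\[X\le (T-1)n-\frac{n(n-1)}4+\eta n.\]
Apply Lemma~\ref{lem:fourth} to the hypergraph of all spanned lines, with $b=t_2$, and Lemma~\ref{lem:projectors} to obtain
\[\frac32 t_2\ \ge\ Y+R-\frac{11}4X\ \ge\ \frac{3n^2}{80}-\frac{9n}{16}+R-\frac{11}{4}X .\]

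\emph{Case $m\le n/2$.} Here $T=n/4$, so $X\le -3n/4+\eta n$. The bound becomes $\tfrac32t_2\ge 3n^2/80+3n/2-O(\eta n)$. Letting $\eta\downarrow0$ gives $t_2\ge n^2/40+n$.

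\emph{Case $n/2<m\le7n/12$.} Here $T=m/2$, so $X\le n(2m-n)/4-3n/4+\eta n$. This larger $X$ is compensated by the heavy line's contribution $R\ge\tfrac38m(m-4)$. Writing $x=m/n$, the quadratic part of the lower bound for $\tfrac32t_2$ has coefficient
\[\frac3{80}+\frac38x^2-\frac{11}8x+\frac{11}{16}\]
on $n^2$. This expression is decreasing on $[1/2,7/12]$ and is about $0.0506>3/80$ at $x=7/12$. The linear terms collect to $\tfrac32(n-m)\ge0$. Hence again $t_2\ge n^2/40$.

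\emph{Main obstacle.} The delicate step is this heavy-line case: the normalization is only $m/2$-isotropic, and one must check that the single term of $R$ from the heavy line outweighs the loss in $\tfrac{11}4X$ uniformly for $x\in(1/2,7/12]$. One must also confirm that every other line and plane stays within the density budget $2T$ and $3T$. The threshold $7/12$ was evidently chosen to make exactly this bookkeeping close. Degenerate small-$n$ situations are absorbed because the hypothesis $\alpha<1$ forces noncoplanarity.
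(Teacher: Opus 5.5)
Your proposal is correct and follows the paper's proof essentially step for step. It uses the crossing-line reduction to planes with at most $7n/12$ points, then Lemma~\ref{lem:frame} with $T=n/4$ or $T=m/2$ according to the largest line, combined with Lemmas~\ref{lem:projectors} and~\ref{lem:fourth}; the heavy line's term in $R$ absorbs the loss in $\frac{11}{4}X$, and your coefficient $\frac{3}{80}+\frac38x^2-\frac{11}8x+\frac{11}{16}$ is exactly $\frac32$ times the paper's bracket.

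One small slip: the claim $\max\{m,n-m+1\}\le 2T$ is false when $m\le n/2$, since then $n-m+1>n/2=2T$. It is not needed, because a two-dimensional subspace contains only lifts of points on a single affine line, hence at most $m\le 2T$ of them.
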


\begin{proof}[Proof of Theorem~\ref{thm:introordinary}]
The hypothesis implies that $P$ spans $\C^3$, so $n\ge4$.
By the crossing-line estimates above, we may assume that every plane
contains at most $7n/12$ points. Let $\ell$ be the maximum number of
collinear points. Every line lies in a plane, so $\ell\le7n/12$.

Homogenize $P$, apply an invertible linear transformation in $\C^4$,
and normalize the images to unit vectors $u_i$. Write
\[
S\coloneq\sum_i u_i u_i^*,
\qquad
X\coloneq\tr S^2-n-\frac{n(n-1)}4.
\]
The edges $I_L$ associated with the original lines form a linear
hypergraph in which every pair of distinct vertices belongs to
exactly one edge. The vectors indexed by each edge have rank exactly
two, and the edges of size two correspond precisely to the ordinary lines.
Combining \eqref{eq:projectorfourth} and \eqref{eq:fourth} therefore
gives
\[
\frac32t_2
\ge\frac{3n^2}{80}-\frac{9n}{16}-\frac{11}{4}X
+\frac38\sum_{L:\ s_L\ge4}s_L(s_L-4).
\]
Equivalently,
\begin{equation}\label{eq:ordinaryX}
t_2\ge
\frac{n^2}{40}-\frac{3n}{8}-\frac{11}{6}X
+\frac14\sum_{L:\ s_L\ge4}s_L(s_L-4).
\end{equation}

Suppose first that $\ell\le n/2$. The homogeneous vectors satisfy
\eqref{eq:density} with $T=n/4$: subspaces of dimensions $1,2,3,4$
contain at most $1,n/2,7n/12,n$ vectors, respectively.
For every $\eta>0$, Lemma~\ref{lem:frame} provides a normalization
with $S\preceq(n/4+\eta)I_4$. Since $\tr S=n$, we have
\[
X\le n(n/4+\eta)-n-\frac{n(n-1)}4
=-\frac{3n}{4}+n\eta.
\]
Substituting into \eqref{eq:ordinaryX} and letting
$\eta\downarrow0$ gives
\begin{equation}\label{eq:ordinarybalanced}
t_2\ge\frac{n^2}{40}+n
+\frac14\sum_{L:\ s_L\ge4}s_L(s_L-4).
\end{equation}

Suppose next that $n/2<\ell\le7n/12$, and write $x\coloneq\ell/n$.
Now \eqref{eq:density} holds with $T=\ell/2$.
Lemma~\ref{lem:frame} therefore provides a normalization with
$S\preceq(\ell/2+\eta)I_4$, giving
\[
X\le n(\ell/2+\eta)-n-\frac{n(n-1)}4
=\frac12(x-\tfrac12)n^2-\frac{3n}{4}+n\eta.
\]
Since $\sum_{s_L\ge4}s_L(s_L-4)\ge\ell(\ell-4)$, using \eqref{eq:ordinaryX} and letting $\eta\downarrow0$ gives
\[
t_2\ge
\left[
\frac1{40}-\frac{11}{12}(x-\tfrac12)+\frac{x^2}{4}
\right]n^2+(1-x)n.
\]
The bracket is decreasing on $[1/2,7/12]$, and its value at
$7/12$ is $97/2880>1/40$. Hence $t_2>n^2/40$ in this case
as well. These cases prove the theorem.
\end{proof}

\subsection{Tightness of results}

The jointless cyclic Dowling configurations $Q'_4(t)$, $t\ge3$,
give an infinite family with $t_2=n^2/12$ and no plane
containing more than $n/2$ points; see
Section~\ref{sec:constructions}.
Hence, under the plane cap $n/2$, the optimal asymptotic
coefficient in Theorem~\ref{thm:introordinary} lies in $[1/40,1/12]$.

\paragraph{A construction over $\mathbb{R}$.}
An interesting construction over $\mathbb{R}$ can be obtained by modifying $Q'_4(t)$ by replacing the cyclic group $\mathbb{Z}/t\mathbb{Z}$ by a geometric progression.
De Zeeuw \cite[Section 4.1]{deZeeuw} remarked that a set of $n$ points in $\mathbb{R}^3$ evenly distributed on two skew lines determines $n^2/4$ ordinary lines with at most $n/2+1$ points in any plane, and he was unaware of any set determining fewer ordinary lines without being heavily concentrated on a single plane.
The following construction gives a smaller leading coefficient of $1/6$.

We work with projective coordinates. For an integer $q\ge2$, define
\[
P\coloneq\bigl\{[e_i-2^a e_j]:
1\le i<j\le4,\ a\in\Z,\ -q\le a\le q\bigr\}.
\]
Put $m\coloneq 2q+1$, so $n\coloneq|P|=6m$. We count the ordinary lines by
classifying their defining pairs according to their supports.

Points with the same support lie on one of the six coordinate
lines, each containing exactly $m\ge5$ points of $P$.
None of these lines is ordinary.
Points with disjoint supports determine an ordinary line:
a linear combination using both representatives has four nonzero
coordinates and therefore cannot represent another point of $P$.
There are three partitions of the four indices into two pairs,
giving $3m^2$ ordinary lines.

For overlapping supports, fix $i<j<k$.
The three points
\[
[e_i-2^a e_j],\qquad
[e_i-2^b e_k],\qquad
[e_j-2^c e_k]
\]
are collinear exactly when $b=a+c$, since the determinant of
their representatives is $2^b-2^{a+c}$.
Thus the number of three-point lines on these indices is
\[
T\coloneq\#\bigl\{(a,c)\in\{-q,\ldots,q\}^2:
-q\le a+c\le q\bigr\}
=\sum_{a=-q}^{q}(m-|a|)
=m^2-q(q+1).
\]
There are $3m^2$ pairs from different support classes,
and each three-point line accounts for three such pairs.
The number of ordinary lines on these indices is therefore
\[
3m^2-3T=3q(q+1).
\]
The four choices of three indices contribute $12q(q+1)$
ordinary lines.

These cases exhaust all defining pairs, so the total number
of ordinary lines is
\[
t_2(P)=3m^2+12q(q+1)
=24q(q+1)+3
=\frac{n^2}{6}-3.
\]

\paragraph{Optimality of individual steps in the argument.}
Determining the exact asymptotic constant in Theorem~\ref{thm:introordinary} is an interesting problem, and obtaining a tight bound is likely to require a new idea.
Lemma~\ref{lem:projectors} is optimal and the constants $11/4$ and $3/2$ in Lemma~\ref{lem:fourth} are best possible under the stated hypotheses.

We first consider Lemma~\ref{lem:fourth}.
The triple
$(e_1,e_1,e_2)$ has $X_I=1/2$ and $Y_I=11/8$, so an inequality $Y_I \leq \lambda X_I$ forces $\lambda \geq 11/4$, as in \eqref{eq:local3}. Triples of distinct
projective directions approach this example. 
An orthogonal pair has $X_I=-1/2$ and $Y_I=1/8$, so an
inequality $Y_I\le(11/4)X_I+c$ forces $c\ge3/2$,
as in \eqref{eq:local2}.

Next, we consider Lemma~\ref{lem:projectors}.
There exist five mutually unbiased orthonormal bases
$B_1,\ldots,B_5$ of $\C^4$; see the explicit construction in
\cite[Section~4]{BBRV}. Mutual unbiasedness means that
\[
|\langle u,v\rangle|^2=\frac14
\qquad
(u\in B_a,\ v\in B_b,\ a\ne b).
\]
Let $A\coloneq B_1\cup\cdots\cup B_5$. Its twenty vectors are pairwise
nonparallel, and
\[
\sum_{u\in A}uu^*=5I_4.
\]
Distinct vectors in the same basis are orthogonal, while vectors
from different bases contribute zero to $Y$. Therefore
\[
Y=5\cdot4\cdot3\left(\frac14\right)^2
=\frac{15}{4}
=\frac{3\cdot20^2}{80}-\frac{9\cdot20}{16},
\]
which precisely matches the bound of Lemma~\ref{lem:projectors}.
Repeating each vector $k$ times gives equality for $n=20k$.

\section{Acknowledgements}
The authors used GPT-6 Astra to assist with proof development and
writing. The authors take full responsibility for the mathematical
content and exposition of the paper. 

The first author is grateful to Xidian University for their hospitality and hosting him as a visitor in September-October of 2026.

\appendix
\section{Proof of the frame operator lemma}\label{app:frame}

We prove Lemma~\ref{lem:frame} using submodularity of linear rank,
the greedy basis algorithm, and Cauchy--Binet. This proof works
directly over $\C$ and includes boundary cases.

\subsection{Extending the lower weights}

For $I\subseteq[n]$, put
$r(I)\coloneq\dim\spanop\{v_i:i\in I\}$.
For $x \in \mathbb{R}^n$, put $x(I)\coloneq\sum_{i\in I}x_i$.
The density hypothesis implies $|I|/T\le r(I)$. Consequently the
compact set
\[
\mathcal K\coloneq\{x\in\R^n:x_i\ge1/T\text{ for all }i,
\ x(I)\le r(I)\text{ for all }I\subseteq[n]\}
\]
contains $(1/T,\ldots,1/T)$, and hence is nonempty. Compactness follows from $x_i\le r(\{i\})=1$.
Choose $c\in\mathcal K$ maximizing $c([n]) = \sum_i c_i$. We claim that
$c([n])=D$.

Call a set $I$ tight if $c(I)=r(I)$. If $I,J$ are tight,
submodularity gives
\begin{align*}
r(I)+r(J)&=c(I)+c(J)=c(I\cup J)+c(I\cap J)\\
&\le r(I\cup J)+r(I\cap J)\le r(I)+r(J).
\end{align*}
Thus their union is tight. Every index belongs to a tight set:
otherwise that coordinate could be increased a little, since there
are only finitely many constraints. Taking the union shows that
$[n]$ is tight. We have proved
\begin{equation}\label{eq:baseweights}
c_i\ge1/T,\qquad c(I)\le r(I),\qquad\sum_i c_i=D.
\end{equation}

\subsection{A log-determinant function bounded below}

For $z\in\R^n$, define
\[
M(z)\coloneq\sum_i e^{z_i}v_iv_i^*,\qquad
\Phi(z)\coloneq\log\det M(z)-c\cdot z.
\]
The matrix $M(z)$ is positive definite. If $\cB$ is the family of index sets $B \subseteq [n]$ such that $(v_i)_{i \in B}$ is a basis of $\mathbb{C}^D$, then we claim that
\begin{equation}\label{eq:cauchyb}
\det M(z)=\sum_{B\in\cB}a_Be^{\sum_{i\in B}z_i},
\qquad a_B\coloneq|\det(v_i:i\in B)|^2>0.
\end{equation}

Indeed, let $W\coloneq[e^{z_1/2}v_1 \ldots e^{z_n/2}v_n]$.
Then,
\[WW^* = \sum_{i=1}^n (e^{z_i/2}v_i)(e^{z_i/2}v_i)^* = M(z). \]
Consequently, by Cauchy--Binet,
\[\det M(z) = \sum_{B \in \binom{[n]}{D}} \det(W_B) \overline{\det(W_B)} = \sum_{B \in \binom{[n]}{D}} |\det(W_B)|^2. \]
For a fixed $B$, we can factor $e^{z_i/2}$ from column $i$ to obtain
\[ \det(W_B) = \left ( \prod_{i \in B} e^{z_i/2} \right) \det(V_B) = e^{1/2 \sum_{i \in B} z_i} \det(V_B), \]
where $V_B$ has columns $v_i, i \in B$. Hence,
\[|\det(W_B)|^2 = e^{\sum_{i \in B} z_i} |\det(V_B)|^2.\]
If $B$ is not a basis, $\det(V_B) = 0$. Keeping only the nonzero terms gives \eqref{eq:cauchyb}.

For every $z$, there exists a basis $B \in \cB$ that satisfies $\sum_{i\in B}z_i\ge c\cdot z$.
To verify this, relabel the indices, including the corresponding $v_i$ and $c_i$, so that $z_1\ge\cdots\ge z_n$, and take
the greedy basis in this order. For $I_j\coloneq\{1,\ldots,j\}$ it has
$|B\cap I_j|=r(I_j)$. Summation by parts gives
\begin{align*}
c\cdot z
&=Dz_n+\sum_{j<n}(z_j-z_{j+1})c(I_j)\\
&\le Dz_n+\sum_{j<n}(z_j-z_{j+1})r(I_j)
=\sum_{i\in B}z_i.
\end{align*}
Equation~\eqref{eq:cauchyb} therefore implies
\[
\Phi(z)\ge L\coloneq \min_{B\in\cB}\log a_B> -\infty.
\]

For $\rho>0$, let $z_\rho$ minimize
$\Phi(z)+\rho\|z\|^2/2$. The function is continuous and coercive,
so a minimizer exists. Its first-order condition and comparison
with $z=0$ give
\[
\nabla\Phi(z_\rho)=-\rho z_\rho,\qquad
\frac\rho2\|z_\rho\|^2\le\Phi(0)-L.
\]
Hence $\|\nabla\Phi(z_\rho)\|\le
\sqrt{2\rho(\Phi(0)-L)}\to0$ as $\rho\downarrow0$.
This argument does not require that $\Phi$ itself attain its infimum.

\subsection{From a small gradient to the frame bound}

Differentiating the log determinant, we have
\[
g_i\coloneq \partial_i\Phi(z)=e^{z_i}v_i^*M(z)^{-1}v_i-c_i.
\]
Choose $z$ so that $|g_i|<\delta$ for all $i$, with
$0<\delta<1/T$. Set $A\coloneq M(z)^{-1/2}$ and
$u_i\coloneq Av_i/\|Av_i\|$. Then
\[
I_D=AM(z)A^*
=\sum_i e^{z_i}\|Av_i\|^2u_iu_i^*
=\sum_i(c_i+g_i)u_iu_i^*
\succeq(1/T-\delta)\sum_i u_iu_i^*.
\]
Thus $S\preceq(1/T-\delta)^{-1}I_D$. Choose $\delta$ sufficiently
small that $(1/T-\delta)^{-1}\le T+\eta$. This proves the lemma.
\hfill$\square$

\phantomsection
\addcontentsline{toc}{section}{References}
\begingroup
\small
\setlength{\parskip}{0pt}
 \bibliography{frame_methods_references}
 \bibliographystyle{plain}

\endgroup
\end{document}